\newcommand{\BB}{\mathbb{B}}
\newcommand{\CC}{\mathbb{C}}
\newcommand{\DD}{\mathbb{D}}
\newcommand{\EE}{\mathbb{E}}
\newcommand{\FF}{\mathbb{F}}
\newcommand{\II}{\mathbb{I}}
\newcommand{\KK}{\mathbb{K}}
\newcommand{\MM}{\mathbb{M}}
\newcommand{\NN}{\mathbb{N}}
\newcommand{\PP}{\mathbb{P}}
\newcommand{\QQ}{\mathbb{Q}}
\newcommand{\RR}{\mathbb{R}}
\newcommand{\SSS}{\mathbb{S}}
\newcommand{\UU}{\mathbb{U}}
\newcommand{\VV}{\mathbb{V}}
\newcommand{\WW}{\mathbb{W}}
\newcommand{\XX}{\mathbb{X}}
\newcommand{\YY}{\mathbb{Y}}
\newcommand{\aA}{\mathcal{A}}
\newcommand{\bB}{\mathcal{B}}
\newcommand{\cC}{\mathcal{C}}
\newcommand{\dD}{\mathcal{D}}
\newcommand{\eE}{\mathcal{E}}
\newcommand{\fF}{\mathcal{F}}
\newcommand{\gG}{\mathcal{G}}
\newcommand{\lL}{\mathcal{L}}
\newcommand{\pP}{\mathcal{P}}
\newcommand{\rR}{\mathcal{R}}
\newcommand{\sS}{\mathcal{S}}
\newcommand{\uU}{\mathcal{U}}
\newcommand{\xX}{\mathcal{X}}
\newcommand{\yY}{\mathcal{Y}}
\newcommand{\zZ}{\mathcal{Z}}
\newcommand{\no}{\noindent}
\newcommand{\ra}{\rightarrow}
\newcommand{\ds}{\displaystyle}
\newtheorem{thm}{Theorem}[section]
\theoremstyle{plain} 
\newtheorem{theorem}{Theorem}[section]
\newtheorem{lemma}{Lemma}[section] 
\newtheorem{proposition}{Proposition}[section] 
\theoremstyle{definition} 
\newtheorem{definition}{Definition}[section]
\newtheorem{condition}{Hypothesis}%[section]
\theoremstyle{remark} 
\newtheorem{remark}{Remark}[section]
\theoremstyle{definition}
\newtheorem{rem}[thm]{Remark}
\DeclareMathSymbol{\ophi}{\mathalpha}{letters}{"1E}
\renewcommand{\phi}{\varphi}
\newcommand{\be}{\begin{equation}}
\newcommand{\ee}{\end{equation}}
\newcommand{\ben}{\begin{equation*}}
\newcommand{\een}{\end{equation*}}
\newcommand{\ba}{\begin{equation}\begin{aligned}}
\newcommand{\ea}{\end{aligned}\end{equation}}
\newfont{\cyrfnt}{wncyr10}
\def\J3{\cyrfnt{\rm \u{\cyrfnt I}}}
\def\j3{\cyrfnt{\rm \u{\cyrfnt i}}}
\definecolor{DarkGreen}{rgb}{0.1,0.7,0.3}   %define a custom color
\newcommand{\cb}[1]{\textcolor{blue}{#1}}
\definecolor{DarkGreen}{rgb}{0.1,0.7,0.3}   %define a custom color
\begin{document}
\title{Moderate averaged deviations for a multi-scale system with jumps and memory
}

\date{\null}

\author{
Pedro Catuogno \footnote{Departamento de Matem\'{a}tica Universidade Estadual de Campinas 13081-970 Campinas SP-Brazil; 
pedrojc@unicamp.br
} \quad \quad \quad \quad \quad 
Andr\'e de Oliveira Gomes \footnote{Departamento de Matem\'{a}tica Universidade Estadual de Campinas 13081-970 Campinas SP-Brazil; ENSTA-ParisTech Applied Mathematics Department,
828 Boulevard des Maréchaux, 91120 Palaiseau, France; andre.deoliveiragomes@cardis.io}  \hspace{2cm}
}

\maketitle

%\tableofcontents

%\newpage

\begin{abstract}
This work studies a two-time-scale functional system given by two jump-diffusions under the scale separation by a small parameter $\varepsilon \ra 0$. The coefficients of the equations that govern the dynamics of the system depend on the segment process of the slow variable (responsible for capturing delay effects on the slow component) and on the state of the fast variable. We derive a moderate deviations principle for the slow component of the system in the small noise limit using the weak convergence approach. The rate function is written in terms of the averaged dynamics associated to the multi-scale system. The core of the proof of the moderate deviations principle is the establishment of an averaging principle for the auxiliary controlled processes associated to the slow variable in the framework of the weak convergence approach. The controlled version of the averaging principle for the jump multi-scale diffusion relies on some discretization method inspired by the classical Khasminkii's averaging principle.
\end{abstract}
\noindent \textbf{Keywords:}  Moderate deviations principle; multi-scale stochastic differential equations with jumps and delay; segment process; stochastic averaging principle; weak convergence approach;\\
\noindent \textbf{2010 Mathematical Subject Classification: } 60H10; 60F10; 60J75;

%\newpage
%\marginpar{\cb{changes in blue after the referees}}
 \section{Introduction}
  \no Fixed a terminal time $T>0$ and a certain delay $\tau>0$ we consider in the small noise limit $\varepsilon \ra 0$ the two-time scale stochastic system given for any $t \in [0,T]$ by %\marginpar{\cb{point3R1}}
\begin{align} \label{eq: the multiscale system int} 
\begin{cases}
dX^\varepsilon(t)&= a(X^\varepsilon_t,Y^\varepsilon(t))dt + \sqrt{\varepsilon} \sigma(X^\varepsilon_t)dB^1(t) + \displaystyle \int_{\XX }\cb{\varepsilon} c(X^\varepsilon_{t-},z) \tilde N^{\frac{1}{\varepsilon}}(t,dz);  \\
d Y^\varepsilon(t)&= \displaystyle \frac{1}{\varepsilon} f(X^\varepsilon_t, Y^\varepsilon(t))dt + \frac{1}{\sqrt{\varepsilon}} g(X^\varepsilon_t, Y^\varepsilon(t)) dB^2(t) + \displaystyle \int_\XX h(X^\varepsilon_{t-}, Y^\varepsilon(t-),z) \tilde N^{\frac{1}{\varepsilon}}(t,dz).
\end{cases}
\end{align}

\no For every $\varepsilon>0$ the stochastic process $(X^\varepsilon(t), Y^\varepsilon(t))_{t \in [0,T]}$ takes values in $\RR^n:=\RR^d \times \RR^k$. The initial datum is $(X^\varepsilon_0, Y^\varepsilon(0))=(\chi,y)$ where $\chi$ is a given continuous function from $[-\tau,0]$ to $\RR^d$ (initial delay segment) and $y \in \RR^k$.  The processes $X^\varepsilon$ and $Y^\varepsilon$ are denominated the slow variable and respectively the fast variable of the multi-scale stochastic system (\ref{eq: the multiscale system int}). We stress that we use the notation $X^\varepsilon_t$ for the segment process, i.e. $X^\varepsilon_t := \{ X^\varepsilon(t+\theta) ~|~\theta \in [-\tau,0]\}$ for any $t \geq 0$. We refer the reader to Chapters 5 and 6 of the book \cite{Mao book} for an introduction to the subject of stochastic functional differential equations with Brownian noise and to \cite{di Nunno et al} for the study of stochastic functional differential equations with jumps. The space of the jump increments $\XX$ is Euclidean, the process $B=(B^1,B^2)$ is a standard Brownian motion (BM for short) with values in $\RR^n$ with first component $B^1$ a standard BM with values in $\RR^d$ and second component $B^2$ an independent $\RR^k$-valued standard BM. For every $\varepsilon>0$ the random measure $\tilde N^{\frac{1}{\varepsilon}}$ is an independent compensated Poisson random measure with intensity given by $ds \otimes \frac{1}{\varepsilon} \nu(dz)$, where $ds$ stands for the Lebesgue measure on the real line and $\nu$ is a L\'{e}vy measure on $\XX$. In this work we consider $\nu$ possibly with infinite total mass but satisfying an exponential integrability condition that reads as the big jumps of the underlying L\'{e}vy process having exponential moments of order 2. The assumptions on the coefficients of (\ref{eq: the multiscale system int}) and on the measure $\nu$ will be precised with full rigour in the following section. \\

 \no Multi-scale stochastic systems as (\ref{eq: the multiscale system int}) are nowadays very popular in applied mathematical and physical disciplines since they are sucessful models for phenomena exhibiting different levels of heterogeneity/homogeneity that can be asymptotically categorized by scaling. This technique of understanding diversity exploits the decomposition of the phase space of the model in two sets of variables, the ones with slow degrees of freedom and the ones with fast degrees of freedom through a separation scale given by an intensity parameter measuring this degree of heterogeneity/homogeneity. We refer the reader to \cite{Liu and Vanden-Eijden} and the monograph \cite{Pavliotis Stuart} for an introduction to the subject. Typical examples are multi-factor stochastic volatility models in Finance \cite{Papanicolaou1, Papanicolaou2} and the dynamics of proxy-data in Climatology \cite{Kifer} where climatic transitions are understood within the distinction between slow and fast variables that encode different factors used to build statistical parametrizations. In the description of those climatic models short/large time-scales must be taken into consideration (e.g. daily weather forecast vs climatic prediction) %\marginpar{\cb{point3R1}} 
 in order to see interesting phenomena such as metastability of the slow variable from an equilibrium state of the deterministic dynamics (cf. Appendix in \cite{Debussche}). Often in these multi-scale climatic models the slow variable quantifies data related with large time scales (e.g. climatic data). Multi-scale stochastic %\marginpar{\cb{poin4R1}} 
 systems of the type (\ref{eq: the multiscale system int}) offer the mathematical formalism necessary to capture more realistic attributes of the underlying stochastic climate model.  The paradigmatic example in climate dynamics is the coupling of ocean temperatures models (slow variable) with the atmospheric Lorentz equations (fast variable). We refer the reader for more details to \cite{Dij13}. %\marginpar{\cb{point3R1}}
  The presence of an underlying L\'{e}vy process that drives the stochastic dynamics of (\ref{eq: the multiscale system int}) in small noise models abrupt climate transitions. A typical example is given by the \textit{Daansgard-Oeschger events} that show statistical evidence of underlying jump noise signals (cf. Chapter 10 in \cite{Dij13} and \cite{Ditlevsen, Gairing et al.16, Hein et al.09}). The dependence of the coefficients of (\ref{eq: the multiscale system int}) on the the segment process of the slow variable models the memory effects exhibited by energy balance models such as the ones constucted in \cite{Dij19}. \\

\no This type of multi-scale systems are highly complex and difficult to analyse or simulate. It is desirable to approximate in a suitable sense the dynamics of the slow variable by some simpler dynamical system. The idea of the averaging principle performed first by Khasminkii in \cite{Khasminskii} is the following. Under strong dissipativity assumptions on the coefficients of the fast variable that ensure the existence of a unique invariant measure $\mu^{\zeta}$ for the fast variable process with frozen slow variable $\zeta$ and such that a certain ergodic property holds for the mixing coefficient $a$ wrt to its average against $\mu^\zeta$ (cf. Proposition \ref{proposition: the averaging property for the averaged coefficient}) %\marginpar{\cb{point5R1}}
\begin{align} \label{eq: int-the averaged coef}
\bar a(\zeta):= \int_{\RR^k} a(\zeta,y) \mu^{\zeta}(dy)
\end{align} 
the (strong) averaging principle states that for any $T>0$ and $\delta>0$ one has
\begin{align} \label{eq: int limit1}
\displaystyle \lim_{\varepsilon \ra 0} \PP \Big ( \displaystyle \sup_{t \in [0,T]} |X^\varepsilon(t)- \bar X(t)|> \delta \Big )=0,
\end{align}
where $\bar X^0$ is the unique solution of the functional averaged differential equation  %\marginpar{\cb{$\frac{d}{dt}$ was missing}}
\begin{align} \label{eq: int the averaged dynamics}
\begin{cases}
\frac{d}{dt} \bar X^0(t)&= \bar a(\bar X^0_t), \quad t \in [0,T];\\
\bar X^0_0&=\chi.
\end{cases}
\end{align}
\no The averaging principle has applications to problems in celestial (stochastic) mechanics (cf. Chapter 7 in \cite{FW98}) and climatic energy balanced models (cf. \cite{Arnold}) among others and has a rich and diverse history in the literature. Khasminkii's technique was introduced in \cite{Khasminskii} and later implemented by Mark Freidlin \cite{Freidlin78} and Veterennikov in \cite{Vetennikov} in different contexts, finding huge applicability in a diverse range of problems. We refer the reader to the following exemplary but not exhaustive works on weak and strong averaging principles: \cite{Cerrai, Cerrai2, Cerrai4} concerning multi-scale systems constituted by stochastic partial differential equations (SPDEs for short) driven by space time white noise; \cite{Givon, Liu, Xu-Miao-Liu, Xu} for multi-scale (finite and infinite dimensional) systems constituted by jump-diffusions and \cite{Bao et al, Mao et al} for stochastic dynamical systems with coefficients functionally dependent with delay. Although the averaging principle (\ref{eq: int limit1}) yields an approximation result for small $\varepsilon>0$ of the slow variable process by the averaged dynamics of $\bar X$ nothing is said on the rate of convergence. Large and moderate deviations type of statementes provide sharper estimates within the identification of a rate of convergence for the limit (\ref{eq: int limit1}) in an exponentially small scale in $\varepsilon \ra 0$ and in terms of a deterministic quantity designated good rate function. We refer the reader to \cite{BDG18, Duan12, Kumar17, Vetennikov2}, for stochastic averaging under the large deviations regime and respectively to  \cite{Feng-Fouque-Kumar, Guillin00, Guillin03} for averaging under moderate deviations regimes. \\
 
 \no The aim of this article is to derive a moderate deviations principle (MDP for short) for $(X^\varepsilon)_{\varepsilon>0}$ as $\varepsilon \ra 0$. More precisely we will study deviations of $X^\varepsilon$ from the averaged dynamical system $\bar X$, that is
\begin{align*}
Z^\varepsilon:= \frac{X^\varepsilon- \bar X^0}{a(\varepsilon)} \quad \text{ as } \varepsilon \ra 0,
\end{align*}
for certain families of magnitude scales $a(\varepsilon)$ such that $a(\varepsilon) \ra 0$ and $b(\varepsilon):=\frac{\varepsilon}{a^2(\varepsilon)} \ra 0$ as $\varepsilon \ra 0$. We fix $\theta \in  \Big (\frac{1}{2}, 1 \Big )$ and let $b(\varepsilon):=\varepsilon^\theta$, $\varepsilon>0$.  The restrictions on the range of $\theta$ are due to parametric choices that are used in the course of the proof. This can be appreciated in the course of the proof of the technical but crucial Lemma \ref{lemma: Khasminkii segment process estimate} in the Appendix. Although we impose the restrictions on the magnitudes $a(\varepsilon)$ as stated above, the free parameter $\theta \in  \Big (\frac{1}{2}, 1 \Big ) $ still covers a big range of moderate deviations intermediary regimes. %\marginpar{\cb{point6R1}}
Assuming specific hypotheses on the coefficients that guarantee that $\bar a$ defined in (\ref{eq: int-the averaged coef}) exists, it is Fr\'{e}chet differentiable with Lipschitz derivative and that the L\'{e}vy measure $\nu$ satisfies a certain exponential integrability property, we prove that the family $(Z^\varepsilon)_{\varepsilon>0}$ satisfies a moderate deviations principle with speed $b(\varepsilon) \ra 0$ in $\DD([0,T]; \RR^d)$, the space of c\'{a}dl\'{a}g functions endowed with the Skorokhod topology, and the good rate function $\II: \DD([0,T]; \RR^d) \longrightarrow [0, \infty]$ given by
\begin{align*}
\II(\eta) := \displaystyle \inf_{(f,h) \in L^2([0,T]) \times L^2 (\nu \otimes ds)}  \frac{1}{2} \Big ( \int_0^T |f(s)|^2 ds + \int_0^T |h(s,z)|^2 \nu(dz) ds \Big ),
\end{align*}
where for every $(f,h) \in L^2([0,T]) \times L^2(\nu \otimes ds)$ the function $\eta \in C([-\tau,T]; \RR^d)$ solves uniquely the  skeleton equation: %\marginpar{\cb{point7R1}}
\begin{align} \label{eq: the controlled dynamics int}
\begin{cases}
\eta(t)&= \displaystyle \int_0^t D \bar a(\bar X^0_s) \eta_s ds + \int_0^t \cb{\sigma}(\bar X^0_s) f(s)ds + \int_0^t \int_\XX c(\bar X^0_s,z)h(s,z) \nu(dz)ds, \quad t \in [0,T]; \\
\eta_0&=0
\end{cases}
\end{align}
and the function $\bar X^0 \in C([-\tau,T]; \RR^d)$ is the unique solution of (\ref{eq: int the averaged dynamics}). \\

\no This means that the functional $\II$ has compact sublevel sets $\{ \II \leq c \}$ in the Skorokhod topology for any $c\geq 0$ and that for any open set $G \in \bB(\DD([0,T]); \RR^d))$ and closed set $F \in \bB(\DD([0,T]; \RR^d))$ the following holds:
\begin{align*}
\displaystyle \liminf_{\varepsilon \ra 0} \varepsilon^\theta \ln \PP(Z^\varepsilon \in G) &\geq - \displaystyle \inf_{\eta \in G} \II(\eta) \quad \text{and} \\
\displaystyle \limsup_{\varepsilon \ra 0} \varepsilon^\theta \ln \PP(Z^\varepsilon \in F) & \leq - \displaystyle \inf_{\eta \in F} \II(\eta).
\end{align*}

\no We stress that the moderate deviations regime of speed $b(\varepsilon)= \varepsilon^\theta$, $\theta \in \Big (\frac{1}{2},1 \Big ) $, is an intermediary regime between the central limit approximation $a(\varepsilon)= \sqrt{\varepsilon}$ and the large deviations regime $a(\varepsilon)=1$. 
The moderate deviations regime is a very desirable asymptotic regime for the sake of applications since the rate function involves a quadratic functional which is often easier to use in applied problems in comparison with the more involved forms of the rate function used in large deviations statements. We refer as examples \cite{Friz, Jacquier} for the application of moderate deviations principles in Finance, \cite{Guillin2} in Statistics and \cite{Keblaner} where the moderate deviations regime is used to study asymptotics of exit times results for discrete random dynamical systems. \\

\no In order to prove our result we use the weak convergence approach of Dupuis, Ellis, Budhiraja and collaborators that rely on the equivalence in Polish spaces between the definition of large deviations principle and the variational principle nowadays known in the literature as the Laplace-Varadhan principle. Initially, Fleming applied in \cite{Fleming1, Fleming3} methods of stochastic control to large deviations problems. The control-theoretical approach was carried out later in order to derive variational formulas for Laplace functionals of Markov processes in different contexts (cf.\cite{Dupuis Ellis}). In \cite{BD00} the authors derive a sufficient condition for large deviations principles (LDPs for short) for Brownian diffusions and later for jump-diffusions in \cite{BDM11, BCD13} through the establishment of variational formulas for Laplace functionals of Markov processes. We refer the reader for the recent book \cite{Budhiraja book} for a up-to-date %\marginpar{\cb{point8R1}} 
introduction to the subject. In \cite{BDG15} Budhiraja, Dupuis and Ganguly derive a sufficient condition for a MDP that was successfully applied  in \cite{BW16} and in \cite{Zheng-Zhai-Zhang} to the study of MDPS for SPDEs. The literature on large/moderate deviations principles for stochastic differential equations with delay is not so extensive such as in other domains of applications. We refer the reader to the works \cite{Azencott} and \cite{Lipshutz}  where the authors apply Freidlin-Wentzell types of LDPs to the study of the first exit time problem in the small noise limit for Gaussian diffusions with delay. For the application of the weak convergence approach in the establishment of MDPs to stochastic differential delay equations we mention the works \cite{Ma-Xi, Suo-Tao-Zhang}. 

\paragraph{Strategy of the proof.}

\no The proof of the main result of this work follows from an abstract sufficient condition for moderate deviations principles stated as Theorem 9.9 in \cite{Budhiraja book}. In our case the application of this abstract condition is not straightforward due to the coupling between the slow variable $X^\varepsilon$ and the fast variable $Y^\varepsilon$ in (\ref{eq: the multiscale system int}) with different scaling orders in $\varepsilon \ra 0$.  

\no More precisely the difficult part is to prove directly the following. Fix $\beta \in (0,1)$, $M \geq 0$, two families of random variables $(\xi^\varepsilon)_{\varepsilon>0}$ and $(\psi^\varepsilon:= \frac{\varphi^\varepsilon-1}{a(\varepsilon)})_{\varepsilon>0}$ such that for any $\varepsilon>0$ one has $\int_0^T |\xi^\varepsilon(s)|^2 ds \leq M a^2(\varepsilon)$, where $\varphi^\varepsilon \geq 0$ satisfies $\int_0^T \int_\XX (\varphi^\varepsilon(s,z) \ln \varphi^\varepsilon(s,z) - \varphi^\varepsilon(s,z) +1) \nu(dz) ds \leq M$ $\PP$-a.s. obeying the following convergences in law, $\xi^\varepsilon \Rightarrow \xi$ in the $L^2$-weak topology and $\psi^\varepsilon \textbf{1}_{\{|\psi^\varepsilon| \leq \frac{\beta}{a(\varepsilon)}\}} \Rightarrow \psi$ in some ball of $L^2(\nu \otimes ds)$ equipped with the respective $L^2$-weak topology. Consider the family $\zZ^\varepsilon := \frac{\xX^\varepsilon - \bar X^0}{a(\varepsilon)}$, $\varepsilon>0$, where $(\xX^\varepsilon)_{\varepsilon>0}$ is defined for every $\varepsilon>0$ and $t \in [0,T]$ by
\begin{align} \label{eq: Khasminki- controlled X variable}
\begin{cases}
 \xX^\varepsilon(t) &= \xi(0) + \displaystyle \int_0^t \Big ( a(\xX^\varepsilon_s, \yY^\varepsilon(s)) + \sigma(\xX^\varepsilon_s) \xi^\varepsilon_1(s) + \int_\XX c(\xX^\varepsilon_s,z) (\varphi^\varepsilon(s,z)-1) \nu(dz)\Big ) ds  \\
&+ \sqrt{\varepsilon} \displaystyle \int_0^t \sigma(\xX^\varepsilon_s) dB^1(s) + \varepsilon \int_0^t \int_\XX c(\xX^\varepsilon_{s-},z) \tilde N^{\frac{1}{\varepsilon} \varphi^\varepsilon}(ds,dz); \\
\xX^\varepsilon_0&= \xi.
\end{cases}
\end{align}
and
\begin{align} \label{eq: Khasminkii- controlled Y variable}
\begin{cases}
\yY^\varepsilon(t) &=y + \displaystyle \frac{1}{\varepsilon}\int_0^t \Big (f(\xX^\varepsilon_s, \yY^\varepsilon(s)) + g(\xX^\varepsilon_s, \yY^\varepsilon(s)) \xi^\varepsilon_2(s) + \int_\XX h(\xX^\varepsilon_s, \yY^\varepsilon(s),z) (\varphi^\varepsilon(s,z)-1) \nu(dz) \Big ) ds \\
& + \displaystyle \frac{1}{\sqrt{\varepsilon}} \int_0^t g(\xX^\varepsilon_s, \yY^\varepsilon(s)) dB^2(s) + \int_0^t \int_\XX h(\xX^\varepsilon_{s-}, \yY^\varepsilon(s-),z) \tilde N^{\frac{1}{\varepsilon} \varphi^\varepsilon}(ds,dz);\\
\yY^\varepsilon_0 &=y;
\end{cases}
\end{align}
where for any $\varepsilon>0$ the random measure $\tilde N^{\frac{1}{\varepsilon} \varphi^\varepsilon}$ is a controlled random measure that under a change of probability measure has the same law of $\tilde N^{\frac{1}{\varepsilon}}$ under the original probability measure. This will be rigorously stated in Section \ref{section: proof}. \\

\no Under the following setting, the main task in the derivation of the MDP is to prove that $\zZ^\varepsilon \Rightarrow \bar \zZ$ where $\bar \zZ$ solves (\ref{eq: the controlled dynamics int}) uniquely in $C([-\tau,T]; \RR^d)$ for the control $(f,g)= (\xi, \psi) \in L^2([0,T]) \times L^2(\nu \otimes ds)$. In order to prove that convergence in law we show that the family $(\xX^\varepsilon)_{\varepsilon>0}$ satisfies a tightened averaging principle, i.e. for every $\delta>0$ the following holds
\begin{align} \label{eq: the controlled averaging principle int}
\displaystyle \limsup_{\varepsilon \ra 0} \PP \Big ( \displaystyle \sup_{t \in [0,T]} |\xX^\varepsilon(t) - \bar \xX^\varepsilon(t)|> \delta a(\varepsilon) \Big )=0,
\end{align}
where $(\bar \xX^\varepsilon)_{\varepsilon>0}$ is defined for every $\varepsilon>0$ and $t \in [0,T]$ by
\begin{align} \label{eq: Khasminkii- controlled averaged variable}
\begin{cases}
\bar \xX^\varepsilon(t) &= \chi(0)+ \displaystyle\int_0^t \Big ( \bar a(\bar \xX^\varepsilon_s) + \sigma(\bar \xX^\varepsilon_s) \xi^\varepsilon_1(s) + \int_\XX c(\bar \xX^\varepsilon_s,z) (\varphi^\varepsilon(s,z)-1) \nu(dz) \Big ) ds  \\ 
& + \displaystyle \sqrt{\varepsilon} \int_0^t \sigma(\bar \xX^\varepsilon_s) dB^1(s) + \varepsilon \int_0^t \int_\XX c(\bar \xX^\varepsilon_{s-},z) \tilde N^{\frac{1}{\varepsilon} \varphi^\varepsilon}(ds,dz);\\
\bar \xX^\varepsilon_0&= \xi.
\end{cases}
\end{align}
This will imply by Slutzky's theorem (Theorem 4.1 in \cite{Billingsley}) that $(\zZ^\varepsilon)_{\varepsilon>0}$ has the same weak limit of $(\bar \zZ^\varepsilon)_{\varepsilon>0}$ where $\bar \zZ^\varepsilon:= \frac{\bar \xX^\varepsilon -\bar X^0}{a(\varepsilon)}$, $\varepsilon>0$. And therefore we are conducted to the (easier) task to show that $\bar \zZ^\varepsilon \Rightarrow \bar Z$ (since the dynamics of (\ref{eq: Khasminkii- controlled averaged variable}) is decoupled from the dynamics of the fast variable of the original stochastic system (\ref{eq: the multiscale system int})). \\

\no The proof that $\bar \zZ^\varepsilon \Rightarrow \bar Z$ as $\varepsilon \ra 0$ relies on classical arguments of weak convergence. We use localization techniques in order to obtain good estimates for the second moment of the processes in combination with the Bernstein's inequality for c\`{a}dl\`{a}g local martingales given in the form of Theorem 3.3 of \cite{DZ01} implying the tightness of the respective laws. Hence the relative compactness of the laws follows yielding, due to  Skorohod's representation together with the well-posedness of the skeleton equation (\ref{eq: the controlled dynamics int}), the desired conclusion. \\

\no The proof of the tightened controlled averaging principle (\ref{eq: the controlled averaging principle int}) is inspired on the classical Khasminkii's technique introduced in \cite{Khasminskii}. In a nutshell the procedure relies on a discretization of the time interval $[0,T]$ and the delay initial interval $[-\tau,0]$ in a finite number of intervals with same length $\Delta(\varepsilon) \ra 0$ as $\varepsilon \ra 0$ satisfying some growth conditions that will interplay with the ergodic properties of the averaged dynamics via the construction of auxiliary processes $(\hat \xX^\varepsilon)_{\varepsilon>0}$ and $(\hat \yY^\varepsilon)_{\varepsilon>0}$. The construction of the auxiliary processes is not a straightforward generalization of the Khaminkii's type of discretizations used to prove the usual strong averaging principle. In our setting we need to build stable not-straightforward discretizations $(\hat \xX^\varepsilon)_{\varepsilon>0}$ and $(\hat \yY^\varepsilon)_{\varepsilon>0}$ in order to deal with the nonlocal integral terms that appear in the structure of the respective equations of $(\xX^\varepsilon)_{\varepsilon>0}$ and $(\yY^\varepsilon)_{\varepsilon>0}$. The proof of (\ref{eq: the controlled averaging principle int}) builds heavily on the derivation of stable estimates for the deviations of the segment process $(\hat \xX^\varepsilon_t)_{t \in [0,T]}$ from the slow variable's segment $(\xX^\varepsilon_t)_{t \in [0,T]}$ and respectively the deviations of the approximation $(\hat \yY^\varepsilon(t))_{t \in [0,T]}$ from the fast variable controlled process $(\yY^\varepsilon(t))_{t \in [0,T]}$. We derive asymptotic bounds in $\varepsilon>0$ for the second moment of the deviations of the fast variable from its discretization in contrast with the way we estimate the respective deviations of the slow segment from its approximation. Due to dependence on the segment process given in the dynamics of $(\xX^\varepsilon)_{\varepsilon>0}$ it turns out to be better to control the probability of the slow component deviations for the purpose of obtaining (\ref{eq: the controlled averaging principle int}). This is a technical but major distinction of the technique for obtaining the strong controlled averaging principle (\ref{eq: the controlled averaging principle int}) in comparison with the usual techniques available in the literature.\\

\no Our main result shows in particular that $(X^\varepsilon)_{\varepsilon>0}$ obeys the same moderate deviations principle of $(\bar X^\varepsilon)_{\varepsilon>0}$ where we define the averaged process $\bar X^\varepsilon$ for every $\varepsilon>0$ and $t \in [0,T]$ by
\begin{align*}
\bar X^\varepsilon(t)= \zeta(0) + \int_0^t \bar a( \bar X^\varepsilon_s) ds + \sqrt{\varepsilon} \int_0^t \sigma(\bar X^\varepsilon_s)dB^1(s) + \varepsilon \int_0^t \int_\XX c(\bar X^\varepsilon_{s-},z) \tilde N^{\frac{1}{\varepsilon}}(ds,dz).
\end{align*}
\no One could firstly derive the moderate deviations principle for $(\bar X^\varepsilon)_{\varepsilon>0}$ and secondly show that the families $(X^\varepsilon)_{\varepsilon>0}$ and $(\bar X^\varepsilon)_{\varepsilon>0}$ are exponentially equivalent, i.e. for every $\delta>0$ we have
\begin{align} \label{eq: int-exponential negligibility}
\displaystyle \lim_{\varepsilon \ra 0} \frac{\varepsilon}{a^2(\varepsilon)} \ln \PP \Big ( \sup_{0 \leq t \leq T} \Big | \frac{X^\varepsilon(t) - \bar X^\varepsilon(t)}{a(\varepsilon)} \Big | > \delta\Big )=-\infty.
\end{align}
This would imply that $(X^\varepsilon)_{\varepsilon>0}$ obeys the same MDP of $(\bar X^\varepsilon)_{\varepsilon>0}$ as $\varepsilon \ra 0$. However verifying the exponential equivalence of those families is in general hard. The reasoning employed in this work illustrates the robustness of the weak convergence approach providing a way of reducing the proof of the MDP to the verification of properties concerning continuity and tightness of certain auxiliary processes associated to $(X^\varepsilon)_{\varepsilon>0}$. Such reduction of complexity in such endeavour can be appreciated immediately by the contrast between the $0$ scale of the limit (\ref{eq: the controlled averaging principle int}) with the exponential negligibility demanded in the establishment of the limit (\ref{eq: int-exponential negligibility}).

\paragraph*{Notation.} The arrow $\Rightarrow$ means convergence in distribution. Throughout the article we use when convenient the shorthand notation $A (\varepsilon)\lesssim_\varepsilon B(\varepsilon)$ to mean  that there exist a constant $ c>0$ independent of $\varepsilon>0$ and $\varepsilon_0>0$ such that $A(\varepsilon) \leq cB(\varepsilon)$ for every $\varepsilon< \varepsilon_0$. We write $A(\varepsilon) \simeq_\varepsilon B(\varepsilon)$ as $\varepsilon \ra 0$ to mean that $A(\varepsilon) \lesssim_\varepsilon B(\varepsilon)$ and $B(\varepsilon) \lesssim_\varepsilon A(\varepsilon)$ as $\varepsilon \ra 0$.

\paragraph*{Outline of the paper.} In section \ref{section: statement and preliminaries} we state with full detail the probabilistic framework and the hypothesis on the coefficients of (\ref{eq: the multiscale system int}) in order to state with full rigour the already announced MDP for the family $(Z^\varepsilon)_{\varepsilon>0}$. We finish that section with some examples. Section \ref{section: proof} contains the proof of the main result following the already announced strategy with full detail. The Appendix contains for the reader's convenience technical auxiliary results that can be skipped in a first reading. 
\section{Preliminaries and statement of the main theorem} \label{section: statement and preliminaries}

\subsection{The probabilistic and functional setup. The averaged dynamics.}
\subsubsection{The probabilistic setup and notation.}
\no We follow extensively the probabilistic ansatz and the notation introduced by Budhiraja, Dupuis, Maroulas and collaborators in \cite{BDM11, BCD13, BDG15} and systematized in \cite{Budhiraja book}. For any $\SSS$ topological space we denote by $\bB(\SSS)$ its Borel $\sigma$-algebra. Fix $T>0$, $n=d+k$ with $d,k \in \NN$ and let $\WW= C([0,T];\RR^n)$ endowed with the topology of the uniform convergence  which turns out to be a Polish space. 
Let $\XX= \RR^d \backslash \{0\}$ and $\MM$ be the space of locally finite measures defined on $(\XX, \bB(\XX))$. We endow $\MM$ with the weakest topology such that  for every $f \in \CC_c(\XX)$ (the space of compactly supported continuous functions)  the function $\nu \mapsto \langle \nu , f \rangle:= \int_{\XX} f(u) \nu(du)$, $\nu \in \MM$, is continuous. This topology is known as the vague topology and can be metrized such that $\MM$ turns out to be a Polish space. We refer the reader to \cite{BDM11}.\\
 Fix a measure $\nu \in \MM$ and let $\nu_T = ds \otimes \nu$ where $\ds$ is the Lebesgue measure on $[0,T]$. Consider the product space $\VV= \WW \times \MM$ and denote by $\PP$ the unique probability measure on $(\VV, \bB(\VV))$ under which the first projection $B: \VV \longrightarrow \WW$,  $B(\beta,m)= \beta$ is a standard Brownian motion with values in $\RR^n$ and $N: \VV \longrightarrow \MM$,  $N(\beta,m):= m$ is a Poisson random measure with intensity measure $\nu_T$ The corresponding expectation operator will be denoted by $\EE$. We refer the reader to Theorem I.9.1 in \cite{Ikeda Watanabe}. 
Let $\YY:= \XX \times [0,\infty)$, $\YY_T:= [0,T] \times \YY$, write $\bar \MM$ for the space of the locally finite measures defined on $\YY_T$ when equipped with its Borel $\sigma$-algebra and $\bar \VV:= \WW \times \bar \MM$. In a slight abuse of notation and analogously to what was said to $\MM$, the space $\bar \MM$ turns out to be also a Polish space and there exists a unique probability measure $\cb{\bar \PP}$defined on $(\bar \VV, \bB(\bar \VV))$ such that the maps $B: \bar \VV \longrightarrow \WW$, $B(\beta, \bar m):= \beta$ is a standard Brownian motion with values in $\RR^n$ and $\bar N: \bar \VV \longrightarrow \bar \MM$, $\bar N(\beta,\bar m):= \bar m$ is a Poisson random measure with values on $\bB(\RR^d \times \RR^d \backslash \{0\} \times [0,\infty))$ and intensity measure given by $ds \otimes \nu \otimes dr$, where $dr$ stands for the Lebesgue measure on $([0,\infty); \bB([0,\infty))$.

\no For every $\varepsilon>0$ we consider $N^{\frac{1}{\varepsilon}}$ the Poisson random measure defined on the probability space $(\VV, \bB(\VV))$ with intensity measure given by $\frac{1}{\varepsilon} ds \otimes \nu \otimes dr$ and $\tilde N^{\frac{1}{\varepsilon}}$ for its compensated counterpart.  We also regard when necessary the object $N^{\frac{1}{\varepsilon}}$ as a controlled random measure on $(\bar \VV, \bB(\bar \VV))$ (and therefore $\bB(\bar \VV)$-measurable) under $\bar \PP$ by the identity %\marginpar{\cb{point9R1}}
\begin{align} \label{eq: the controlled random measure}
N^{\frac{1}{\varepsilon}}((0,t] \times U) := \int_0^t \int_U \int_0^\infty \textbf{1}_{[0,\frac{1}{\varepsilon}]}(r) \bar N(ds,dx,dr), \quad t \in [0,T], U \in \bB(\XX).
\end{align}
We remark that the space $\YY:= \XX \times [0,\infty)$ takes into account the jumps and the frequencies of the underlying Poisson random measure $N$ and refer the reader to \cite{BDM11} for more details. \\

For any $t \in [0,T]$ define 
\begin{align*}
\fF_t:= \sigma \{ \bar N((0,s] \times A); B(s) ~|~ 0 \leq s \leq t, A \in \bB(\YY) \}
\end{align*}
and denote by $\bar \FF:= \{\bar \fF_t\}_{t \in [0,T]}$ the completion of $\FF:= \{\fF_t\}_{t \in [0,T]}$ under $\bar \PP$. Consider $\bar \pP$ the predictable $\sigma$-field on $[0,T] \times \bar \VV$ with the filtration $\bar \FF$ on $(\bar \VV, \bB(\bar \VV))$. \\

We make the following assumption on $\nu \in \MM$.
\begin{condition} \label{condition: the measure}
The measure $\nu \in \MM$ is a L\'{e}vy measure on $(\RR^d \backslash \{0\}, \bB(\RR^d \backslash \{0\}))$, i.e. such that
$\int_{0 < |z|<1}  |z|^2 \nu(dz) < \infty$ %\marginpar{\cb{point10R1}} 
and satisfying
\begin{align} \label{eq: integrability condition measure}
\int_{|z|\geq 1} e^{\alpha |z|^2} \nu(dz) < \infty, \quad \text{ for some } \alpha >1.
\end{align}
\end{condition}
%\marginpar{\cg{point1R2}}
\begin{remark}
\no We stress that the assumption of Gaussian tails (\ref{eq: integrability condition measure}) is paradigmatic within the use of  weak convergence approach arguments for the derivation of moderate deviations principles for jump processes. It is used in the pioneer work \cite{BDG15} and further extensive follow up works that exist in the literature. The assumption of exponential tails for laws that obey large deviations principles is a classical ansatz in the literature of large deviations principles. We cite as reference the Donsker-Varadhan theorem stated as Theorem 3.34 in the monography \cite{Stroock}. The assumption of Gaussian tails (\ref{eq: integrability condition measure}) for $\nu$ is sufficient to the proof of Lemma \ref{lemma: integrability controls} in the Appendix which turns out to be a technical fundamental intermediary result that is fundamental in the derivation of the moderate deviations principle for $(X^\varepsilon)_{\varepsilon>0}$. This restriction still captures a rich class of L\'{e}vy measures $\nu$ allowing the occurrence of infinitely small jumps as it is exhibited in subsection \ref{subsection: examples}. We refer the reader to \cite{Nishimori} for a discussion of the large deviations principle for symmetric stable processes that uses a very different approach than the one we use.
\end{remark}
\paragraph{The space of the delays and the segment function.} %\marginpar{\cred{not needed- only C--check ref \cite{Bao et al}}}
Fix now $\tau>0$. Given a path $x:[-\tau,T] \longrightarrow \RR^d$ and $t \geq 0$, we use the notation $x_t$ for the segment path defined as $x_t(\theta):=x(t+\theta)$, $\theta \in [-\tau,0]$. Denote by $C([-\tau,T]; \RR^d)$ the space of continuous paths equipped with the uniform norm. We write $\cC:= C([-\tau,0];\RR^d)$.
Let $\DD([-\tau,T];\RR^d)$ be the space of the c\`{a}dl\`{a}g functions equipped with the topology inherited by the $J_1$-metric known as the Skorokhod topology (cf. Chapter 3-p. 111 in \cite{Billingsley}). We write $\dD:= \DD([-\tau,0];\RR^d)$. The space $\DD([-\tau,T];\RR^d)$ turns out to be Polish under this metric.  We refer the reader to Theorem 12.1 and Theorem 12.2 in \cite{Billingsley} for more details. For any $x \in \DD([-\tau,T];\RR^d)$ we write $||x_t||_\infty:= \displaystyle \sup_{-\tau \leq s \leq t} |x(s)|$, $t \geq 0$.
\bigskip
\subsubsection{The multiscale system} %\cred{Thursday 08 August}
For every $T>0$, $\tau>0$ and $\varepsilon>0$ we consider the following system of stochastic differential equations, 
\begin{align} \label{eq: the multiscale SDE}
\begin{cases}
X^\varepsilon(t)&= X^\varepsilon(0)+ \displaystyle \int_0^t a(X^\varepsilon_s, Y^\varepsilon(s))ds + \sqrt{\varepsilon} \int_0^t \sigma(X^{\varepsilon}_s) dB^1(s)  + \varepsilon \displaystyle \int_0^t \int_\XX c(X^\varepsilon_{s-},z) \tilde N^{\frac{1}{\varepsilon}}(ds,dz); \\
Y^\varepsilon(t)&= y + \displaystyle \frac{1}{\varepsilon} \int_0^t f(X^\varepsilon_s, Y^\varepsilon(s)) ds + \frac{1}{\sqrt{\varepsilon}} \int_0^t g(X^\varepsilon_s, Y^\varepsilon(s)) dB^2(s) \\
&+\displaystyle \int_0^t \int_\XX h(X^\varepsilon_{s-}, Y^\varepsilon(s-),z) \tilde N^{\frac{1}{\varepsilon}}(ds,dz), \quad t \in [0,T];
\end{cases}
\end{align}
subject to the initial datum
\begin{align} \label{eq: the multisclae SDE initial datum}
\begin{cases}
X^\varepsilon_0&= \chi \in \cC, \\
Y^\varepsilon(0) &= y \in \RR^k,
\end{cases}
\end{align}
where we write $(B(t))_{t \in [0,T]}= (B^1(t), B^2(t))_{t \in [0,T]}$ with $(B^1(t))_{t \in [0,T]}$ and $(B^2(t))_{t \in [0,T]}$ two independent standard Brownian motions with values in $\RR^d$ and $\RR^k$ respectively.  %\marginpar{\cb{point11R1}}
 We stress out that the multi-scale system (\ref{eq: the multiscale SDE}) has slow and fast component respectively affected by different Brownian signals in small intensity $\varepsilon$ and by the same jump noise signal also in small intensity $\varepsilon>0$ but accelerated in inverse proportion. While the process $(B^1, B^2)$ is also a BM in the space $\RR^{d \times k}$ due to the independence of each component the same does not hold for Poisson random measures in the respective product space of measures. For this reason it is not clear how to use the weak convergence approach developed in \cite{BDG15} that builds in the derivation of a variational formula for functionals of Poisson random measures established in \cite{BDM11}.
In order to guarantee existence and uniqueness of solution for (\ref{eq: the multiscale SDE}) we assume that its coefficients are deterministic measurable functions $a: \dD \times \RR^n \longrightarrow \RR^d$, $\sigma: \dD \longrightarrow \RR^{d \times d}$, $c: \dD \times \XX \longrightarrow \RR^d$, $f: \dD \times \RR^n \longrightarrow \RR^{n \times n}$, $g: \dD \times \RR^k \longrightarrow \RR^{n \times n}$ and $h: \dD \times \RR^n \times \XX \longrightarrow \RR^n$ satisfying the following.
\begin{condition} \label{condition: assumptions for existence uniqueness solution} 
\begin{itemize}
\item[1.] There exists $L>0$ such that for every $\varphi , \tilde \varphi \in \dD$ and $y, \tilde y \in \RR^n$ the following holds %\marginpar{\cb{point12R1-erase $|z|$}}
\begin{align} \label{eq: condition-Lipschitz}
|a(\varphi,y) - a(\tilde \varphi, \tilde y)| & \leq L \Big ( \displaystyle \sup_{t \in [-\tau,0]} |\varphi(t)- \tilde\varphi(t)| + |y - \tilde y| \Big ) \nonumber \\
|\sigma(\varphi) - \sigma(\tilde \varphi)| &\leq L  \Big (\displaystyle \sup_{t \in [-\tau,0]} |\varphi(t)- \tilde \varphi(t)| \Big ) \nonumber \\
\int_{\XX} |c(\varphi,z) - c(\tilde \varphi, z)| \nu(dz) &\leq L  \Big ( \displaystyle \sup_{t \in [-\tau,0]} |\varphi(t)- \tilde \varphi(t)| \Big )   \nonumber \\
|f(\varphi, y) - f(\tilde \varphi, \tilde y)| &\leq L \Big (\displaystyle \sup_{t \in [-\tau,0]} |\varphi(t)- \tilde \varphi(t)|+ |y - \tilde y| \Big ) \nonumber  \\
|g(\varphi,y) - g(\tilde \varphi, \tilde y)| &\leq L \Big (\displaystyle \sup_{t \in [-\tau,0]} |\varphi(t)- \tilde \varphi(t)| + |y - \tilde y| \Big ) \nonumber \\
\int_\XX |h(\varphi,y,z)- h(\tilde \varphi, \tilde y, z)| \nu(dz) & \leq L \Big ( \displaystyle \sup_{t \in [-\tau,0]} |\varphi(t)- \tilde \varphi(t)|   + |y - \tilde y| \Big ).
\end{align}
\item[2.] The functions $c(0,z), h(0,0,z)$ are in $L^1(\nu)$.
\end{itemize}
\end{condition} 

\begin{rem} \label{remark: sublinear growth}
Hypothesis \ref{condition: assumptions for existence uniqueness solution} implies that the coefficients have linear %\marginpar{\cb{point13R1}} 
growth; i.e. there exists $L_1>0$ such that, for any $\varphi \in \dD$ and $y \in \RR^n$, %\marginpar{\cb{point14R1-erase $|z|$}}
\begin{align} \label{eq: sublinear growth of the coefficients}
|a(\varphi,y)| & \leq L_1 \Big ( 1+ \displaystyle \sup_{t \in [-\tau,0]} |\varphi(t)|+ |y| \Big ) \nonumber \\
|\sigma(\varphi)| &\leq L_1 \Big ( 1+\displaystyle \sup_{t \in [-\tau,0]} |\varphi(t)| \Big ) \nonumber \\
\int_{\XX} |c(\varphi,z)| \nu(dz) &\leq L_1  \Big (1+ \displaystyle \sup_{t \in [-\tau,0]} |\varphi(t)| \Big ) \nonumber \\
|f(\varphi,y)| &\leq L_1 \Big (1+ \displaystyle \sup_{t \in [-\tau,0]} |\varphi(t)| + |y| \Big ) \nonumber  \\
|g(\varphi,y)| &\leq L_1 \Big (1+ \displaystyle \sup_{t \in [-\tau,0]} |\varphi(t)|+ |y| \Big ) \nonumber \\
\int_\XX |h(\varphi,y,z)| \nu(dz) &\leq L_1 \Big ( 1 +\displaystyle \sup_{t \in [-\tau,0]} |\varphi(t)|+ |y| \Big ) .
\end{align}
\end{rem}

The following assumption on the initial delay segment $\zeta$ given in (\ref{eq: the multisclae SDE initial datum}) is of great importance in the establishment of stable estimates for which we derive (\ref{eq: the controlled averaging principle int}).
\begin{condition} \label{condition: initial delay is Lipschitz}
The function $\chi \in \cC$ is Lipschitz continuous with Lipschitz constant $\lambda>0$, i.e.
\begin{align} \label{eq: initial delay is Lipschitz}
|\chi(\theta_1) - \chi(\theta_2)| \leq \lambda |\theta_1 -\theta_2|, \quad \text{for every } \theta_1, \theta_2 \in [-\tau,0].
\end{align}
\end{condition}

\begin{definition} \label{definition: solution of the multiscale system}
Given $T>0$, $\tau>0$, $\varepsilon>0$, $\zeta \in \cC$ and $y \in \RR^k$ we consider the stochastic basis $(\bar \VV, \bB(\bar \VV), \bar \FF, \PP)$. A strong solution of (\ref{eq: the multiscale SDE}) with initial datum (\ref{eq: the multisclae SDE initial datum}) is a stochastic process $(X^\varepsilon,Y^\varepsilon):= \{ (X^\varepsilon(t), Y^\varepsilon(t))\}_{t \in [-\tau,T]}$ such that $X^\varepsilon_0=\chi$, $Y^\varepsilon(0)=y$, $X^\varepsilon(t)$ is $\fF_0$-measurable for any $t \in [-\tau,0]$, $(X^\varepsilon(t), Y^\varepsilon(t))_{t \in [0,T]}$ is $\bar \FF$-adapted and solves (\ref{eq: the multiscale SDE}) $\PP$-a.s.
\end{definition}

\no  We write $\fF_t= \fF_0$ for any $t \in [-\tau,0]$. For any $t \in [0,T]$ and $\varepsilon>0$ the random variables $X^\varepsilon(t) \in \RR^d$ and $Y^\varepsilon(t) \in \RR^k$ are called slow and fast variables respectively under the scale separation by the parameter $\varepsilon>0$ in the vanishing limit $\varepsilon \ra 0$. We underline that the stochastic differential equation for the slow variable $X^\varepsilon$ lifts the problem to an infinite-dimensional setting due to the dependence of the coefficients in terms of the segment path process. \\

\no Given  $T,\tau>0$, $m \in \NN$ and $\bar \FF:= \{ \bar \fF_t \}_{t \in [0,T]}$ we define the space 
\begin{align*}
\sS^2_{\bar \FF}([-\tau,T]; \RR^k) := \Big  \{ \varphi: \Omega \times [-\tau,T] \longrightarrow \RR^k ~|~  &\varphi \text{ is } \bar  \FF-\text{adapted with c\`{a}dl\`{a}g paths such that } \\
&\EE \Big [ \displaystyle \sup_{-\tau \leq u \leq T} |\varphi(u)|^2 \Big ] < \infty \Big \}.
\end{align*}
\no The existence and uniqueness of the solution process $(X^\varepsilon(t), Y^\varepsilon(t))_{t \in [-\tau,T]} \in \sS^2_{\bar \FF}([-\tau,T]; \RR^d) \times \sS^2_{\bar \FF}([- \tau,T]; \RR^n)$  of (\ref{eq: the multiscale SDE}) with initial data (\ref{eq: the multisclae SDE initial datum}) follows from Lemma V.2 and Theorem V.7 of \cite{Protter}, using the convention that $Y^\varepsilon(t)=y$ for all $t \in [- \tau,0]$. This is the content of the following result.
\begin{theorem} \label{theorem: existence uniqueness solution}
Fix $T, \tau, \varepsilon>0$ and $y \in \RR^k$. Let us assume that Hypotheses \ref{condition: the measure},  \ref{condition: assumptions for existence uniqueness solution} and \ref{condition: initial delay is Lipschitz} hold for some $\nu \in \MM$ and $\chi \in \cC$. Then there exists a stochastic process 
$$(X^\varepsilon(t), Y^\varepsilon(t))_{t \in [-\tau,T]}\in \sS^2_{\bar \FF}([-\tau,T]; \RR^d) \times \sS^2_{\bar \FF}([-\tau,T]; \RR^n)$$ that solves uniquely (\ref{eq: the multiscale SDE}) in the sense of Definition (\ref{definition: solution of the multiscale system}).  
\end{theorem}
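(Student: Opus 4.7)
The plan is to construct the solution by a Picard iteration scheme in the Banach space $\sS^2_{\bar\FF}([-\tau,T];\RR^d) \times \sS^2_{\bar\FF}([-\tau,T];\RR^k)$, and to verify the hypotheses of the abstract existence/uniqueness theorem for SDEs driven by semimartingales (Lemma V.2 and Theorem V.7 in \cite{Protter}). Since $\varepsilon>0$ is fixed, the multiplicative factors $1/\varepsilon$ and $1/\sqrt{\varepsilon}$ are harmless constants and play no role in the argument.

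First, I would define the iterates $(X^{(0)},Y^{(0)})$ by extending the initial data trivially, i.e.\ $X^{(0)}(t)=\chi(t\wedge 0)$ for $t\in[-\tau,T]$ and $Y^{(0)}(t)\equiv y$, and then set $(X^{(n+1)},Y^{(n+1)})$ to be the right-hand side of (\ref{eq: the multiscale SDE}) evaluated at $(X^{(n)},Y^{(n)})$. Using Hypothesis \ref{condition: assumptions for existence uniqueness solution} and the sublinear growth bounds recorded in Remark \ref{remark: sublinear growth}, together with Doob's maximal inequality for the Brownian integrals and Kunita's $L^2$ inequality for the compensated Poisson integrals (whose applicability follows from Hypothesis \ref{condition: the measure}, which guarantees finite second moments of $\nu$ restricted to $|z|\geq 1$ and the L\'{e}vy integrability condition near zero), each iterate lies in $\sS^2_{\bar\FF}$. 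The key input is that Condition \ref{condition: the measure} ensures $\int_{\XX}|z|^2\,\nu(dz)<\infty$, so that Poisson stochastic integrals of the coefficients $c,h$ are square-integrable martingales.

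The heart of the argument is a contraction estimate. Setting $\Delta X^{(n)}:= X^{(n+1)}-X^{(n)}$ and $\Delta Y^{(n)}:= Y^{(n+1)}-Y^{(n)}$, I would apply It\^{o}'s isometry and the BDG inequality to obtain, for every $t\in[0,T]$,
\begin{align*}
\EE\Big[\sup_{-\tau\leq u\leq t}|\Delta X^{(n)}(u)|^{2}\Big] + \EE\Big[\sup_{0\leq u\leq t}|\Delta Y^{(n)}(u)|^{2}\Big]
\leq C_{\varepsilon}\int_{0}^{t}\EE\Big[\|\Delta X^{(n-1)}_{s}\|_{\infty}^{2}+|\Delta Y^{(n-1)}(s)|^{2}\Big]\,ds,
\end{align*}
where the Lipschitz bounds in (\ref{eq: condition-Lipschitz}) are used in sup-norm on the segments, and $C_{\varepsilon}$ is a constant depending on $\varepsilon,L,T$ and $\int_{\XX}|z|^{2}\nu(dz)$. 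Since $\|\Delta X^{(n-1)}_{s}\|_{\infty}^{2}\leq \sup_{-\tau\leq u\leq s}|\Delta X^{(n-1)}(u)|^{2}$, the segment dependence is controlled by the running supremum. Iterating this inequality (or, equivalently, switching to an exponentially weighted norm $\|\cdot\|_{\beta}$ with $\beta$ large enough) yields a strict contraction, so the iterates converge in $\sS^2_{\bar\FF}$ to a limit $(X^\varepsilon,Y^\varepsilon)$ which, by passage to the limit in the stochastic integrals and a standard localization argument, solves (\ref{eq: the multiscale SDE}). Uniqueness is obtained by applying the very same estimate to the difference of two solutions and invoking Gronwall's lemma.

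The main technical obstacle is the functional dependence of the drift and diffusion coefficients on the segment process $X^\varepsilon_s$: the underlying state space is infinite-dimensional, so contraction must be established in the sup-norm on $[-\tau,s]$ rather than pointwise. This is precisely why Hypothesis \ref{condition: assumptions for existence uniqueness solution} is formulated with respect to the uniform norm on $\dD$, and why Hypothesis \ref{condition: initial delay is Lipschitz} (although not strictly required for the fixed point) fits naturally with the a priori bounds that will be needed later in Section \ref{section: proof}. A secondary issue is the joint treatment of slow and fast components, which is resolved by running the Picard iteration on the coupled pair simultaneously: the Lipschitz dependence of $(f,g,h)$ on $\varphi\in\dD$ in sup-norm guarantees that the contraction inequality for $\Delta Y^{(n)}$ closes into the same Gronwall loop as that for $\Delta X^{(n)}$.
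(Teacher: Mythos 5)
Your proposal is correct and follows essentially the same route as the paper: the paper simply cites Lemma V.2 and Theorem V.7 of Protter (with the convention $Y^\varepsilon(t)=y$ for $t\in[-\tau,0]$), and your Picard iteration with segment-wise sup-norm contraction is precisely the argument underlying those results. Your observation that Hypothesis~\ref{condition: initial delay is Lipschitz} is not needed for the fixed-point argument, and that the real input from Hypothesis~\ref{condition: the measure} is $\int_{\XX}|z|^{2}\,\nu(dz)<\infty$, is accurate.
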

\bigskip
\subsubsection{The averaged dynamics}
\no We make the further dissipativity  and boundedness assumptions on the coefficients of (\ref{eq: the multiscale SDE}) that yield the existence and uniqueness of solution for the averaged dynamics given by (\ref{eq: int the averaged dynamics}) and some stable a-priori estimates that will be crucial in the derivation of the result announced in the Introduction.
\begin{condition} \label{condition: dissipativity}
\begin{enumerate}
 \item The function $a$ satisfies $a(0,y)=0$ for any $y \in \RR^k$ and there exists $\Lambda>0$ such that
 \begin{align} \label{eq: g and h bounded}
 |g(\zeta ,y)| &\leq \Lambda \nonumber \\
  |h(\zeta,y,z)| & \leq \Lambda |z|, \quad \text{for every } \zeta \in \dD, y \in \RR^k, z \in \XX.
 \end{align}
\item
There exist constants $\beta_1, \beta_2>0$, such that, for any $\zeta, \zeta_1 \in \dD$, $y, \tilde y \times \RR^k$   one has %\marginpar{\cb{dissipativity of $a$ erased-check before the proof of $L^2$ control of $X^\varepsilon$}}
%\begin{align} \label{eq: dissipativity of the mixing coefficient}
%\langle a(\zeta, y) - a(\tilde \zeta,y) , \zeta(0) - \tilde \zeta (0) \rangle \leq - \beta_1 ||\zeta - \tilde \zeta||_\infty^2;
%\end{align}
\begin{align} \label{eq: dissipativity on the coefficients}
2 \langle y, f(\zeta,y) \rangle + |g(\zeta,y)|^2 + \int_\XX |h(\zeta,y,z)|^2 \nu(dz)  &\leq - \beta_1  |y|^2 + \beta_2 ||\zeta||_\infty^2;
\end{align}
\begin{align} \label{eq: dissipativity Lipschitz on the coefficients}
2 \langle y - \tilde y , f(\zeta,y) - f(\zeta,\tilde y) \rangle + |g(\zeta,y) - g(\zeta, \tilde y)|^2 &+ \int_\XX |h(\varphi,y,z) - h(\varphi, \tilde y,z)|^2 \nu(dz) \nonumber  \\
&\leq - \beta_1 |y - \tilde y|^2 + \beta_2 ||\zeta||^2_\infty
\end{align}
and
\begin{align} \label{eq: dissipativity- the last one required }
2 \langle y - \tilde y , f(\zeta,y) - f(\zeta_1,\tilde y) \rangle \leq - \beta_1 |y- \tilde y|^2 + \beta_2 ||\zeta - \zeta_1||^2_\infty
\end{align}
\end{enumerate}
\end{condition}
\begin{remark}% \marginpar{\cg{point2R2}}
\no We do not consider a more general framework than Hypotheses \ref{condition: the measure}-\ref{condition: dissipativity} to derive the moderate deviations principle for the family of slow variables $(X^\varepsilon)_{\varepsilon>0}$ from (\ref{eq: the multiscale SDE}). Although  it would be possible to derive the same result under the setting of locally Lipschitz coefficients and the usual weaker local versions of dissipativity conditions stated in Hypothesis \ref{condition: dissipativity} . The reason builds on how the weak convergence approach bypasses the usual verification of exponential tightness through the verification of tightness for controlled modifications of the processes $X^\varepsilon$ under which the use of the usual localization probabilistic techniques work well. Attaining such degree of generality to the cost of a more technical text is beyond the scope of our work.
\end{remark}
\no The following a-priori estimates are straightforward and we omit their proofs.
\begin{proposition} \label{prop: a priori bound slow variable}
Fix $T,\tau >0$ and $y \in \RR^k$. Let Hypothesis \ref{condition: the measure}-\ref{condition: dissipativity} hold for some $\nu \in \MM$ and $\chi \in \cC$. There exists a constant $C_1>0$ independent of $\varepsilon>0$ such that  for all $0 < \varepsilon < 1$ we have
\begin{align} \label{eq: a priori bound slow-fast variable}
\bar \EE \Big [ \displaystyle \sup_{- \tau \leq t  \leq T}  |X^\varepsilon(t)|^2 \Big ]  + \displaystyle \sup_{0 \leq t \leq T} \bar \EE \Big [ |Y^\varepsilon(t)|^2  \Big ] \leq C_1.
\end{align}
\end{proposition}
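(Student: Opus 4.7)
The plan is to estimate the fast variable in $L^2$ using the strong dissipativity built into Hypothesis \ref{condition: dissipativity}, then feed that bound into an $L^\infty$ estimate for the slow segment obtained by an It\^{o}--Burkholder--Davis--Gundy argument, and finally close with Gronwall.

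First I would apply It\^{o}'s formula to $|Y^\varepsilon(t)|^2$. Taking expectation kills the Brownian and compensated Poisson martingale integrals, while the quadratic variation produced by the stochastic integrals combines with $2\langle Y^\varepsilon, f\rangle$ into exactly the left-hand side of (\ref{eq: dissipativity on the coefficients}), scaled by $1/\varepsilon$. This yields the differential inequality
\begin{equation*}
\frac{d}{dt}\bar\EE\bigl[|Y^\varepsilon(t)|^2\bigr] \;\leq\; \frac{1}{\varepsilon}\Bigl(-\beta_1\,\bar\EE\bigl[|Y^\varepsilon(t)|^2\bigr] + \beta_2\,\bar\EE\bigl[\|X^\varepsilon_t\|_\infty^2\bigr]\Bigr),
\end{equation*}
and the variation-of-constants formula (together with $\int_0^t (1/\varepsilon)e^{-\beta_1(t-s)/\varepsilon}\,ds \leq 1/\beta_1$) gives
\begin{equation*}
\bar\EE\bigl[|Y^\varepsilon(t)|^2\bigr] \;\leq\; |y|^2 \;+\; \frac{\beta_2}{\beta_1}\,\sup_{0\leq s\leq t}\bar\EE\bigl[\|X^\varepsilon_s\|_\infty^2\bigr].
\end{equation*}
This bound is crucially uniform in $\varepsilon>0$: the decay rate $\beta_1/\varepsilon$ exactly compensates the $1/\varepsilon$ amplification in the drift of $Y^\varepsilon$, which is where the fast relaxation is used.

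Next, for the slow variable, I would take squared norms of the integral form of $X^\varepsilon$, apply the BDG inequality to the Brownian and compensated Poisson stochastic integrals (whose prefactors $\sqrt{\varepsilon}$ and $\varepsilon$ keep these terms uniformly controlled as $\varepsilon\to 0$, with the exponential integrability (\ref{eq: integrability condition measure}) of $\nu$ ensuring finiteness of the jump quadratic variation), and invoke the sublinear bounds of Remark \ref{remark: sublinear growth} on the drift. This gives, for some $C>0$ independent of $\varepsilon$,
\begin{equation*}
\bar\EE\Bigl[\sup_{-\tau\leq u\leq t}|X^\varepsilon(u)|^2\Bigr] \;\leq\; C\Bigl(1 + \|\chi\|_\infty^2 + \int_0^t \bar\EE\bigl[\|X^\varepsilon_s\|_\infty^2\bigr]\,ds + \int_0^t \bar\EE\bigl[|Y^\varepsilon(s)|^2\bigr]\,ds\Bigr).
\end{equation*}
Substituting the bound from the previous step and writing $\Phi(t):=\bar\EE[\sup_{-\tau\leq u\leq t}|X^\varepsilon(u)|^2]$, one obtains $\Phi(t)\leq K(1+\int_0^t \Phi(s)\,ds)$ with $K$ independent of $\varepsilon$; Gronwall's lemma then closes the estimate for $X^\varepsilon$ uniformly in $\varepsilon$, and feeding this back into the first-step bound yields the uniform control of $\sup_{0\leq t\leq T}\bar\EE[|Y^\varepsilon(t)|^2]$, completing (\ref{eq: a priori bound slow-fast variable}).

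The only delicate point is the first step: one must carefully verify that the quadratic variation of the compensated Poisson integral in It\^{o}'s formula contributes $\int_\XX |h(X^\varepsilon_s, Y^\varepsilon(s),z)|^2\,(1/\varepsilon)\nu(dz)$, which combines with the Brownian quadratic variation $|g|^2/\varepsilon$ and the drift $(2/\varepsilon)\langle Y^\varepsilon,f\rangle$ to reproduce exactly the left-hand side of (\ref{eq: dissipativity on the coefficients}) multiplied by $1/\varepsilon$. This absorption is what engineers the decay rate $-\beta_1/\varepsilon$ that dominates the $1/\varepsilon$ drift amplification and is the sole mechanism preventing a blow-up of the fast variable as $\varepsilon\to 0$.
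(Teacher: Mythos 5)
Your argument is correct, and it is the standard one the authors have in mind — the paper explicitly omits the proof, stating these a-priori estimates are straightforward. Both key mechanisms are handled properly: It\^{o}'s formula on $|Y^\varepsilon|^2$ does make the $2\langle Y^\varepsilon, h\rangle$ terms from the jump part and the compensator cancel in expectation, leaving exactly $\frac{1}{\varepsilon}\bigl(2\langle Y^\varepsilon,f\rangle + |g|^2 + \int_\XX|h|^2\,\nu(dz)\bigr)$ to which (\ref{eq: dissipativity on the coefficients}) applies, and the convolution $\frac{1}{\varepsilon}\int_0^t e^{-\beta_1(t-s)/\varepsilon}\,ds\le\beta_1^{-1}$ gives the $\varepsilon$-uniform bound; meanwhile the prefactors $\sqrt{\varepsilon}$ and $\varepsilon$ on the stochastic integrals in $X^\varepsilon$, combined with BDG and the intensity $\frac{1}{\varepsilon}\nu$ (and the finiteness of $\int_\XX|z|^2\,\nu(dz)$ from Hypothesis \ref{condition: the measure}), give martingale contributions of order $\varepsilon\le 1$. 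Two minor points worth flagging if you write this out in full: (i) to justify dropping the martingale terms after It\^{o}'s formula you should localize with stopping times and pass to the limit by Fatou, rather than assume integrability a priori; (ii) the segment-norm $\|X^\varepsilon_s\|_\infty$ on $[0,t]$ sweeps up $\sup_{-\tau\le v\le s}|X^\varepsilon(v)|$, so the quantity you Gronwall is indeed $\Phi(t)=\bar\EE[\sup_{-\tau\le u\le t}|X^\varepsilon(u)|^2]$ and the initial segment contributes $\|\chi\|_\infty^2$; you note both, so the closing of the estimate is fine.
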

\no We consider the equation for the fast variable of (\ref{eq: the multiscale SDE}) whenever the slow component is frozen and given by  $\zeta \in \dD$ in the regime $\varepsilon=1$, i.e. fix $y \in \RR^k$; for every $t \geq 0$ let
\begin{align} \label{eq: the SDE for the fast variable with frozen slow component}
Y^{\zeta,y}(t) = y + \int_0^t f(\zeta, Y^{\zeta,y}(s))ds + \int_0^t g(\zeta, Y^{\zeta,y}(s)) dB^2(s) + \int_0^t \int_{\XX} h(\zeta, Y^{\zeta,y}(s-),z) \tilde N^{\frac{1}{1}}(ds,dz).
\end{align}
\no We assume that Hypotheses \ref{condition: the measure}-\ref{condition: dissipativity} hold. We follow closely \cite{Cerrai, Cerrai2} in the argumentation below.\\
Fixed $\zeta \in \dD$ we define the transition semigroup on the space $\BB_b(\RR^k)$ of the bounded measurable functions associated with the jump difusion defined by the strong solution of (\ref{eq: the SDE for the fast variable with frozen slow component}) by
\begin{align} \label{eq: the transition semigroup for the fast variable}
P^\zeta_t f(y) := \bar \EE [f(Y^{\zeta,y}(t))], \quad t \geq 0, \quad y \in \RR^k.
\end{align}
\no In what follows we discuss the existence and uniqueness of an invariant measure for the family of linear operators $(P^\zeta_t)_{t \geq 0}$, i.e. a probability measure $\mu^\zeta \in \pP(\RR^k, \bB(\RR^k))$ such that
\begin{align} \label{eq: invariant measure}
\int_{\RR^k} P^\zeta_t f(y) \mu^\zeta(dy) = \int_{\RR^k} f(y) \mu^\zeta(dy), \quad t \geq 0, f \in \BB_b(\RR^k)
.\end{align}
\no The dissipativity assumption given in (\ref{eq: dissipativity Lipschitz on the coefficients}) yields some $\cC>0$ such that, for any $T_0 \geq 0$, the following bound holds:
\begin{align} \label{eq: uniform bound for tightness}
\displaystyle \sup_{T \geq T_0} \bar \EE[|Y^{\zeta,y}(T)|^2] \leq \cC  e^{- 2 \beta_1 T} (1 + ||\zeta||_\infty^2 + |y|^2).
\end{align}  
\no The estimate (\ref{eq: uniform bound for tightness}) implies that the family of the laws of the process $\{ \lL(Y^{\zeta,y}(T))\}_{T \geq T_0}$ is tight in $\pP(\RR^k; \bB(\RR^k))$ when $T_0 \ra \infty$. Prokhorov's theorem implies the existence of a weak limit $\mu^\zeta$ as $T_0 \ra \infty$ and an indirect use %\marginpar{point16R1} 
of Krylov-Bogliobov's theorem (Theorem 7.1 in \cite{DaPrato}) asserts that $\mu^\zeta$ is an invariant measure of $(P^\zeta_t)_{t \geq 0}$, in the sense of (\ref{eq: invariant measure}).The setting of asumptions made in Hypotheses \ref{condition: the measure}-\ref{condition: dissipativity} imply that the semigroup  $(P^\zeta_t)_{t \geq 0}$ is irreducible. We refer the reader to Proposition 2.4 in \cite{Qiao}. Proposition 7.5 in \cite{DaPrato} implies that $\mu^\zeta$ is the unique invariant measure.  Due to the estimate (\ref{eq: uniform bound for tightness}) and the definition of $\mu^\zeta$ in (\ref{eq: invariant measure}), the simple application of monotone convergence shows, as in Lemma 3.4. in \cite{Cerrai2}, that there exists $C>0$ such that 
\begin{align} \label{eq: second moment invariant measure}
\int_{\RR^k} |y|^2 \mu^\zeta(dy)  \leq C (1 + ||\zeta||^2_\infty + |y|^2).
\end{align} 
\no For any $\zeta \in \dD$ we can define the averaged mixing coefficient
\begin{align} \label{eq: averaged coefficient}
\bar a(\zeta) := \int_{\RR^k} a(\varphi,y) \mu^\zeta (dy).
\end{align}
\no The proof of the following result concerning the Lipschitz continuity of $\bar a$ is straightforward. It follows in the same way the inequality (3.4) in \cite{Xu2018}.
\begin{proposition} \label{proposition: bar a is Lipschitz continuous}
Fix $T,\tau >0$ and $y \in \RR^k$. Let Hypothesis \ref{condition: the measure}-\ref{condition: dissipativity} hold for some $\nu \in \MM$ and $\chi \in \cC$. Then the function $\bar a$ defined by (\ref{eq: averaged coefficient}) is Lipschitz continuous. 
\end{proposition}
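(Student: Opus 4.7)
The plan is to establish the Lipschitz estimate by a coupling argument for the frozen-slow fast equation (2.19), combined with the identification of $\bar a(\zeta)$ as the ergodic long-time limit of $\bar\EE[a(\zeta, Y^{\zeta,y}(t))]$. Fix arbitrary $\zeta_1, \zeta_2 \in \dD$ and $y \in \RR^k$, and let $Y_i(t) := Y^{\zeta_i, y}(t)$, $i=1,2$, be realized on the same stochastic basis using the same Brownian motion $B^2$ and the same compensated Poisson random measure $\tilde N^{1/1}$. Applying It\^o's formula to $t \mapsto |Y_1(t) - Y_2(t)|^2$, taking expectations (the local martingale part is a true martingale by the a priori moments and Hypothesis 2.5.1), and decomposing the drift and noise increments as $f(\zeta_1, Y_1) - f(\zeta_2, Y_2) = [f(\zeta_1, Y_1) - f(\zeta_1, Y_2)] + [f(\zeta_1, Y_2) - f(\zeta_2, Y_2)]$ (and similarly for $g$ and $h$), I would combine Hypothesis~2.5, the Lipschitz estimate (2.8), and Young's inequality $2ab \leq \eta a^2 + \eta^{-1} b^2$ to derive
\begin{equation*}
\tfrac{d}{dt}\bar\EE[|Y_1(t)-Y_2(t)|^2] \leq -\tilde\beta_1 \bar\EE[|Y_1(t)-Y_2(t)|^2] + \tilde\beta_2 \|\zeta_1 - \zeta_2\|_\infty^2,
\end{equation*}
for constants $\tilde\beta_1, \tilde\beta_2 > 0$ (with $\tilde\beta_1 > 0$ obtained by choosing $\eta$ small enough so that the positive Lipschitz cross-terms do not overwhelm the negative drift from (2.18)). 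Gronwall's lemma then yields the uniform-in-time bound
\begin{equation*}
\sup_{t \geq 0} \bar\EE[|Y_1(t)-Y_2(t)|^2] \leq \tfrac{\tilde\beta_2}{\tilde\beta_1} \|\zeta_1 - \zeta_2\|_\infty^2.
\end{equation*}

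Next, I would verify that for every fixed $\zeta \in \dD$ and $y \in \RR^k$,
\begin{equation*}
\lim_{t \to \infty} \bar\EE[a(\zeta, Y^{\zeta, y}(t))] = \bar a(\zeta).
\end{equation*}
The tightness argument preceding the statement (via (2.20) and Prokhorov) together with the uniqueness of $\mu^\zeta$ given by Krylov--Bogoliubov implies weak convergence $\mathrm{Law}(Y^{\zeta,y}(t)) \Rightarrow \mu^\zeta$ as $t \to \infty$. The uniform bound $\sup_t \bar\EE[|Y^{\zeta,y}(t)|^2] < \infty$ from (2.20), combined with the sublinear growth of $a$ (Remark 2.1), gives uniform integrability of $\{a(\zeta, Y^{\zeta,y}(t))\}_{t \geq 0}$, upgrading weak convergence to convergence in $L^1$ of the relevant functionals.

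Putting the two pieces together with the Lipschitz continuity of $a$ in its first two arguments,
\begin{align*}
|\bar a(\zeta_1) - \bar a(\zeta_2)| &= \lim_{t \to \infty} \bigl|\bar\EE[a(\zeta_1, Y_1(t)) - a(\zeta_2, Y_2(t))]\bigr| \\
&\leq L \|\zeta_1 - \zeta_2\|_\infty + L \sup_{t \geq 0} \bar\EE[|Y_1(t) - Y_2(t)|] \\
&\leq L \Bigl(1 + \sqrt{\tilde\beta_2/\tilde\beta_1}\,\Bigr) \|\zeta_1 - \zeta_2\|_\infty,
\end{align*}
which is the desired Lipschitz bound with an explicit constant independent of $y$.

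The main obstacle I foresee is the derivation of the uniform-in-time contraction estimate at the first step: the dissipativity in Hypothesis~2.5 is stated for a single frozen segment, and one must transfer it to the two-slow-component setting while controlling the jump term in $h$ carefully. Condition (2.18) is tailor-made for the drift contribution $2\langle Y_1 - Y_2, f(\zeta_1, Y_1) - f(\zeta_2, Y_2)\rangle$, but the diffusion and jump quadratic variations must be split via Young's inequality and then absorbed into the negative $-\beta_1|Y_1-Y_2|^2$ term from (2.17); writing all the splittings in a way that simultaneously produces a strictly negative coefficient on $|Y_1 - Y_2|^2$ and a finite constant multiplying $\|\zeta_1 - \zeta_2\|_\infty^2$ is the genuinely technical piece of the argument.
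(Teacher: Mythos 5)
Your proposal and the paper's proof both begin with the same coupling: realize $Y^{\zeta_1,y}$ and $Y^{\zeta_2,y}$ on the same stochastic basis driven by the same $B^2$ and the same compensated Poisson random measure, and estimate $\bar\EE[|Y^{\zeta_1,y}(t)-Y^{\zeta_2,y}(t)|^2]$. From there the two arguments diverge. The paper applies It\^o's isometry together with the Lipschitz conditions (\ref{eq: condition-Lipschitz}) and a plain Gronwall lemma to obtain the finite-horizon bound $\bar\EE[|Y^{\zeta_1,y}(t)-Y^{\zeta_2,y}(t)|^2]\le \cC(L,T)\,\|\zeta_1-\zeta_2\|_\infty^2$ on $[0,T]$, and then concludes via Jensen's inequality and the invariance identity (\ref{eq: invariant measure}) at a fixed time $t$. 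You instead invoke the dissipativity Hypothesis \ref{condition: dissipativity} and a Young-inequality absorption to derive a time-uniform contraction $\sup_{t\ge 0}\bar\EE[|Y_1(t)-Y_2(t)|^2]\le(\tilde\beta_2/\tilde\beta_1)\,\|\zeta_1-\zeta_2\|_\infty^2$, and then pass to the ergodic limit $t\to\infty$, identifying $\bar a(\zeta)=\lim_{t\to\infty}\bar\EE[a(\zeta,Y^{\zeta,y}(t))]$ through tightness of the marginal laws upgraded to convergence of the expectations by uniform integrability (sublinear growth of $a$ plus the uniform second-moment bound). What your route buys is a Lipschitz constant manifestly independent of the horizon $T$ and of $y$, and a fully explicit treatment of the long-time limit, whereas the paper's Gronwall route is shorter but produces a $T$-dependent constant that coexists uneasily with the invariance step invoked at fixed $t$. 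The cost is exactly the one you flag: the contraction bound does not follow immediately from (\ref{eq: dissipativity Lipschitz on the coefficients}) or (\ref{eq: dissipativity- the last one required }), which are stated for a single frozen segment; one must split drift, diffusion, and jump increments into same-segment and cross-segment pieces, apply the dissipativity to the former and (\ref{eq: condition-Lipschitz}) to the latter, and then absorb cross terms by Young's inequality without destroying the sign of the $-\beta_1|Y_1-Y_2|^2$ term. That bookkeeping is the genuinely technical step that a complete write-up of your alternative proof would need to supply; the paper sidesteps it by settling for a $T$-dependent Gronwall constant.
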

\no Proposition \ref{proposition: bar a is Lipschitz continuous} ensures that the averaged differential equation with initial delay data $\chi \in \cC$,
\begin{align} \label{eq: the averaged ode}
\begin{cases}
\frac{d}{dt} \bar X^{0, \chi}(t) &= \bar a(\bar X^{0, \chi}_t), \\
\bar X^{0, \chi}_0 &= \chi 
\end{cases}
\end{align}
has a unique solution $\bar X^{0, \chi} \in \CC([-\tau,T]; \RR^d)$. \\
%\marginpar{\cred{both prop do not follow proof in next version-only in report}}
\no The following proposition, that reads as a strong mixing property of the averaged coefficient $\bar a$ given by (\ref{eq: averaged coefficient}), plays a crucial role in the establishment of the moderate deviations principle for the family $(X^\varepsilon)_{\varepsilon>0}$ since it is a fundamental ingredient in the proof of the controlled averaging principle (\ref{eq: the controlled averaging principle int}).  The derivation of this ergodic property follows as it is done in Lemma 5.2 of \cite{Xu}.
\begin{proposition} \label{proposition: the averaging property for the averaged coefficient}
Fix $T,\tau >0$ and $y \in \RR^k$. Let Hypothesis \ref{condition: the measure}-\ref{condition: dissipativity} hold for some $\nu \in \MM$ and $\chi \in \cC$.Then there exists some function $\alpha:[0, \infty) \longrightarrow [0, \infty)$ such that $\alpha(T) \ra 0$ as $T \ra \infty$ and satisfying for any $t \in [0,T]$
\begin{align} \label{eq: mixing property of the averaged coefficient}
\bar \EE  \Big | \frac{1}{T} \int_t^{t+T} a(\zeta,Y^{\zeta,y}(s)) ds - \bar a(\zeta) \Big |^2 \leq \alpha(T) (1+ ||\zeta||^2_\infty + |y|^2)
\end{align}
where the averaged coefficient $\bar a $ is defined by (\ref{eq: averaged coefficient}).
\end{proposition}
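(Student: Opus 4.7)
The plan is to bound the squared $L^2$-norm by expanding it as a double time integral of a covariance and then exploit exponential ergodicity of the frozen-slow semigroup $P^\zeta_t$ together with the second-moment estimate (\ref{eq: uniform bound for tightness}). Writing $\varphi(s):=a(\zeta,Y^{\zeta,y}(s))-\bar a(\zeta)$, one has
\begin{align*}
\bar\EE\Big|\tfrac1T\!\int_t^{t+T}\!\!\!a(\zeta,Y^{\zeta,y}(s))\,ds-\bar a(\zeta)\Big|^2 =\tfrac{2}{T^2}\!\int_t^{t+T}\!\!\int_s^{t+T}\!\bar\EE[\langle\varphi(s),\varphi(r)\rangle]\,dr\,ds.
\end{align*}
For $r\geq s$ the Markov property of the frozen-slow jump diffusion $Y^{\zeta,\cdot}$ gives $\bar\EE[a(\zeta,Y^{\zeta,y}(r))\mid\fF_s]=(P^\zeta_{r-s}a(\zeta,\cdot))(Y^{\zeta,y}(s))$, so conditioning on $\fF_s$ and applying Cauchy--Schwarz reduces the estimate to controlling, for every $y'\in\RR^k$ and $u\geq 0$,
\begin{align*}
\big|(P^\zeta_u a(\zeta,\cdot))(y')-\bar a(\zeta)\big|^{2}\leq K\,e^{-\beta_1 u}\bigl(1+\|\zeta\|_\infty^{2}+|y'|^{2}\bigr),
\end{align*}
uniformly in $\zeta\in\dD$, and then plugging back $y'=Y^{\zeta,y}(s)$ and invoking (\ref{eq: uniform bound for tightness}) to bound $\bar\EE|Y^{\zeta,y}(s)|^{2}\leq C(1+\|\zeta\|_\infty^{2}+|y|^{2})$.

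The main step is therefore the exponential contraction of $P^\zeta_u$ towards the averaged value $\bar a(\zeta)$. To obtain it, I would use (\ref{eq: dissipativity Lipschitz on the coefficients}): for two copies $Y^{\zeta,y_1}$, $Y^{\zeta,y_2}$ of the frozen-slow equation driven by the \emph{same} Brownian motion and Poisson random measure, It\^o's formula applied to $|Y^{\zeta,y_1}(u)-Y^{\zeta,y_2}(u)|^2$ gives
\begin{align*}
\bar\EE\bigl|Y^{\zeta,y_1}(u)-Y^{\zeta,y_2}(u)\bigr|^{2}\leq e^{-\beta_1 u}|y_1-y_2|^{2}.
\end{align*}
Writing the invariant measure as the $u\to\infty$ law of an independent copy with initial condition $\tilde y\sim\mu^\zeta$, Lipschitz continuity of $a(\zeta,\cdot)$ (from Hypothesis \ref{condition: assumptions for existence uniqueness solution}) and Jensen's inequality yield
\begin{align*}
\big|(P^\zeta_u a(\zeta,\cdot))(y')-\bar a(\zeta)\big|^{2}\leq L^{2}\!\!\int_{\RR^k}\!\!\bar\EE|Y^{\zeta,y'}(u)-Y^{\zeta,\tilde y}(u)|^{2}\,\mu^\zeta(d\tilde y)\leq L^{2}e^{-\beta_1 u}\!\!\int_{\RR^k}\!\!|y'-\tilde y|^{2}\mu^\zeta(d\tilde y),
\end{align*}
and the last integral is bounded by $C(1+\|\zeta\|_\infty^{2}+|y'|^{2})$ thanks to (\ref{eq: second moment invariant measure}).

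Finally I would insert this contraction bound into the double integral: together with $\bar\EE|\varphi(s)|^{2}\leq C(1+\|\zeta\|_\infty^{2}+|y|^{2})$ (using the linear growth (\ref{eq: sublinear growth of the coefficients}) of $a$ and (\ref{eq: uniform bound for tightness})), Cauchy--Schwarz gives
\begin{align*}
\bar\EE[\langle\varphi(s),\varphi(r)\rangle]\leq C\,e^{-\beta_1(r-s)/2}\bigl(1+\|\zeta\|_\infty^{2}+|y|^{2}\bigr),
\end{align*}
whence
\begin{align*}
\tfrac{2}{T^2}\!\int_0^{T}\!\!\int_0^{T-s}\!\!\!Ce^{-\beta_1 u/2}\bigl(1+\|\zeta\|_\infty^{2}+|y|^{2}\bigr)du\,ds\leq\tfrac{C'}{T}\bigl(1+\|\zeta\|_\infty^{2}+|y|^{2}\bigr),
\end{align*}
so the conclusion holds with $\alpha(T)=C'/T\to 0$. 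The only delicate point is establishing the coupling estimate with the sharp exponential rate $\beta_1$, which is where the full strength of the dissipativity condition (\ref{eq: dissipativity Lipschitz on the coefficients}) (covering drift, diffusion \emph{and} jump parts simultaneously) is used; the remaining manipulations are standard.
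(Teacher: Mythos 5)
Your proof is correct and in fact closes a gap that the paper's own argument glosses over. Both proofs share the same contraction ingredient: It\^o's formula plus the dissipativity hypothesis (\ref{eq: dissipativity Lipschitz on the coefficients}) give $\bar\EE|Y^{\zeta,y_1}(t)-Y^{\zeta,y_2}(t)|^2\lesssim e^{-\beta_1 t}(\cdots)$, whence a \emph{first-moment} decay $|P^\zeta_u a(\zeta,\cdot)(y')-\bar a(\zeta)|^2\lesssim e^{-\beta_1 u}(1+\|\zeta\|_\infty^2+|y'|^2)$. The paper then passes to the time-averaged $L^2$ estimate by a single application of Jensen, $\bar\EE|\tfrac1T\int\varphi|^2\le\tfrac1T\int\bar\EE|\varphi(s)|^2\,ds$, and inserts the first-moment bound as though it controlled $\bar\EE|\varphi(s)|^2$ — but $\bar\EE|a(\zeta,Y^{\zeta,y}(s))-\bar a(\zeta)|^2$ does not decay in $s$ (it tends to the variance of $a(\zeta,\cdot)$ under $\mu^\zeta$), so this step as written does not close. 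Your alternative route — expanding the variance of the time average as a double integral of covariances $\tfrac{2}{T^2}\iint\bar\EE[\langle\varphi(s),\varphi(r)\rangle]$, conditioning on $\fF_s$ via the Markov property so that the first-moment decay applies through $P^\zeta_{r-s}$, then Cauchy--Schwarz and the second-moment a priori bounds — is exactly what is needed to deploy the first-moment contraction correctly, and it delivers the same $\alpha(T)=C/T$ conclusion. One caveat applies to both proofs: (\ref{eq: dissipativity Lipschitz on the coefficients}) as stated carries a residual $\beta_2\|\zeta\|_\infty^2$ on the right-hand side, which under Gronwall yields a persistent (non-decaying) $\|\zeta\|^2$ term, not the pure exponential contraction $e^{-\beta_1 u}|y_1-y_2|^2$ you assert; the paper makes the same implicit absorption. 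For the mixing estimate to hold as claimed, that $\beta_2\|\zeta\|_\infty^2$ term must in effect not be there (most likely a typo in the hypothesis), and you should flag that your Step 1 inequality uses this stronger form.
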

 
\subsection{The main theorem}
\no We make the further assumption on the averaged coefficient $\bar a$ defined by (\ref{eq: averaged coefficient}).
\begin{condition} \label{condition: condition coeffi MDP- differentiablity}
The function $\bar a: \dD \longrightarrow \RR^d$ is Fr\'{e}chet differentiable and is its Fr\'{e}chet derivative is a Lipschitz function, i.e. there exists some constant $L_2>0$ such that
\begin{align} \label{eq: averaged coefficient derivative is Lipschitz}
|D \bar a(\zeta) - D \bar a (\bar \zeta)| &\leq L_2 \Big (\displaystyle \sup_{- \tau \leq t \leq 0} |\zeta(t) - \bar \zeta(t)| \Big ), \quad \zeta, \bar \zeta \in \dD.
\end{align}
\end{condition}
%\marginpar{\cred{remark was erased. it will be contained expl. in the report}}
%\begin{rem} Let us assume that $\mu^\zeta(dy)= \rho(\zeta)\mu(dy)$ for some density function $\rho: \dD \longrightarrow [0,\infty)$ that is $C^1$. Therefore if $a$ and $\rho$ are $C^1$-Frechet differentiable with respect to $\zeta$ and $\rho$ it follows that $D \bar a$ is $C^1$-Fr\'{e}chet differentiable.
%\end{rem}
\no We define $L^2(\nu_T):= \Big \{ g:[0,T] \times \XX \longrightarrow [0,\infty) ~|~  \displaystyle \int_0^T \int_\XX |g(s,z)|^2 \nu(dz) ds < \infty \Big \}.$\\
\no The main result of this work is the content of the next theorem and the reader can find its proof in the next section.
\begin{theorem} \label{theorem: MDP for the slow component }
Fix $T,\tau >0$ and $y \in \RR^k$. Let Hypothesis \ref{condition: the measure}-\ref{condition: condition coeffi MDP- differentiablity} hold for some $\nu \in \MM$ and $\zeta \in \cC$. Let 
$$\gG^0: L^2([0,T]; \RR^d) \times L^2(\nu_T) \longrightarrow \CC([-\tau,T]; \RR^d)$$ such that
\begin{align*}
\gG^0(f,g)= \eta,
\end{align*}
where for every $(f,g) \in L^2([0,T]; \RR^d) \times L^2(\nu_T)$ the function $\eta \in \CC([- \tau,T]; \RR^d)$ solves uniquely the skeleton equation
\begin{align} \label{eq: the skeleton equation}
\begin{cases}
\eta (t)&= \displaystyle \int_0^t D \bar a(\bar X^{0,\zeta}_s) \eta_s ds + \int_0^t \cb{\sigma}(\bar X^{0,\zeta}_s) f(s) ds + \int_0^t \int_\XX c(\bar X^{0, \zeta,}_{s},z) g(s,z) \nu(dz)ds, t \in [0,T] \\
\eta_0&=0.
\end{cases}
\end{align}
and the function $\bar X^{0,\chi} \in C([-\tau,T];\RR^d)$ is the unique solution of (\ref{eq: the averaged ode}).\\

For any $\eta \in C([-\tau,T];\RR^d)$ we denote $$\gG_\eta^0:=\Big  \{ (f,g) \in L^2[0,T] \times L^2 (\nu_T) ~|~\gG^0(f,g)= \eta \Big \}.$$ For any $\varepsilon>0$ let $a(\varepsilon)= \varepsilon^{\frac{1-\theta}{2}}$, for some $\theta \in \Big ( \frac{1}{2}, 1\Big )$.\\

For every $\varepsilon>0$ let $(X^{\varepsilon,\chi,y}(t), Y^{\varepsilon,\chi,y}(t))_{t \in [-\tau,T]}$ be the unique strong solution of (\ref{eq: the multiscale SDE}) with initial condition given by (\ref{eq: the multisclae SDE initial datum}) and
 \begin{align} \label{eq: the deviations }
Z^{\varepsilon, \chi,y}:= \frac{X^{\varepsilon, \chi, y} - \bar X^{0, \chi,y}(t)}{a(\varepsilon)}.
\end{align}
The family $(Z^{\varepsilon, \chi,y})_{\varepsilon>0}$  defined by (\ref{eq: the deviations }) satisfies a large deviations principle with speed $b(\varepsilon) = \varepsilon^\theta \ra 0$ as $\varepsilon \ra 0$ for some $\theta \in \Big ( \frac{1}{2}, 1 \Big )$ and the good rate function
\begin{align} \label{eq: the good rate function MDP}
\II (\eta) = \displaystyle \inf_{(f,g) \in \gG_\eta^0 } \frac{1}{2}  \Big (\int_0^T |f(s)|^2 ds + \int_0^T \int_\XX |g(s,z)|^2 \nu(dz)ds \Big ).
\end{align}
with the convention that the $\inf \emptyset= \infty$. \\
\end{theorem}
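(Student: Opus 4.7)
The plan is to apply the abstract sufficient condition for moderate deviations principles in the weak convergence framework, namely Theorem 9.9 in \cite{Budhiraja book}. Under that criterion, the MDP for $(Z^{\varepsilon,\chi,y})_{\varepsilon>0}$ with speed $b(\varepsilon)=\varepsilon^\theta$ and rate function $\II$ reduces to two items: first, well-posedness and appropriate continuity of the skeleton map $\gG^0$ defined in (\ref{eq: the skeleton equation}), which follows by a Gr\"onwall argument exploiting the Lipschitz continuity of $D\bar a$ from Hypothesis \ref{condition: condition coeffi MDP- differentiablity}, the sublinear growth in Remark 2.1, and the exponential integrability (\ref{eq: integrability condition measure}); second, the identification that for any $M>0$ and any admissible controls $(\xi^\varepsilon, \psi^\varepsilon)$ with $\int_0^T |\xi^\varepsilon(s)|^2 ds \leq M a^2(\varepsilon)$ and the entropy bound on $\varphi^\varepsilon$, if $(\xi^\varepsilon, \psi^\varepsilon \textbf{1}_{\{|\psi^\varepsilon| \leq \beta/a(\varepsilon)\}})$ converges in law to $(\xi,\psi)$ in the weak $L^2$-topologies, then the controlled deviations $\zZ^\varepsilon := (\xX^\varepsilon - \bar X^{0,\chi})/a(\varepsilon)$ associated to the system (\ref{eq: Khasminki- controlled X variable})--(\ref{eq: Khasminkii- controlled Y variable}) converge in distribution to $\bar \zZ := \gG^0(\xi,\psi)$.

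To accomplish the second item, the central reduction is to establish the tightened controlled averaging principle (\ref{eq: the controlled averaging principle int}), which by Slutsky's theorem replaces the weak limit of $\zZ^\varepsilon$ by the weak limit of $\bar \zZ^\varepsilon := (\bar \xX^\varepsilon - \bar X^{0,\chi})/a(\varepsilon)$, where $\bar \xX^\varepsilon$ in (\ref{eq: Khasminkii- controlled averaged variable}) is decoupled from the fast dynamics. I would then prove $\bar \zZ^\varepsilon \Rightarrow \bar \zZ$ through classical weak convergence: obtain a second moment bound for $\bar \zZ^\varepsilon$ via a localization truncation combined with Bernstein's inequality for c\`adl\`ag local martingales from \cite{DZ01} Theorem 3.3, which together with tightness of the laws of the stochastic integrals in $\DD([-\tau,T];\RR^d)$ yields relative compactness; identify the limit by Skorohod representation, using a first-order Taylor expansion of $\bar a$ centered at $\bar X^{0,\chi}$ together with the Lipschitz estimate (\ref{eq: averaged coefficient derivative is Lipschitz}) to handle the linearized drift term $D\bar a(\bar X^{0,\chi}_s)\eta_s$, and using weak convergence of the controls in their respective $L^2$-spaces to pass to the limit in the $\sigma$ and $c$ contributions, with the well-posedness of the skeleton equation (\ref{eq: the skeleton equation}) closing the argument.

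The main obstacle is the tightened controlled averaging principle (\ref{eq: the controlled averaging principle int}), which is considerably more subtle here than in the standard Khasminskii setting because of the segment-process dependence and the nonlocal jump integrals. I plan to partition $[-\tau,T]$ into blocks of length $\Delta(\varepsilon) \to 0$, chosen so that the dissipativity of the fast equation (\ref{eq: dissipativity on the coefficients})--(\ref{eq: dissipativity- the last one required }) together with the mixing estimate (\ref{eq: mixing property of the averaged coefficient}) guarantees relaxation of $\yY^\varepsilon$ to the frozen-segment invariant regime on each block, while the slow segment changes little. On this grid I would build auxiliary discretizations $\hat \xX^\varepsilon, \hat \yY^\varepsilon$, where $\hat \xX^\varepsilon$ freezes the segment argument to the left endpoint of each block in a way that remains stable in the sup-norm on $[-\tau,0]$, which requires the Lipschitz regularity of $\chi$ from Hypothesis \ref{condition: initial delay is Lipschitz} and a careful treatment of the nonlocal $\nu$-integrals. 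The estimate then splits asymmetrically: for the fast component I would bound $\bar\EE|\yY^\varepsilon(t) - \hat\yY^\varepsilon(t)|^2$ by an It\^o-plus-dissipativity computation using the $L^2$-constraints on the controls, while for the slow component I would directly estimate the tail probability $\bar \PP(\sup_{t \leq T} |\xX^\varepsilon(t) - \hat\xX^\varepsilon(t)| > \delta a(\varepsilon))$, since propagating a mere second-moment bound through the segment norm would be lossy. Combining these two-sided estimates with (\ref{eq: mixing property of the averaged coefficient}) block by block and a Gr\"onwall step on the segment differences yields (\ref{eq: the controlled averaging principle int}), completing the proof.
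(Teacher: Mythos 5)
Your proposal follows the same route as the paper: Theorem~9.9 of \cite{Budhiraja book}, the skeleton-map compactness step via Gr\"onwall and Arzel\`a--Ascoli, the Slutsky reduction through the controlled averaging principle, localization plus Bernstein and a Taylor expansion of $\bar a$ for identifying the weak limit of $\bar\zZ^\varepsilon$, and a Khasminskii block discretization with the same asymmetric treatment of the two components (second-moment bound for $\yY^\varepsilon-\hat\yY^\varepsilon$, tail-probability bound for the slow segment differences, with case analysis on the grid indices using the Lipschitz initial segment). This is essentially the argument the paper gives.
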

\subsection{Examples} \label{subsection: examples}
\paragraph*{Strongly tempered exponentially light L\'{e}vy measures.} Hypothesis \ref{condition: the measure} covers a wide class of L\'{e}vy measures and we point out the following special benchmark cases. 
\begin{enumerate}
\item Our setting covers the simplest case of finite intensity super-exponentially light jump measures given by $\nu(dz)= e^{-\alpha |z|^2}$ for some $\alpha>1$. For every $\varepsilon>0$ the corresponding stochastic process $L^\varepsilon_t := \int_0^t \int_\XX z \tilde N^{\frac{1}{\varepsilon}}(ds,dz)$, $t \geq 0$ is a compensated compound Poisson process.
\item More generally Hypothesis \ref{condition: the measure} covers a class of L\'{e}vy measures that mimics the class of strongly tempered exponentially light measures introduced by Rosi\'{n}ski in \cite{Ros07}, however, with a Gaussian damping in order to satisfy (\ref{eq: integrability condition measure}). For the polar coordinate $r=|z|$ and any $A \in \bB(\XX)$ we define
\begin{align*}
\nu(A) = \int_{\RR^d \backslash \{0\}} \int_0^\infty \textbf{1}_{A}(rz) \frac{e^{- r^2}}{r^{\alpha'+1}}dr R(dz), \quad \alpha' \in (0,2),
\end{align*}
for some measure $R \in \MM$ such that $\int_{\RR^d \backslash \{0\}} |z|^{\alpha'} R(dz)< \infty$. We point out that, for every $\varepsilon>0$, the corresponding L\'{e}vy process $(L^\varepsilon_t)_{t \geq 0}$ differs from the compound Poisson process of the paragraph before not only from the fact that the corresponding jump measure has infinite total mass but also from the fact that although a compound Poisson process with positive jumps has almost surely nondecreasing paths, it does not have paths that are almost surely strictly increasing.  Such measures and its corresponding processes were introduced in \cite{OliveiraHogele} for the study of dynamical features of stochastic equations perturbed by jump accelerated noises obeying the large deviations regime.
\end{enumerate}
 %\marginpar{\textbf{new sentence}}
 \paragraph*{Invariant measures for the Markov semigroup associated to the fast variable.}
 \begin{enumerate}
 \item For every $\varepsilon>0$ and $t \in [0,T]$ let us consider the multiscale system
 \begin{align*}
 \begin{cases}
 d X^\varepsilon(t)&= a(X^\varepsilon_t, Y^\varepsilon(t))dt + \sqrt{\varepsilon} \sigma(X^\varepsilon_t) dB^1(t), \quad X^\varepsilon_0 = \zeta \in \cC \\
 d Y^\varepsilon(t) &= - \frac{1}{2 \varepsilon} Y^\varepsilon(t) + \frac{1}{\varepsilon} d B^2(t), \quad Y^\varepsilon(0)= y \in \RR,
 \end{cases}
 \end{align*}
 where $B^1$ and $B^2$ are two independent standard Brownian motions with values in $\RR$. We assume that the coefficients $a$ and $\sigma$ satisfy Hypotheses \ref{condition: assumptions for existence uniqueness solution} and \ref{condition: dissipativity}. For any $\chi \in \cC$ satisfying Hypothesis \ref{condition: initial delay is Lipschitz} the invariant measure of the fast variable (decoupled of the slow variable in this case) 
 \begin{align*}
 d Y(t)= - \frac{1}{2} Y(t) + dB^2(t), \quad t \geq 0,
\end{align*}
is given by $\mu(dy)= \frac{1}{\sqrt{2 \pi}} e^{- \frac{y^2}{2}}dy$. Hence the averaged coefficient $\bar a$ is given for any $\zeta \in \dD$ by
 \begin{align*}
\bar a(\zeta)= \frac{1}{\sqrt{2 \pi}} \int_\RR a(\zeta,y) e^{- \frac{y^2}{2}}dy.
  \end{align*}
 The function $\bar a$ satisfies Hypothesis \ref{condition: condition coeffi MDP- differentiablity} if $a$ is $C^1$-Fr\'{e}chet differentiable with respect to the first variable $\zeta$.
 
 \item Fix $T,\tau >0$ and $y \in \RR^k$. Let Hypothesis \ref{condition: the measure}-\ref{condition: condition coeffi MDP- differentiablity} hold for some $\nu \in \MM$ and $\chi \in \cC$. 
 For every $\varepsilon>0$ and $t \geq 0$ let us consider the multiscale system (\ref{eq: the multiscale SDE}) with $d=k=1$. We take $f(\zeta,y)= - f_1(\zeta)y$ and $g(\zeta,y)= g(\zeta)$ for every $\zeta \in \dD$ and $y \in \RR^k$ with $f_1(\zeta)>0$ and $g(\zeta)>0$ for any $\zeta \in \dD$. Fix the L\`{e}vy measure $\nu(dz)=e^{-|z|^2}dz$ and since this is a finite measure we consider the non-compensated Poisson random measure $N^{\frac{1}{\varepsilon}}$ instead of $\tilde N^{\frac{1}{\varepsilon}}$. Fixed $\zeta \in \dD$, the Markov semigroup  of the the fast variable governed by the dynamics
 \begin{align*}
 dY^{\zeta,y}(t)= - f_1(\zeta) Y^{\zeta,y}(t) + g(\zeta) dB^2(t) + \int_{\RR \backslash \{ 0\}} \Big (\frac{g(\zeta)}{\sqrt{f_1(\zeta)}} z - Y^{\zeta,y}(t) \Big ) d N^{1}(ds,dz), \quad t \geq 0,
 \end{align*}
 has a unique invariant distribution given by
 \begin{align*}
 \mu^{\zeta}(dy) = \sqrt{\frac{f_1(\zeta)}{\pi g^2(\zeta)}} e^{- \frac{f_1(\zeta) y^2}{g^2(\zeta)}} dy.
 \end{align*}
 The averaged coefficient $\bar a$, given for any $\zeta \in \dD$ by
\begin{align*}
\bar a(\zeta)= \int_{\RR \backslash \{0 \}} a(\zeta,y) \mu^{\zeta}(dy),
\end{align*}
satisfies Hypothesis \ref{condition: condition coeffi MDP- differentiablity} if $a,f$ and $g$ are $C^1$-Fr\'{e}chet differentiable in order to $\zeta$.  This example was inspired on the examples illustrated in \cite{Kumar17} and illustrates that the class of assumptions we make on the coefficients of (\ref{eq: the multiscale SDE}) is not empty. 
 \end{enumerate}

\section{Proof of the main theorem} \label{section: proof}
\no Through all this section let the standing assumptions made in Theorem \ref{theorem: MDP for the slow component } to hold.
Let
\begin{align} \label{eq:parametrization- the speed a}
a(\varepsilon)= \varepsilon^{\frac{1-\theta}{2}}, \varepsilon>0, \quad \text{for some}  \theta \in \Big ( \frac{1}{2},1 \Big ).  
\end{align} 
 The speed of the MDP  is given by $b(\varepsilon):= \frac{\varepsilon}{a^2(\varepsilon)}= \varepsilon^\theta \ra 0$, as $\varepsilon \ra 0$.
\subsection{The ansatz of the weak convergence approach} \label{section:ansatz}
 \paragraph*{Notation.} We follow extensively the notation introduced by Budhiraja, Dupuis and Ganguly in \cite{BDG15}. \\
\no Let $\bar \aA_{+}$ (resp. $\bar \aA$) be the class of all $(\bB(\XX) \otimes \bar \pP)/ \bB([0, \infty))$ (resp. $(\bB(\XX) \otimes \bar \pP) / \bB(\RR)$)- measurable maps from $[0,T] \times \XX \times \bar \VV$ to $[0, \infty)$ 
(resp. $\RR$). For $\varphi \in \bar \aA_+$ let us  define a counting process $N^\varphi$ on $\XX_T$ by
\begin{align} \label{eq: controlled random measure}
N^\varphi(U \times (0,t])(\bar \omega):= \int_U \int_0^t \int_0^\infty \textbf{1}_{[0, \varphi(x,s)(\bar \omega)]}(r) \bar N(dx, dr, ds), \quad t \in [0,T], U \in \bB(\XX).
\end{align}
 \no One can think of $N^{\varphi}$ as a controlled random measure with $\varphi$ selecting the intensity for the points at location $x$ and time $s$ in a possibly random but non-anticipating way.  When $\varphi(x,s,\bar m)= \theta \in (0, \infty)$ we write $N^\varphi= N^\theta$.  For more details we refer the reader to \cite{BDM11}.\\

\no  Define $\ell:[0, \infty) \longrightarrow [0, \infty)$ by
\begin{align*}
\ell(r)= r \ln r - r + 1, \quad r \in [0, \infty).
\end{align*}
\no For any $\varphi \in \bar \aA_+$ and $t \in [0,T]$ define the quantity
\begin{align*}
L_t(\varphi)(\bar \omega):= \int_0^t \int_\XX  \ell (\varphi(s,z,\bar \omega)) \nu(dz) ds .
\end{align*}
This is a well-defined quantity as an $[0,\infty]$-valued random variable. \\

\no Let $\{K_n\}_{n \in \NN} \subset \XX$ be an increasing sequence of compact sets such that $\bigcup_{n =1}^\infty K_n = \XX$. For each $n \in \NN$ let
\begin{align*}
\bar \aA_{b,n}:= \Big \{& \varphi \in \bar \aA_+ ~|~ \text{ for all } (t, \bar \omega) \in [0,T] \times \bar \VV \\
&  \varphi(t,x,\bar m) \in \Big [ \frac{1}{n}, n \Big ] \text{ if } x \in K_n \text{ and } \varphi(t,x,\bar m)=1 \text{ if } x \in K_n^c \Big \}
\end{align*}
and let $\bar \aA_b := \bigcup_{n \in \NN} \bar \aA_{b,n}$. Considering $\varphi$ as a control that perturbs jump rates away from $1$ when $\varphi \neq 1$ we see that the controls in $\bar \aA_b$ are bounded and perturb only off a compact set where the bounds of the set can depend on $\varphi$. \\

\no Consider now the space of random variables  
\begin{align*}
\pP_2:= \Big \{  \xi:  [0,T] \times \bar \VV \longrightarrow \RR^n ~|~\xi \text{ is } \bar \pP \otimes \bB(\RR^n) \text{ measurable such that } \int_0^T |\xi(s,\omega)|^2 ds < \infty \quad \bar \PP-\text{a.s.} \Big \}
\end{align*}
and set $ \uU= \pP_2 \times \bar \aA_+ $. \\

\no For $\xi \in \pP_2$ define
\begin{align*}
\tilde L_T(\xi)(\bar \omega):= \frac{1}{2} \int_0^T |\xi(\bar \omega,s)|^2 ds, \bar \omega \in \bar \VV.
\end{align*}
For a given random control $u=(\xi, \varphi) \in \uU$ define the energy $\bar L_T(u):=  \tilde L_T(\xi) + L_T(\varphi)$. \\
\no For any $M>0$ let
\begin{align*}
\tilde S^M := \{ f \in L^2( [0,T]; \RR^n) ~|~ \tilde L_T(f) \leq M \}.
\end{align*}
Under the $L^2$-weak topology $\tilde S^M$ is a compact subset of $L^2([0,T]; \RR^n)$. Throughout the rest of this work we consider $\tilde S^M$ endowed with this topology. Also let
\begin{align*}
S^M := \{ g: [0,T] \times \XX \longrightarrow [0, \infty) ~|~L_T(g) \leq M \}.
\end{align*}
For any $M>0$ and under the following identity,
\begin{align*}
S^M \simeq \Big \{ \nu^g_T \in \MM ~|~ \nu_T^g(A) := \int_{A} g(s,z) \nu(dz)ds, \quad A \in \bB([0,T] \times \XX) \Big \},
\end{align*}
when considering the vague topology in $\MM$
the space $S^M$ turns out to be compact. For more details we refer the reader to Lemma 5.1 in \cite{BCD13}.\\
\no For any $\varepsilon>0$ and $M>0$ let us consider the following tightned sublevel sets
\begin{align*}
S^{M}_{+, \varepsilon} &:= \Big  \{ g: [0,T] \times \XX  \longrightarrow [0, \infty) ~|~L_T(g) \leq M a^2(\varepsilon) \Big  \}, \\
S^M_\varepsilon &:= \Big \{h: [0,T] \times \XX  \longrightarrow \RR ~|~ h := \frac{g-1}{a(\varepsilon)}, \varphi \in S^{M}_{+, \varepsilon} \Big \}\\
\text{ and } \tilde S^M_{\varepsilon} &:= \Big \{ f: [0,T] \longrightarrow \RR^n ~|~ \tilde L_T(f) \leq M a^2(\varepsilon) \Big \}.
\end{align*}
\no Define also the random sublevel sets
\begin{align} \label{eq: the random sublevel sets}
\uU^M_{+, \varepsilon} &:=  \Big \{ \varphi \in \bar \aA_b ~|~ \varphi(.,.,\omega) \in S^M_{+, \varepsilon} \quad \bar \PP-a.s. \Big \}, \nonumber \\
\uU^M_{\varepsilon} &:= \Big  \{ \psi \in \bar \aA ~|~ \psi(.,.,\omega) \in S^M_{\varepsilon} \quad \bar \PP-a.s. \Big \} \nonumber \\
\text{ and } \tilde \uU^M_\varepsilon &: = \Big \{ \xi \in \pP_2 ~|~ \xi(.,\omega) \in \tilde S^M_\varepsilon \quad \bar \PP-a.s. \Big \}.
\end{align}
\no We reserve the notation $B_2(R)$ for the closed ball of radius $R>0$ in $L^2(\nu_T)$ and $\tilde B_2(R)$ for the closed ball in $L^2([0,T]; \RR^n)$. \\
\no Fix a given Polish space $\UU$.
Given a measurable map $\gG^0: \WW \times L^2(\nu_T) \longrightarrow \UU$ let us write the set of fixed points of $\eta$ under $\gG^0$,
\begin{align*}
 \SSS[\eta]:= \Big \{ (f,g)  \in \WW \times L^2(\nu_T) ~|~ \eta= \gG^0(f,g) \Big \}
\end{align*} 
and define the quadratic form
\begin{align} \label{eq: weak convergence quadratic form}
\II(\eta):= \displaystyle \inf_{(f, g) \in \SSS[\eta]} \frac{1}{2} \Big ( \int_0^T |f(s)|^2 ds + \int_0^T \int_\XX |g(s,z)|^2 \nu_T(ds,dz) \Big ),  \quad \eta \in \UU.
\end{align}
\begin{rem} \label{remark: on automatic tightness of a weak compact collection in L2}
We note that a collection $\{ \psi^\varepsilon\}_{\varepsilon>0} \subset \bar \aA$ with the property that $\displaystyle \sup_{\varepsilon >0} ||\psi^\varepsilon||_2 \leq M$ $\PP$-a.s. for some $M<\infty$ is regarded as a collection of $B_2(M)$-valued random variables where $B_2(M)$ is equipped with the weak topology on the Hilbert space $L^2(\nu_T)$. Since $B_2(M)$ is weakly compact such  collection of random variables is automatically tight. Suppose $\varphi \in S^M_{+, \varepsilon}$; which, we recall, means that $L_T(\varphi) \leq M a^2(\varepsilon)$. Due to Lemma 3.2. in  \cite{BDG15} there exists $\kappa_2(1) \in (0, \infty)$ independent of $\varepsilon>0$ and such that 
$
\psi \textbf{1}_{ \Big \{ |\psi| \leq \frac{1}{a(\varepsilon)} \Big \}} \in B_2(\sqrt{M \kappa_2(1)}),
$
where $\psi:= \frac{\varphi-1}{a(\varepsilon)}$.
\end{rem}
\no The following set of conditions imply the moderate deviations regime.
\begin{condition} \label{condition: condition for the uniform MDP}
Let $\UU$ be a Polish space. For any $\varepsilon>0$ let $\gG^\varepsilon: \VV \longrightarrow \UU$ and $\gG^0: \WW \times L^2(\nu_T) \longrightarrow \UU$ be measurable maps satisfying the following two conditions.
\begin{itemize}
\item[1.] \textbf{Continuity of the limiting map on the controls.} Suppose $(f_n,g_n), (f,g) \in \tilde S^M \times B_2(M)$ such that $(f_n, g_n) \ra (f,g)$ as $n \ra \infty$. 
Then
\begin{align*}
\gG^0(f_n, g_n) \ra \gG^0(f,g) \quad \text{ as } n \ra \infty.
\end{align*}
\item[2.] \textbf{Weak law for the map under shifts by random tightened controls.} For every $ M< \infty$ let $u^\varepsilon:=(\xi^\varepsilon, \varphi^\varepsilon) \in \tilde \uU^M_\varepsilon \times \uU^M_{+,\varepsilon}$. For some $\beta \in (0,1)$ let us assume that $\psi^\varepsilon \textbf{1}_{\{ |\psi^\varepsilon| < \frac{\beta}{a(\varepsilon)}\}} \Rightarrow \psi$ in $B_2(\sqrt{M \kappa_2(1)})$ where $\psi^\varepsilon:= \frac{\varphi^\varepsilon-1}{a(\varepsilon)}$ and $\frac{1}{a(\varepsilon)}\xi^\varepsilon \Rightarrow \xi$ as $\varepsilon \ra 0$ in the weak topology of $L^2([0,T]; \RR^n)$. 
Then 
\begin{align*}
\gG^\varepsilon \Big ( \sqrt{\varepsilon}B +\int_0^. \xi^\varepsilon(s) ds, \varepsilon N^{\frac{1}{\varepsilon} \varphi^\varepsilon} \Big ) \Rightarrow \gG^0 ( \xi , \psi ), \quad \text{ as } \varepsilon \ra 0.
\end{align*}
\end{itemize}
\end{condition}
\begin{theorem} \label{thm: sufficient cond MDP}
Suppose that for every $\varepsilon>0$ the maps $\gG^\varepsilon:\VV \longrightarrow \UU$ and 
$\gG^0: \WW \times L^2(\nu_T) \longrightarrow \UU$ satisfy the conditions of Hypothesis \ref{condition: condition for the uniform MDP}. Then the family $\{ Z^{\varepsilon}\}_{\varepsilon>0}$ defined by
\begin{align} \label{eq: weak convergence maps family}
Z^{\varepsilon}:= \gG^\varepsilon \Big ( \sqrt{\varepsilon}B, \varepsilon N^{\frac{1}{\varepsilon}} \Big ), \quad \varepsilon>0, \quad \varepsilon >0,
\end{align}
satisfies a large deviations principle with speed $b(\varepsilon) \ra 0$ in $\UU$ with good rate function $\II$ given by (\ref{eq: weak convergence quadratic form}).
\end{theorem}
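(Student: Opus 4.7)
The plan is to exploit the equivalence in Polish spaces between the LDP at speed $b(\varepsilon)$ and the corresponding Laplace principle, and to derive the latter via the variational representation for exponential functionals of $B$ and $N^{1/\varepsilon}$. Namely, for every bounded continuous $F:\UU\to\RR$ one has
\begin{equation*}
-b(\varepsilon)\ln \bar\EE\!\left[\exp\!\Big(-\tfrac{1}{b(\varepsilon)}F(Z^{\varepsilon})\Big)\right]
=\inf_{u^{\varepsilon}=(\xi^{\varepsilon},\varphi^{\varepsilon})\in\uU}\bar\EE\!\left[b(\varepsilon)\bar L_T(u^{\varepsilon})+F\!\Big(\gG^{\varepsilon}\!\Big(\sqrt{\varepsilon}B+\!\!\int_{0}^{\cdot}\xi^{\varepsilon}(s)ds,\,\varepsilon N^{\tfrac{1}{\varepsilon}\varphi^{\varepsilon}}\Big)\Big)\right],
\end{equation*}
and the task is to show that the right-hand side converges to $\inf_{\eta\in\UU}[F(\eta)+\II(\eta)]$ as $\varepsilon\to 0$. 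Since $b(\varepsilon)=\varepsilon/a^{2}(\varepsilon)$, the natural rescaling $\xi\mapsto a(\varepsilon)^{-1}\xi^{\varepsilon}$ and $\psi^{\varepsilon}:=(\varphi^{\varepsilon}-1)/a(\varepsilon)$ places the controls into the moderate-deviations-scale sublevel sets $\tilde\uU^{M}_{\varepsilon}$ and $\uU^{M}_{+,\varepsilon}$ defined in (\ref{eq: the random sublevel sets}).

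For the Laplace upper bound, I would pick $\delta>0$ and near-optimal controls $u^{\varepsilon}$ for the above infimum, then use the boundedness of $F$ to restrict, without loss of generality, to $u^{\varepsilon}\in\tilde\uU^{M}_{\varepsilon}\times\uU^{M}_{+,\varepsilon}$ for $M$ large. The truncated Poisson controls $\psi^{\varepsilon}\textbf{1}_{\{|\psi^{\varepsilon}|\leq\beta/a(\varepsilon)\}}$ are $B_{2}(\sqrt{M\kappa_{2}(1)})$-valued by the estimate recalled in Remark \ref{remark: on automatic tightness of a weak compact collection in L2} (Lemma 3.2 in \cite{BDG15}), so $(a(\varepsilon)^{-1}\xi^{\varepsilon},\psi^{\varepsilon}\textbf{1}_{\{|\psi^{\varepsilon}|\leq\beta/a(\varepsilon)\}})$ is tight in the compact product $\tilde S^{M}\times B_{2}(\sqrt{M\kappa_{2}(1)})$ under the weak topologies. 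Along a subsequence yielding the $\liminf$, Skorokhod representation and Condition \ref{condition: condition for the uniform MDP}.2 give
\begin{equation*}
\gG^{\varepsilon}\!\Big(\sqrt{\varepsilon}B+\!\!\int_{0}^{\cdot}\xi^{\varepsilon}(s)ds,\,\varepsilon N^{\tfrac{1}{\varepsilon}\varphi^{\varepsilon}}\Big)\Rightarrow\gG^{0}(\xi,\psi),
\end{equation*}
while Fatou's lemma combined with the weak lower semi-continuity of $\tilde L_{T}$ and a standard lower-semi-continuous estimate for $L_{T}(\varphi^{\varepsilon})$ expressed through $|\psi^{\varepsilon}|^{2}$ (again via Lemma 3.2 in \cite{BDG15}) produces $\liminf b(\varepsilon)\bar L_{T}(u^{\varepsilon})\geq \tfrac12(\|\xi\|_{L^{2}}^{2}+\|\psi\|_{L^{2}(\nu_T)}^{2})$. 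Taking the infimum over $\SSS[\gG^{0}(\xi,\psi)]$ gives $\liminf \geq \inf_{\eta}[F(\eta)+\II(\eta)]-\delta$.

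For the Laplace lower bound, fix $\eta\in\UU$ and a pair $(f,g)\in\SSS[\eta]$ nearly optimal in (\ref{eq: weak convergence quadratic form}). Choose the deterministic controls $\xi^{\varepsilon}:=a(\varepsilon)f$ and $\varphi^{\varepsilon}:=1+a(\varepsilon)g\,\textbf{1}_{\{a(\varepsilon)|g|\leq\beta\}}$, which lie in the sublevel sets since $\tilde L_{T}(\xi^{\varepsilon})\leq a^{2}(\varepsilon)\tilde L_{T}(f)$ and $L_{T}(\varphi^{\varepsilon})\lesssim a^{2}(\varepsilon)\|g\|^{2}_{L^{2}(\nu_{T})}$ by the quadratic Taylor bound on $\ell$ near $1$. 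Then $a(\varepsilon)^{-1}\xi^{\varepsilon}=f$ and $\psi^{\varepsilon}\textbf{1}_{\{|\psi^{\varepsilon}|\leq\beta/a(\varepsilon)\}}\to g$ in $L^{2}$, so Condition \ref{condition: condition for the uniform MDP}.1 (plus the same weak-convergence identification as in the upper bound to handle the shifted random perturbations) yields $\gG^{\varepsilon}(\cdots)\Rightarrow\gG^{0}(f,g)=\eta$, and the cost bound gives $\limsup \leq F(\eta)+\tfrac12(\|f\|^{2}+\|g\|^{2})+\mathrm{error}$. Taking the infimum over $(f,g)\in\SSS[\eta]$ and over $\eta$ yields the matching upper bound.

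Finally, goodness of $\II$ follows by taking $\eta_{n}\to\eta$ in $\UU$ with $\II(\eta_{n})\leq c$, selecting $(f_{n},g_{n})\in\SSS[\eta_{n}]\cap(\tilde S^{c}\times B_{2}(\sqrt{2c}))$, extracting a weak limit $(f,g)$ by compactness of $\tilde S^{c}\times B_{2}(\sqrt{2c})$, and using Condition \ref{condition: condition for the uniform MDP}.1 to pass $\gG^{0}(f_{n},g_{n})=\eta_{n}$ to $\gG^{0}(f,g)=\eta$. I expect the delicate step to be the handling of the indicator $\textbf{1}_{\{|\psi^{\varepsilon}|\leq\beta/a(\varepsilon)\}}$: one must show that the contribution of the complementary set $\{|\psi^{\varepsilon}|>\beta/a(\varepsilon)\}$ to both the cost $L_{T}(\varphi^{\varepsilon})$ and the dynamics of $\gG^{\varepsilon}$ is asymptotically negligible. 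This uses the super-linearity of $\ell$ at infinity (\,$\ell(r)\geq r\ln r-r$\,) together with the large-deviations-type tail estimates on stochastic integrals against $\tilde N^{\varphi^{\varepsilon}/\varepsilon}$ isolated in Lemma 3.2 of \cite{BDG15}, which is precisely where the moderate deviations scaling $a(\varepsilon)=\varepsilon^{(1-\theta)/2}$ and the exponential integrability Hypothesis \ref{condition: the measure} enter.
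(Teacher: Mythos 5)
The paper does not prove this statement at all: immediately after its enunciation it writes ``Theorem \ref{thm: sufficient cond MDP} is a particular case of Theorem 9.9 in \cite{Budhiraja book}'' and cites the abstract sufficient condition as a black box. Your proposal, by contrast, reconstructs the proof of the abstract theorem from first principles. Conceptually your reconstruction is faithful to the argument in the cited reference (Budhiraja--Dupuis, and its precursors \cite{BDM11}, \cite{BCD13}, \cite{BDG15}): Laplace-principle equivalence on Polish spaces, variational representation for exponential integrals of $B$ and $N^{1/\varepsilon}$, tightness and Skorokhod representation together with Condition~\ref{condition: condition for the uniform MDP}.2 for the Laplace upper bound, near-optimal deterministic controls for the lower bound, and compactness of $\tilde S^M \times B_2(\sqrt{M\kappa_2(1)})$ plus Condition~\ref{condition: condition for the uniform MDP}.1 for goodness of $\II$. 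Your flagging of the truncation indicator $\textbf{1}_{\{|\psi^\varepsilon|\leq\beta/a(\varepsilon)\}}$ as the genuinely delicate step in the MDP (as opposed to LDP) regime is also on point.

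Two slips worth fixing. First, the cost factor in your variational identity is off by a power of $\varepsilon$: because $N^{1/\varepsilon}$ has intensity $\varepsilon^{-1}\nu_T$ and the Brownian shift is $\int_0^\cdot\xi^\varepsilon$ (not $\sqrt{\varepsilon}\int_0^\cdot\xi^\varepsilon$), the representation gives the cost $\varepsilon^{-1}\bar L_T(u^\varepsilon)$ prior to rescaling, so after multiplying by $b(\varepsilon)$ the coefficient is $b(\varepsilon)/\varepsilon = a(\varepsilon)^{-2}$, not $b(\varepsilon)$; otherwise for $u^\varepsilon\in\tilde\uU^M_\varepsilon\times\uU^M_{+,\varepsilon}$ the cost term would vanish and the limit would be wrong. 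The rest of your sketch implicitly uses the correct scaling, since you say $a(\varepsilon)^{-2}\bar L_T(u^\varepsilon)\leq M$ places the controls into the sublevel sets of Remark~\ref{remark: on automatic tightness of a weak compact collection in L2}. Second, in the lower bound it is Condition~\ref{condition: condition for the uniform MDP}.2 (the weak law for $\gG^\varepsilon$ under shifts), not Condition~\ref{condition: condition for the uniform MDP}.1, that yields $\gG^\varepsilon(\cdots)\Rightarrow\gG^0(f,g)$ for your deterministic controls; Condition 1 is a statement about the deterministic limiting map and is used in the upper bound to identify limit points and to establish goodness of $\II$. You do write ``plus the same weak-convergence identification,'' so you are clearly aware of this, but the reference to Condition 1 there is misleading.
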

\no Theorem \ref{thm: sufficient cond MDP} is a particular case of Theorem 9.9 in \cite{Budhiraja book}. In what follows we apply Theorem \ref{thm: sufficient cond MDP} to our setting. \\

\no Let us fix $T>0$, $\tau>0$, $(\zeta,y) \in \cC \times \RR^k$ and for every $\varepsilon>0$ let $(X^{\varepsilon,\zeta,y}(t), Y^{\varepsilon,\zeta,y}(t))_{t \in [-\tau,T]}$ be the unique strong solution of (\ref{eq: the multiscale SDE}) with initial datum (\ref{eq: the multisclae SDE initial datum}). For every $\varepsilon>0$ consider $(Z^\varepsilon)_{\varepsilon>0}$ given by (\ref{eq: the deviations }). Under the standing assumptions made in the beginning of this section, for any $\varepsilon>0$, Yamada-Watanabe's theorem ensures the existence of a measurable map $\gG^\varepsilon: \VV \longrightarrow \DD([-\tau,T];\RR^d)$ such that 
\begin{align} \label{eq: Ito maps for the MDP}
Z^{\varepsilon}:= \gG^\varepsilon(\sqrt{\varepsilon}B, \varepsilon N^{\frac{1}{\varepsilon}}).
\end{align}
\no  We recall that $B=(B^1,B^2)$ is a Brownian motion in $\RR^{d \times k}$ due to the independence of $B^1$ and $B^2$ and for any $\varepsilon>0$ the Poisson random measure $N^{\frac{1}{\varepsilon}}$ is independent of $B^1$ and $B^2$ and hence from $B$ which justify the existence of the Ito map $\gG^\varepsilon$.
\no The proof of Theorem \ref{theorem: MDP for the slow component } consists in checking the conditions (1) and (2) of Hypothesis \ref{condition: condition for the uniform MDP} for $(\gG^\varepsilon)_{\varepsilon>0}$ and $\gG^0: \WW \times L^2(\nu_T) \longrightarrow C([-\tau,T];\RR^d)$, $\gG^0(f,g)= \eta$, with $\eta \in C([-\tau,T];\RR^d)$ defined by the skeleton equation (\ref{eq: the skeleton equation}). Hence Theorem \ref{thm: sufficient cond MDP} allows us to conclude. 

\subsection{The skeleton equations and the compactness condition}
For any $\chi \in \cC$ and $u=(f,g) \in L^2([0,T]; \RR^d) \times L^2(\nu_T)$ let us denote by $\bar Z^u \in \CC([-\tau,T]; \RR^d)$ the unique solution of (\ref{eq: the skeleton equation}). By definition we have  
\begin{align*}
\gG^0(f,g) = \bar Z^u.
\end{align*}
\begin{proposition} \label{prop: first condition MDP}
For every $M < \infty$ one has that the set
\begin{align*}
\KK_M := \Big \{ \gG^0  (  f, g  ) ~|~ (f,g) \in  \tilde B_2(M) \times B_2(M) \Big \}
\end{align*}
is compact in $C([-\tau,T]; \RR^d)$.
\end{proposition}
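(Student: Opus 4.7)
The plan is to show that $\gG^0$, restricted to $\tilde B_2(M)\times B_2(M)$, is continuous from the weak product topology on $L^2([0,T];\RR^d)\times L^2(\nu_T)$ into the uniform topology on $C([-\tau,T];\RR^d)$. Since $\tilde B_2(M)\times B_2(M)$ is weakly compact (a product of closed balls in Hilbert spaces) and the continuous image of a compact set is compact, this will deliver compactness of $\KK_M$.

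First I would derive uniform a priori estimates for solutions of the skeleton equation (\ref{eq: the skeleton equation}). Because $\bar X^{0,\zeta}$ is a fixed continuous function on $[-\tau,T]$, the coefficients $D\bar a(\bar X^{0,\zeta}_s)$, $b(\bar X^{0,\zeta}_s)$, and $c(\bar X^{0,\zeta}_s,\cdot)$ are fixed time-dependent objects. Hypothesis \ref{condition: condition coeffi MDP- differentiablity} gives $|D\bar a(\bar X^{0,\zeta}_s)|\leq C_1$, and the sublinear growth in Remark \ref{remark: sublinear growth} combined with Hypothesis \ref{condition: the measure} yields deterministic bounds $\|b(\bar X^{0,\zeta}_\cdot)\|_\infty\leq C_2$ and $\sup_{s\in[0,T]} \int_\XX |c(\bar X^{0,\zeta}_s,z)|^2\nu(dz)\leq C_3$. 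Applying Cauchy--Schwarz to the two control integrals, each bounded by $(TM\cdot C_2^2)^{1/2}$ respectively $(T C_3 \cdot 2M)^{1/2}$, and then Gronwall's inequality to the first term, one obtains $\|\eta\|_{\infty,[-\tau,T]}\leq K(M,T)$ uniformly over $(f,g)\in\tilde B_2(M)\times B_2(M)$.

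Next I would establish equicontinuity. For $0\leq s<t\leq T$, the three integrals in (\ref{eq: the skeleton equation}) are controlled, using the bounds above and Cauchy--Schwarz, by $C_1 K(M,T)(t-s)$, $C_2\sqrt{2M}\sqrt{t-s}$, and $\sqrt{C_3(t-s)}\sqrt{2M}$, respectively, so $|\eta(t)-\eta(s)|\leq \omega_M(t-s)$ with a modulus $\omega_M$ independent of the control. Together with the uniform bound this yields, via Arzel\`a--Ascoli, that $\KK_M$ is precompact.

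The heart of the argument is passing to the limit. Given a sequence $(f_n,g_n)\to(f,g)$ weakly in $\tilde B_2(M)\times B_2(M)$, Arzel\`a--Ascoli extracts from $\eta_n:=\gG^0(f_n,g_n)$ a subsequence converging uniformly on $[-\tau,T]$ to some $\eta^*$. In the skeleton equation for $\eta_n$: uniform convergence of $\eta_n$ implies $\eta_{n,s}\to\eta^*_s$ in $\dD$ uniformly in $s$, and boundedness of $D\bar a(\bar X^{0,\zeta}_s)$ gives convergence of the first integral by dominated convergence. The linear functionals $f\mapsto\int_0^t b(\bar X^{0,\zeta}_s)f(s)\,ds$ and $g\mapsto\int_0^t\!\int_\XX c(\bar X^{0,\zeta}_s,z)g(s,z)\nu(dz)\,ds$ are continuous on $L^2([0,T];\RR^d)$ and $L^2(\nu_T)$ respectively (thanks to the bounds on $b$ and $c$), hence they converge under weak convergence of $(f_n,g_n)$. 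Consequently $\eta^*$ solves (\ref{eq: the skeleton equation}) with controls $(f,g)$; by uniqueness of the skeleton solution, $\eta^*=\gG^0(f,g)$, and the full subsequence in fact converges. This establishes the sought continuity.

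The only delicate point I foresee is handling the passage to the limit in the segment-valued term $D\bar a(\bar X^{0,\zeta}_s)\eta_{n,s}$: since $D\bar a$ acts on the whole segment rather than pointwise, I would want to use that $D\bar a$ is a bounded linear operator (by Lipschitz continuity of $\bar a$ one gets $\|D\bar a(\varphi)\|_{\mathrm{op}}\leq L$ uniformly), so $|D\bar a(\bar X^{0,\zeta}_s)(\eta_{n,s}-\eta^*_s)|\leq L\|\eta_n-\eta^*\|_{\infty,[-\tau,T]}\to 0$, which makes the passage immediate. The rest reduces to routine weak-to-strong continuity of bounded linear functionals.
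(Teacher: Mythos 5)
Your proposal is correct and follows essentially the same route as the paper: uniform bounds via Gronwall's lemma, equicontinuity, Arzel\`a--Ascoli to extract a uniformly convergent subsequence, passage to the limit using weak convergence of the controls and boundedness of the coefficients, and identification of the limit with $\gG^0(f,g)$ by uniqueness of the skeleton solution (this is precisely the reformulation of the proposition already stated in Remark \ref{remark: the first condition MDP}). The only difference is cosmetic: you make explicit the Cauchy--Schwarz bounds behind the equicontinuity modulus and the operator-norm argument for the segment-valued term $D\bar a(\bar X^{0,\chi}_s)\eta_{n,s}$, both of which the paper glosses over with ``immediate to conclude'' and ``dominated convergence.''
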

\begin{rem} \label{remark: the first condition MDP}
 Proposition \ref{prop: first condition MDP}  is implied by the following.  Fix $0 \leq M < \infty$. 
 Let $(f_n,g_n)_{n \in \NN} \subset \tilde B_2(M) \times B_2(M)$ such that $(f_n, g_n) \rightharpoonup (f,g)$ as $n \ra \infty$ weakly. Therefore
\begin{align*}
\gG^0( f_n, g_n) \ra \gG^0(f,g) \quad \text{ as } n \ra \infty. 
\end{align*}
\end{rem}
\no The proof of the sentence of Remark \ref{remark: the first condition MDP} that implies Proposition \ref{prop: first condition MDP} is standard. We refer the reader to Lemma 4.1 in the seminal work \cite{BDG15}.
\subsection{The weak limit of the controlled auxiliary processes}
\subsubsection{The equations for the controlled auxiliary processes.}
\no This section serves the purpose of verifying the second condition in Hypothesis \ref{condition: condition for the uniform MDP} for  $\gG^0$ and the family $\{ \gG^\varepsilon: \VV \longrightarrow \DD([- \tau,T]; \RR^d)\}_{\varepsilon>0}$. For every $\varepsilon>0$ recall the random sublevel sets $\uU^M_{\varepsilon}$ and $\tilde  \uU^M_{+, \varepsilon}$ given by (\ref{eq: the random sublevel sets}) and let $u^\varepsilon:=(\xi^\varepsilon, \varphi^\varepsilon) \in  \uU^M_{\varepsilon} \times \tilde \uU^M_{+, \varepsilon}$. Set $\tilde \varphi^\varepsilon= \frac{1}{\varphi^\varepsilon}$. The definition of $\tilde \varphi^\varepsilon$ makes sense since one has $\varphi^\varepsilon \in \aA_b$ $\bar \PP$-a.s. For any $t \in [0,T]$ we define the $\bar \FF$-martingales 
\begin{align*}
\eE(\xi^\varepsilon)(t) &:= \exp \Big ( \int_0^t \xi^\varepsilon(s) dB(s) - \frac{1}{2} \int_0^t |\xi^\varepsilon(s)|^2 ds \Big ) \quad \text{and} \\
\eE(\tilde \varphi^\varepsilon)(t) &:= \exp \Big ( \int_0^t \int_{\XX} \int_0^{\frac{1}{\varepsilon}} \ln \tilde \varphi^\varepsilon(s,z) \bar N(ds,dz,dr) \Big ) \\
& + \int_0^t \int_{\XX} \int_0^{\frac{1}{\varepsilon}} (- \tilde \varphi^\varepsilon(s,z)+1) ds \nu(dz) dr \Big ).
\end{align*}
\no For every $t \in [0,T]$ let $\bar \eE(u^\varepsilon)(t):= \tilde \eE(\xi^\varepsilon)(t) \eE(\tilde \varphi^\varepsilon)(t)$. Girsanov's theorem stated in the form of Theorem III.3.24 of \cite{Jacod Shiryaev} ensures that $(\bar \eE(u^\varepsilon)(t))_{t \in [0,T]}$ is an $\bar \FF$-martingale. Hence the probability measures defined on $(\bar \VV, \bB(\bar \VV))$ by
\begin{align*}
\QQ^\varepsilon_T(G) := \int_G \bar \eE(u^\varepsilon)(T) d \bar \PP, \quad \text{ for all } G \in \bB(\bar \VV)
\end{align*}
are absolutely continuous with respect to $\bar \PP$. Under $\QQ^\varepsilon_T$ the stochastic process 
\begin{align*}
\tilde B^\varepsilon(t):= B(s) - \int_0^t \xi^\varepsilon(s)ds, \quad t \in [0,T],
\end{align*}
is a standard Brownian motion and $\varepsilon N^{\frac{1}{\varepsilon} \varphi^\varepsilon}$ is an independent random measure with the same law of $\varepsilon N^{\frac{1}{\varepsilon}}$ under $\bar \PP$. We recall that
\begin{align*}
N^{\frac{1}{\varepsilon} \varphi^\varepsilon}((0,t] \times U) := \int_0^t \int_U \int_0^\infty \textbf{1}_{[0, \frac{1}{\varepsilon} \varphi^\varepsilon]}(r) \bar N(ds,dz,dr).
\end{align*} 
\no For every $\varepsilon>0$ and $t \in [0,T]$ we write $\xi^\varepsilon(t)=(\xi^\varepsilon_1, \xi^\varepsilon_2)(t) \in \RR^d \times \RR^k$. For any $(\chi,y) \in \cC \times \RR^k$, we define the slow controlled process $(\xX^\varepsilon(t))_{t \in [0,T]}$ and the fast controlled process $(\yY^\varepsilon(t))_{t \in [0,T]}$ given as the strong solutions of (\ref{eq: Khasminki- controlled X variable}) and respectively (\ref{eq: Khasminkii- controlled Y variable}) with respect to $\bar \PP$ (since $\QQ^\varepsilon_T \ll \bar \PP$).

\no For every $\varepsilon>0$ we define $(\bar \xX^\varepsilon(t))_{t \in [0,T]}$ the fast averaged controlled process as the strong solution under $\bar \PP$ of the controlled stochastic differential equation (\ref{eq: Khasminkii- controlled averaged variable}).

For every $\varepsilon>0$ let
\begin{align} \label{eq: the controlled deviation}
\zZ^\varepsilon := \frac{\xX^\varepsilon - \bar X^0}{a(\varepsilon)} =  \gG^\varepsilon \Big (  \sqrt{\varepsilon}B + \int_0^. \xi^\varepsilon(s)ds, \varepsilon N^{\frac{1}{\varepsilon} \varphi^\varepsilon} \Big )
\end{align}
and respectively
\begin{align} \label{eq: the averaged controlled deviation}
\bar \zZ^\varepsilon := \frac{\bar \xX^\varepsilon - \bar X^0}{a(\varepsilon)}.
\end{align}
\paragraph*{The weak limit for the maps under shifts by random tightened controls.} Let $M < \infty$ and $\beta \in (0,1)$. Let $(\xi^\varepsilon, \varphi^\varepsilon) \in \tilde \uU^M_{\varepsilon} \times \uU^M_{+,\varepsilon}$ such that $\psi^\varepsilon \text{1}_{|\psi^\varepsilon| < \frac{\beta}{a(\varepsilon)}} \Rightarrow \psi$ in $B_2(\sqrt{M \kappa_2(1)})$ where $\psi^\varepsilon:= \frac{\varphi^\varepsilon-1}{a(\varepsilon)}$ and $\frac{1}{a(\varepsilon)} \xi^\varepsilon \Rightarrow \xi$ in $\tilde B_2(M)$. The conclusion in the second statement in Hypothesis \ref{condition: condition for the uniform MDP} for $(\gG^\varepsilon)_{\varepsilon>0}$ and $\gG^0$ reads as $\zZ^\varepsilon \Rightarrow \bar Z$, as $\varepsilon \ra 0$, where $\bar Z \in  \CC([-\tau,T]; \RR^d)$ solves uniquely
\begin{align} \label{eq: the controlled averaged eq for mdp}
\begin{cases}
\bar Z(t)= \displaystyle \int_0^t D \bar a(\bar X^{0, \chi}_s) \bar Z(s) ds + \int_0^t \sigma(\bar X^{0,\chi}_s) \xi(s)ds + \int_0^t \int_\XX c(\bar X^{0,\chi}_{s},z) \psi(s,z) \nu(dz)ds, \quad t \in [0,T]\\
\bar Z(t)=0, \quad t \in [-\tau,0].
\end{cases}
\end{align}
In order to prove that $\zZ^\varepsilon \Rightarrow \bar Z$, as $\varepsilon \ra 0$ we proceed as follows.
\begin{itemize}
\item[1.] This step passes through two intermediary tasks. Firstly one shows that the laws of $(\bar \zZ^\varepsilon)_{\varepsilon>0}$ are tight in $\pP(C([-\tau,T]; \RR^d))$ (since compact sets in the topology generated by the uniform convergence are also compact sets in the Skorokhod topology). Then it follows that there exists $\tilde \zZ \in C([-\tau,T];\RR^d)$ such that $\bar \zZ^\varepsilon \Rightarrow \tilde \zZ$ as $\varepsilon \ra 0$. Passing to the pointwise limit in the equation satisfied by $\bar \zZ^\varepsilon$ and due to the uniqueness of solution of (\ref{eq: the controlled averaged eq for mdp}) we conclude that $\tilde \zZ= \bar Z$.
\item[2.] We prove the following strong (controlled) averaging principle:
\begin{align*}
\displaystyle \lim_{\varepsilon \ra 0} \bar \PP \Big ( \displaystyle \sup_{t \in [0,T]} |\zZ^\varepsilon(t) - \bar \zZ^\varepsilon(t)| > \delta\Big )=0, \quad \text{ for any } \delta>0.
\end{align*}
From the limit above and Theorem 4.1. in \cite{Billingsley}, commonly known as Slutzsky's theorem, we can identify $\bar Z$ as the weak limit of $(\zZ^\varepsilon)_{\varepsilon}$ as $\varepsilon \ra 0$.
\end{itemize}
\subsubsection{A priori estimates and a  localization procedure} %\cred{Friday 09 August}
\no For every $\varepsilon>0$ let $\rR(\varepsilon)>0$ such that $\rR(\varepsilon) \ra \infty$ and $a(\varepsilon) \rR^2(\varepsilon) \ra 0$ as $\varepsilon \ra 0$. For example $\rR(\varepsilon) := \frac{1}{\sqrt[4]{a(\varepsilon)}}$, $\varepsilon>0$, does the job. Consequently $\sqrt{\varepsilon} \rR^2(\varepsilon) \ra 0$  and therefore $\varepsilon \rR^2(\varepsilon) \ra 0$ as $\varepsilon \ra 0$. 
For every $\varepsilon>0$ and this choice of $\rR(\varepsilon)$ we define the $\bar \FF$-stopping times
\begin{align} \label{eq: first exit time tilde X}
\tilde \tau^\varepsilon_{\rR(\varepsilon)} := \inf \{ t \in [0,T] ~|~ \xX^\varepsilon(t) \notin  B_{\rR(\varepsilon)}(0)  \}.
\end{align}
and
\begin{align} \label{eq: first exit time bar X}
\bar \tau^\varepsilon_{\rR(\varepsilon)} := \inf \{ t \in [0,T] ~|~ \bar \xX^\varepsilon(t) \notin B_{\rR(\varepsilon)}(0) \}
\end{align}
\no The following list of propositions and lemmas are fundamental estimates used in the strategy described above to obtain the conclusion that $\zZ^\varepsilon \Rightarrow \bar Z$ as $\varepsilon \ra 0$. 

\begin{proposition} \label{proposition: localization technique to the bounded case}
Let the standing assumptions of Theorem \ref{theorem: MDP for the slow component } to hold. For any $0 <M < \infty$, $(\xi^\varepsilon, \varphi^\varepsilon)_{ \varepsilon>0} \subset \tilde{\uU}^M_{+, \varepsilon} \times  \uU^M_{+, \varepsilon}$, $\rR:(0,1] \longrightarrow (0, \infty)$ such that $\rR(\varepsilon) \ra \infty$ and $a(\varepsilon)\rR^2(\varepsilon) \ra 0$ as $\varepsilon \ra 0$ and $T, \tau>0$ we have the following. Given $(\xX^\varepsilon(s))_{s \in [- \tau,T]}$ defined by (\ref{eq: Khasminki- controlled X variable}) and $(\bar \xX^\varepsilon(s))_{s \in [- \tau ,T]}$ by (\ref{eq: Khasminkii- controlled averaged variable}) there exists $0 < \varepsilon_0 <1$ and a constant $\cC>0$ such that for every $0 <\varepsilon< \varepsilon_0$ the following estimates hold: %\marginpar{\cred{include corrected proof in the report}}
\begin{align} \label{eq: localization probability controlled process}
\bar \PP \Big ( \displaystyle \sup_{0 \leq t \leq T} |\xX^\varepsilon(s)| > \rR(\varepsilon )\Big ) \leq 2 e^{- \frac{1}{2} \rR(\varepsilon)} + \cC \varepsilon \rR(\varepsilon)
\end{align}
and 
\begin{align} \label{eq: localization probability averaged controlled process}
\bar \PP \Big ( \displaystyle \sup_{0 \leq t \leq T} |\bar \xX^\varepsilon(s)| > \rR(\varepsilon )\Big ) \leq 2 e^{- \frac{1}{2} \rR(\varepsilon)} + \cC \varepsilon \rR(\varepsilon)
\end{align}
\end{proposition}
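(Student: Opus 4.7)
My plan is to prove the estimate \eqref{eq: localization probability controlled process} for the controlled process $\xX^\varepsilon$; the estimate \eqref{eq: localization probability averaged controlled process} for $\bar\xX^\varepsilon$ then follows by the same argument because the equation \eqref{eq: Khasminkii- controlled averaged variable} is structurally simpler (it is decoupled from $\yY^\varepsilon$, and $\bar a$ is globally Lipschitz by Proposition \ref{proposition: bar a is Lipschitz continuous}, so it enjoys the same sublinear growth). The probability to be controlled is $\bar\PP(\tilde\tau^\varepsilon_{\rR(\varepsilon)}\leq T)$, with $\tilde\tau^\varepsilon_{\rR(\varepsilon)}$ as in \eqref{eq: first exit time tilde X}. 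The natural decomposition of \eqref{eq: Khasminki- controlled X variable} is into a drift piece (comprising $a(\xX^\varepsilon_s,\yY^\varepsilon(s))$, the $L^2$-control term $\sigma(\xX^\varepsilon_s)\xi^\varepsilon_1(s)$ and the Poisson-drift $\int_\XX c(\xX^\varepsilon_s,z)(\varphi^\varepsilon(s,z)-1)\nu(dz)$), a Brownian martingale $M^{\varepsilon,1}(t):=\sqrt\varepsilon\int_0^t\sigma(\xX^\varepsilon_s)dB^1(s)$ of order $\sqrt\varepsilon$, and a compensated Poisson martingale $M^{\varepsilon,2}(t):=\varepsilon\int_0^t\int_\XX c(\xX^\varepsilon_{s-},z)\tilde N^{\frac{1}{\varepsilon}\varphi^\varepsilon}(ds,dz)$ of order $\varepsilon$.

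For the drift, I would work on the stopped interval $[0,t\wedge\tilde\tau^\varepsilon_{\rR(\varepsilon)}]$, where the sublinear growth bounds \eqref{eq: sublinear growth of the coefficients} yield factors $(1+\rR(\varepsilon)+|\yY^\varepsilon(s)|)$. Combining this with the a priori bound on $\yY^\varepsilon$ from Proposition \ref{prop: a priori bound slow variable} (or its controlled analogue), with Cauchy--Schwarz and $\int_0^T|\xi^\varepsilon|^2 ds\leq Ma^2(\varepsilon)$, and with the $L^1$-control $\int_0^T\!\int_\XX|\varphi^\varepsilon-1|\,\nu(dz)ds\lesssim_\varepsilon a(\varepsilon)\sqrt{M\kappa_2(1)}$ obtained through Lemma 3.2 of \cite{BDG15} (as used in Remark \ref{remark: on automatic tightness of a weak compact collection in L2}), a Gronwall argument bounds the drift contribution uniformly by some $\cC(1+\rR(\varepsilon)a(\varepsilon))$. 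For the chosen scaling $a(\varepsilon)\rR^2(\varepsilon)\to 0$, this deterministic piece is much smaller than $\rR(\varepsilon)/2$ for $\varepsilon$ small enough, and may therefore be absorbed by shrinking the target threshold.

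For the two martingale parts I would use Bernstein's inequality for c\`adl\`ag local martingales, in the form of Theorem~3.3 of \cite{DZ01}. On $[0, t\wedge\tilde\tau^\varepsilon_{\rR(\varepsilon)}]$ the quadratic variation of $M^{\varepsilon,1}$ is bounded by $\varepsilon T L_1^2(1+\rR(\varepsilon))^2$, so Bernstein gives a bound of the form $2\exp\bigl(-c\rR^2(\varepsilon)/(\varepsilon(1+\rR(\varepsilon))^2)\bigr)$; using $\varepsilon=a^2(\varepsilon)\varepsilon^\theta$ and $a(\varepsilon)\rR^2(\varepsilon)\to 0$, this simplifies (for $\varepsilon$ small) to $2e^{-\rR(\varepsilon)/2}$. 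For $M^{\varepsilon,2}$ the predictable quadratic variation on the stopped interval is of order $\varepsilon^2(1+\rR(\varepsilon))^2\int_0^T\!\int_\XX|z|^2\varphi^\varepsilon(s,z)\nu(dz)ds$; splitting jumps into small and large parts and combining Lemma~3.2 of \cite{BDG15} with the exponential integrability condition \eqref{eq: integrability condition measure} gives a finite bound, hence a Bernstein-type tail which, after optimization against the jump-size bound $\varepsilon\,\mathrm{diam}\,\mathrm{supp}(c)$ produced by the multiplicative factor $\varepsilon$ in front of $M^{\varepsilon,2}$, is controlled by $\cC\varepsilon\rR(\varepsilon)$ via the linear-in-$x$ correction of Bernstein's inequality for jump martingales.

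Summing the two tail bounds and the (negligible) drift contribution yields \eqref{eq: localization probability controlled process}. The main obstacle I foresee is producing the exact shape $\cC\varepsilon\rR(\varepsilon)$ for the Poisson martingale: a crude second-moment/Chebyshev estimate would only deliver $\cC\varepsilon^2\rR^2(\varepsilon)$ or, after losing the square, $\cC\varepsilon$, and it is the interplay between the linear correction in the Bennett/Bernstein inequality for jump martingales and the prefactor $\varepsilon$ (which controls the largest admissible jump size at each time) that produces the required $\varepsilon\rR(\varepsilon)$ scaling. Keeping track of this is where the exponential integrability in Hypothesis~\ref{condition: the measure} is essential.
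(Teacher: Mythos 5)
Your high-level ingredients (Bernstein's inequality for c\`adl\`ag local martingales, quadratic variation on the stopped interval, the exponential integrability of $\nu$ via Lemma \ref{lemma: integrability controls}) are the right ones, but your architecture is more complicated than the paper's and, more importantly, you miss the device that actually produces the algebraic tail $\cC\varepsilon\rR(\varepsilon)$ — which is exactly the piece you flag as ``the main obstacle.''

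The paper does not apply Bernstein separately to a Brownian and a Poissonian martingale, nor does it optimize a Bennett--Bernstein jump-size correction. It applies Bernstein once, to the local martingale part of $\xX^\varepsilon$ on $[0,\tilde\tau^\varepsilon_{\rR(\varepsilon)}]$, with a free truncation level $L_\varepsilon$ for the quadratic variation:
\begin{equation*}
\bar\PP\Big(\sup_{0\le s\le T}|\xX^\varepsilon(s)|>\rR(\varepsilon)\Big)\ \le\ 2\exp\Big(-\tfrac12\,\tfrac{\rR^2(\varepsilon)}{L_\varepsilon}\Big)\ +\ \bar\PP\Big([\xX^\varepsilon]_{\tilde\tau^\varepsilon_{\rR(\varepsilon)}}>L_\varepsilon\Big).
\end{equation*}
On the stopped interval the quadratic variation is bounded by $\cC\,\varepsilon\,\rR^2(\varepsilon)\big(1+\varepsilon\int_0^T\!\int_\XX|z|^2\,N^{\frac{1}{\varepsilon}\varphi^\varepsilon}(ds,dz)\big)$, and the second term above is estimated by plain Chebyshev together with (\ref{eq: lemma integrability controls- estimate 1}), giving $\cC\,\varepsilon\,\rR^2(\varepsilon)/L_\varepsilon$. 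The crucial step is the choice $L_\varepsilon=\rR(\varepsilon)$, which simultaneously turns the exponential term into $2e^{-\rR(\varepsilon)/2}$ and the Chebyshev term into $\cC\varepsilon\rR(\varepsilon)$. So the $\varepsilon\rR(\varepsilon)$ scaling does not come from a jump-size correction inside Bernstein's inequality; it comes from a Markov-type bound on the overshoot of the quadratic variation past the free threshold $L_\varepsilon$. Your proposal, as written, leaves this genuinely open: you correctly anticipate that a crude second-moment estimate gives the wrong exponent, but you propose to close the gap by carrying out an optimization against the Bennett correction that you never actually perform — and it is not clear it would yield the exact $\cC\varepsilon\rR(\varepsilon)$ shape.

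Two smaller remarks. First, your separate treatment of $M^{\varepsilon,1}$ gives a bound $2\exp(-c\rR^2(\varepsilon)/(\varepsilon(1+\rR(\varepsilon))^2))$ which is far smaller than $2e^{-\rR(\varepsilon)/2}$; that is fine for an upper bound, but it signals that the clean target shape should be obtained by balancing a free parameter, not by summing two bounds of very different orders. Second, your care with the drift term (Gronwall on the stopped interval before applying Bernstein to the genuine local martingale) is, if anything, more careful than the paper's own write-up, which treats $\xX^\varepsilon$ directly as a locally square-integrable martingale; retaining that step and replacing the per-martingale Bennett analysis with the single Bernstein-plus-threshold argument above would give a complete proof.
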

\no The proof follows the same reasoning employed in Lemma 2.1 of \cite{OliveiraHogele}. 
%The proof is given in the Subsubsection \ref{section: Prop localization technique to the bounded case} of the Appendix.
\begin{proposition} \label{proposition: a priori bound controlled processes}
Let $M>0$. Fix a function $\rR: (0,\infty) \longrightarrow (0,\infty)$ satisfying the assumptions of Proposition \ref{proposition: localization technique to the bounded case} and for every $\varepsilon>0$ let $\tilde \tau^\varepsilon_{\rR(\varepsilon)}$ defined by (\ref{eq: first exit time tilde X}). Under the assumptions of Hypotheses \ref{condition: the measure}-\ref{condition: condition coeffi MDP- differentiablity} there exists some $\varepsilon_0>0$ such that the following bound holds:
\begin{align} \label{eq: a priori bound controlled processes}
\Gamma_1(M) := \displaystyle \sup_{0 < \varepsilon < \varepsilon_0}  \displaystyle \sup_{(\xi,\varphi) \in \tilde \uU^M_\varepsilon \times \uU^M_{+, \varepsilon}} \Big ( \bar \EE \Big [ \displaystyle \sup_{-\tau \leq t \leq \tilde \tau^\varepsilon_{\rR(\varepsilon)}} |\xX^{\varepsilon}(t)|^2 \Big ] +  \displaystyle \sup_{-\tau \leq t \leq  T} \bar \EE \Big [ | \yY^{\varepsilon}(t)|^2 \textbf{1}_{ \{  \tau^\varepsilon_{\rR(\varepsilon)}>T \}} \Big ] \Big ) < \infty. 
\end{align}
\end{proposition}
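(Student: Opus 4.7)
The plan is to establish the slow and fast bounds separately, using the slow estimate as input to the fast one. The crucial smallness comes from the tightened constraints $\int_0^T|\xi^\varepsilon(s)|^2\, ds \leq 2Ma^2(\varepsilon)$ and $L_T(\varphi^\varepsilon) \leq Ma^2(\varepsilon)$, both of order $\varepsilon^{1-\theta}$ by the parametrization \eqref{eq:parametrization- the speed a}, which offset the $\varepsilon$-singular scalings appearing in the controlled equations.

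For the slow bound I would apply It\^{o}'s formula to $|\xX^\varepsilon(t\wedge\tilde\tau^\varepsilon_{\rR(\varepsilon)})|^2$ and take expectation. The drift contribution $2\langle\xX^\varepsilon(s), a(\xX^\varepsilon_s, \yY^\varepsilon(s))\rangle$ is controlled purely by dissipativity \eqref{eq: dissipativity of the mixing coefficient} combined with $a(0,y)=0$: setting $\tilde\zeta\equiv 0$ in \eqref{eq: dissipativity of the mixing coefficient} yields $2\langle\xX^\varepsilon(s), a(\xX^\varepsilon_s,\yY^\varepsilon(s))\rangle \leq -\beta_1\|\xX^\varepsilon_s\|_\infty^2 \leq -\beta_1|\xX^\varepsilon(s)|^2$, eliminating the $\yY^\varepsilon$-dependence so that no a priori bound on the fast variable is needed at this stage. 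The controlled drifts involving $\sigma\xi^\varepsilon_1$ and $\int_\XX c(\varphi^\varepsilon-1)\nu(dz)$ are absorbed by Young's inequality together with the tightened $L^2$-constraints; the Poisson control in addition requires Lemma 3.2 of \cite{BDG15} to convert entropy-type control of $\varphi^\varepsilon-1$ into an $L^2$ bound up to a large-jump truncation estimated via the exponential integrability \eqref{eq: integrability condition measure}. The Brownian and Poisson stochastic integrals have predictable quadratic variations of order $\varepsilon(1+\rR^2(\varepsilon))$, which vanishes by the choice of $\rR(\varepsilon)$, so the Burkholder-Davis-Gundy inequality (classical for the Brownian part, Kunita for the jump part) makes them asymptotically negligible. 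Gronwall's lemma then delivers a bound on $\bar\EE[\sup_{-\tau\le t\le\tilde\tau^\varepsilon_{\rR(\varepsilon)}}|\xX^\varepsilon(t)|^2]$ uniform in $\varepsilon$ and in the controls.

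For the fast bound, I would first observe that $\mathbf{1}_{\{\tilde\tau^\varepsilon_{\rR(\varepsilon)}>T\}} \leq \mathbf{1}_{\{\tilde\tau^\varepsilon_{\rR(\varepsilon)}>t\}}$ for every $t\in[0,T]$ and that on the latter event $\yY^\varepsilon(t)=\yY^\varepsilon(t\wedge\tilde\tau^\varepsilon_{\rR(\varepsilon)})$, so it suffices to bound $\sup_{t\in[0,T]}\bar\EE|\yY^\varepsilon(t\wedge\tilde\tau^\varepsilon_{\rR(\varepsilon)})|^2$. Applying It\^{o}'s formula to the stopped squared norm and invoking the combined dissipativity \eqref{eq: dissipativity on the coefficients}, the drift from $f$, the Brownian quadratic variation from $g$, and the $\nu$-compensated jump quadratic variation from $h$ consolidate into the single package $\frac{1}{\varepsilon}(-\beta_1|\yY^\varepsilon|^2 + \beta_2\|\xX^\varepsilon_s\|_\infty^2)$. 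The control perturbations $\frac{2}{\varepsilon}\langle\yY^\varepsilon, g\xi^\varepsilon_2\rangle$, $\frac{2}{\varepsilon}\langle\yY^\varepsilon, \int_\XX h(\varphi^\varepsilon-1)\nu\rangle$, and the jump-compensator correction $\frac{1}{\varepsilon}\int_\XX|h|^2(\varphi^\varepsilon-1)\nu$ are split via Young's inequality with a parameter balanced against the dissipation rate $\beta_1/\varepsilon$, so that the $\yY^\varepsilon$-factors are absorbed into $-\beta_1|\yY^\varepsilon|^2/(2\varepsilon)$. Integrating the resulting differential inequality against the exponential kernel $e^{-\beta_1(t-s)/(2\varepsilon)}$ and feeding in the slow-variable bound gives
\begin{equation*}
\sup_{t\in[0,T]}\bar\EE|\yY^\varepsilon(t\wedge\tilde\tau^\varepsilon_{\rR(\varepsilon)})|^2 \leq C\bigl(1+|y|^2+\bar\EE\bigl[\sup_{-\tau\le s\le\tilde\tau^\varepsilon_{\rR(\varepsilon)}}|\xX^\varepsilon(s)|^2\bigr]\bigr),
\end{equation*}
for a constant $C$ independent of $\varepsilon$ and of the controls.

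The hard part will be the careful treatment of the jump-control terms $\int_\XX c(\xX^\varepsilon_s,z)(\varphi^\varepsilon-1)\nu(dz)$ and $\int_\XX|h|^2(\varphi^\varepsilon-1)\nu(dz)$ under the tightened regime $L_T(\varphi^\varepsilon)\leq Ma^2(\varepsilon)$: converting the entropy-type control of $\varphi^\varepsilon-1$ into $L^2$-type estimates requires the two-regime split $\{\varphi^\varepsilon\leq 2\}$ versus $\{\varphi^\varepsilon>2\}$ as in Lemma 3.2 of \cite{BDG15}, which in turn critically uses the exponential-tail integrability \eqref{eq: integrability condition measure}. A secondary subtlety is that in the fast-variable estimate the crude pointwise bound $\|\xX^\varepsilon_s\|_\infty\leq\rR(\varepsilon)$ should never be invoked (as it diverges with $\varepsilon$); one must instead orchestrate the Gronwall argument so that it makes contact with the uniform $L^2$-bound from the slow-variable step, and the balancing of Young's inequality with the dissipation rate $\beta_1/\varepsilon$ must be delicate enough that the leftover control-forcing terms remain $O(1)$ after convolution with the exponential smoothing kernel.
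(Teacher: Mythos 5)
Your overall architecture matches the paper's closely: It\^{o}'s formula for $|\xX^\varepsilon|^2$ and $|\yY^\varepsilon|^2$, the dissipativity hypotheses to produce negative drift, the localization to the event $\{T<\tilde\tau^\varepsilon_{\rR(\varepsilon)}\}$, Gronwall, Burkholder--Davis--Gundy for the martingale parts, and the entropy-to-$L^2$ conversion via Lemma 3.2 of \cite{BDG15} (materialized in Lemma~\ref{lemma: integrability controls}). The slow-variable step is essentially the paper's argument; the only cosmetic difference is that you propose Young's inequality for the controlled drifts where the paper substitutes $\|\xX^\varepsilon_s\|_\infty<\rR(\varepsilon)$ on the localized event and then invokes $a(\varepsilon)\rR^2(\varepsilon)\to 0$. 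Both are fine, and your use of $a(0,y)=0$ in \eqref{eq: dissipativity of the mixing coefficient} with $\tilde\zeta\equiv 0$ is exactly the paper's $I_1$ estimate.

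The genuine divergence is in the fast-variable step, and your stated methodological constraint is actually contradicted by the paper's own proof. You write that the crude pointwise bound $\|\xX^\varepsilon_s\|_\infty\le\rR(\varepsilon)$ ``should never be invoked'' there because it diverges; the paper invokes precisely this bound when it estimates $J_1\le -\frac{\beta_1}{\varepsilon}\int_0^t|\yY^\varepsilon(s)|^2\,ds+\beta_2\frac{\rR^2(\varepsilon)}{\varepsilon}$, and then counts on the Gronwall kernel $e^{-t/\varepsilon}$ to overwhelm the divergent prefactor $\rR^2(\varepsilon)/\varepsilon$ for $t>0$ (the paper's conclusion (\ref{eq: prop controlled processes finalY}) is a pointwise-in-$t$ vanishing bound, not a uniform-in-$t$ one). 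Your alternative --- avoiding the $\rR(\varepsilon)$ bound entirely and instead convolving the slow-variable $L^2$ bound $\Gamma_1(M)$ against the exponential kernel $e^{-\beta_1(t-s)/(2\varepsilon)}$, whose $L^1$-mass in $s$ is $O(\varepsilon)$ and so cancels the $1/\varepsilon$ prefactor --- is conceptually cleaner and gives a bound of the form you announce. But note a gap that you flag but do not close: after Young's inequality, the control forcing $\frac{C}{\varepsilon}|\xi^\varepsilon_2(s)|^2$ (and its jump analogue) is controlled only in $L^1_t$ with total mass $\sim a^2(\varepsilon)/\varepsilon\to\infty$. Because $|\xi^\varepsilon_2|^2$ can in principle concentrate near any given $t$, the convolution with the $\varepsilon$-width kernel is not trivially $O(1)$, and the ``delicate balance'' you invoke in your final paragraph needs an argument (e.g., the paper's Cauchy--Schwarz route $J_2\le \frac{Ca(\varepsilon)}{\varepsilon}(1+\int_0^t|\yY^\varepsilon(s)|^2\,ds)$, which puts the control's $L^2$-mass into a prefactor of the Gronwall kernel rather than a forcing rate). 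As written, that step is the missing piece, and it is also where the paper's own proof is terse; you should either adopt the paper's treatment of $J_2$, $J_3$, $J_5$ or supply the convolution estimate explicitly.
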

The proof follows from applying sucessfully Ito's formula, BDG inequalities and Lemma \ref{lemma: integrability controls} presented in subsubsection \ref{subsubsection: integrability controls} of the Appendix.

\begin{proposition} \label{proposition: a priori bound averaged  slow variable}
Fix $M>0$, $\rR: (0, \infty) \longrightarrow (0, \infty)$ satisfying the hypotheses of Proposition \ref{proposition: localization technique to the bounded case} and for every $\varepsilon>0$ let $\bar \tau^\varepsilon_{\rR(\varepsilon)}$ be defined by (\ref{eq: first exit time bar X}). Under Hypotheses \ref{condition: the measure}-\ref{condition: condition coeffi MDP- differentiablity} there exists some $\varepsilon_0>0$ such that the following holds:
\begin{align} \label{eq: a priori bound averaged slow variable}
\Gamma_2(M) := \displaystyle \sup_{0 < \varepsilon < \varepsilon_0}  \displaystyle \sup_{(\xi,\varphi) \in \tilde \uU^M_\varepsilon \times \uU^M_{+, \varepsilon}} \bar \EE \Big [ \displaystyle \sup_{-\tau \leq t \leq \bar \tau^\varepsilon_{\rR(\varepsilon)}} |\bar \xX^{\varepsilon}(t)|^2 \Big ] < \infty.
\end{align}
\end{proposition}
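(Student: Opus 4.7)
The plan is to carry out a direct stochastic energy estimate on the stopped process $\bar\xX^\varepsilon(\,\cdot\,\wedge\bar\tau^\varepsilon_{\rR(\varepsilon)})$. The decisive simplification compared with Proposition~\ref{proposition: a priori bound controlled processes} is that the dynamics~\eqref{eq: Khasminkii- controlled averaged variable} is completely decoupled from any fast component: the coefficient $\bar a$ is Lipschitz, and hence of sublinear growth, by Proposition~\ref{proposition: bar a is Lipschitz continuous}, while $\sigma$ and $\int_\XX|c(\,\cdot\,,z)|\nu(dz)$ enjoy the same sublinear growth by Remark~\ref{remark: sublinear growth}. No averaging/ergodic argument is therefore needed at this stage: each term arising in the It\^o expansion of $|\bar\xX^\varepsilon|^2$ can be bounded purely in terms of $\sup_{s}|\bar\xX^\varepsilon(s)|^2$ and of the control energies, and the estimate is closed by Gr\"onwall.

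More precisely, I would apply It\^o's formula to $|\bar\xX^\varepsilon(t\wedge\bar\tau^\varepsilon_{\rR(\varepsilon)})|^2$ and treat the resulting six terms as follows. The drift $2\langle\bar\xX^\varepsilon(s),\bar a(\bar\xX^\varepsilon_s)\rangle$ is dominated by $C(1+\|\bar\xX^\varepsilon_s\|_\infty^2)$ through sublinear growth and Young's inequality. The control drift $2\langle\bar\xX^\varepsilon(s),\sigma(\bar\xX^\varepsilon_s)\xi^\varepsilon_1(s)\rangle$ is handled by Cauchy--Schwarz, using that $\xi^\varepsilon\in\tilde\uU^M_\varepsilon$ yields $\int_0^T|\xi^\varepsilon(s)|^2\,ds\le 2Ma^2(\varepsilon)\le 2M$ for $\varepsilon$ small. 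The jump-rate drift $2\langle\bar\xX^\varepsilon(s),\int_\XX c(\bar\xX^\varepsilon_s,z)(\varphi^\varepsilon-1)\nu(dz)\rangle$ is dominated, via a Fenchel--Legendre inequality for the convex function $\ell$ in the form used in Lemma~3.1 of \cite{BDG15}, by $C(1+\|\bar\xX^\varepsilon_s\|_\infty^2)+C\int_\XX\ell(\varphi^\varepsilon(s,z))\nu(dz)$, whose time integral is bounded by $L_T(\varphi^\varepsilon)\le Ma^2(\varepsilon)\le M$. The It\^o correction $\varepsilon|\sigma(\bar\xX^\varepsilon_s)|^2$ and the Poisson correction $\varepsilon^2\int_\XX|c(\bar\xX^\varepsilon_{s-},z)|^2\varphi^\varepsilon(s,z)\nu(dz)$ are treated analogously and moreover carry an extra factor of $\varepsilon$; the remaining two genuine martingales are controlled by the Burkholder--Davis--Gundy inequality, producing, after passing the supremum inside the expectation, a contribution of the form $\delta\,\bar\EE\bigl[\sup_s|\bar\xX^\varepsilon(s)|^2\bigr]$ which is absorbed into the left-hand side by choosing $\delta$ small.

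Assembling these bounds I expect to obtain an inequality of the form
\begin{equation*}
\bar\EE\Bigl[\sup_{-\tau\le s\le t\wedge\bar\tau^\varepsilon_{\rR(\varepsilon)}}|\bar\xX^\varepsilon(s)|^2\Bigr]\le C_1+C_2\int_0^t\bar\EE\Bigl[\sup_{-\tau\le u\le s\wedge\bar\tau^\varepsilon_{\rR(\varepsilon)}}|\bar\xX^\varepsilon(u)|^2\Bigr]\,ds,
\end{equation*}
with $C_1,C_2>0$ depending only on $M,T,L,\Lambda,\|\chi\|_\infty$ and $\int_\XX(1\wedge|z|^2)\nu(dz)$, hence independent of $\varepsilon$ and of the controls $(\xi^\varepsilon,\varphi^\varepsilon)\in\tilde\uU^M_\varepsilon\times\uU^M_{+,\varepsilon}$. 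Gr\"onwall's lemma then yields $\Gamma_2(M)<\infty$. The step I expect to require the most care is precisely the control of the nonlocal drift $\int_\XX c(\bar\xX^\varepsilon_s,z)(\varphi^\varepsilon(s,z)-1)\nu(dz)$: extracting from the sole energy bound $L_T(\varphi^\varepsilon)\le Ma^2(\varepsilon)$ a quantitative $L^1$-type control on $\varphi^\varepsilon-1$ requires the standard Fenchel--Legendre splitting according to whether $|\varphi^\varepsilon-1|$ is small or large, which is technically heavier than in the purely Brownian case.
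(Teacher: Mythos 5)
Your proposal is correct and its overall scaffolding (It\^o on $|\bar\xX^\varepsilon|^2$, control the drift/control/It\^o-correction terms via the energy bounds, Gr\"onwall) matches the paper's, which is obtained by specialising the proof of Proposition~\ref{proposition: a priori bound controlled processes}. There are, however, two genuine points of departure. First, for the main drift term the paper does not invoke sublinear growth of $\bar a$ but the one-sided dissipativity~(\ref{eq: dissipativity of the mixing coefficient}), which (since $a(0,y)=0$) is inherited by $\bar a$ and yields a \emph{negative} contribution $-\beta_1\int_0^t\|\bar\xX^\varepsilon_s\|_\infty^2\,ds$; your Young-plus-sublinearity route gives a positive $C(1+\|\bar\xX^\varepsilon_s\|_\infty^2)$, which is weaker but perfectly sufficient for a finite $\Gamma_2(M)$. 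Second, and more substantively, the paper exploits the localization $\|\bar\xX^\varepsilon_s\|_\infty<\rR(\varepsilon)$ \emph{inside} the stopped interval to show that the control drift, the It\^o corrections, and the stochastic integrals are all of order $a(\varepsilon)\rR^2(\varepsilon)$ or $\varepsilon\rR^2(\varepsilon)$, hence vanish as $\varepsilon\to0$; the Gr\"onwall weight that remains is the $\bar\PP$-a.s.\ $\varepsilon$-uniformly $L^1$-bounded function $\Theta^\varepsilon(s)=\int_\XX|z||\varphi^\varepsilon(s,z)-1|\nu(dz)$. Your route instead uses BDG plus an absorption in $\delta$, which is standard and fine here, but note that the Gr\"onwall inequality you display cannot quite hold with a \emph{constant} $C_2$: the weight necessarily carries $|\xi^\varepsilon_1(s)|^2$ and $\Theta^\varepsilon(s)$, and the conclusion follows because their time integrals are bounded by $2Ma^2(\varepsilon)$ and $\tau(a(\varepsilon)+T)$ (cf.~(\ref{eq: lemma integrability controls- estimate 1.5})) uniformly over the admissible controls, so the Gr\"onwall factor is $\exp$ of a quantity bounded in $\varepsilon$. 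Once this small imprecision is fixed, both arguments yield the same uniform bound; the paper's localization-based version buys the extra information that the non-drift contributions vanish as $\varepsilon\to 0$, which is reused in later estimates, whereas yours is shorter and more generic.
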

 \no The proof of Proposition \ref{proposition: a priori bound averaged  slow variable} follows analogously to the proof of (\ref{eq: a priori bound controlled processes}). For this reason we omit it.\\
 
 \begin{lemma} \label{lemma: a priori bound averaged deviation variable}
 Fix $M>0$, $\rR: (0,\infty) \longrightarrow (0, \infty)$ under the hypotheses of Proposition \ref{proposition: localization technique to the bounded case} and for every $\varepsilon>0$ let $\bar \tau^\varepsilon_{\rR(\varepsilon)}$ defined by (\ref{eq: first exit time bar X}).  Under the assumptions of Hypotheses \ref{condition: the measure}-\ref{condition: condition coeffi MDP- differentiablity} there exists some $\varepsilon_0>0$ such that the following holds:
\begin{align} \label{eq: a priori bound averaged deviation variable}
 \Gamma_3(M) := \displaystyle \sup_{0 < \varepsilon < \varepsilon_0}  \displaystyle \sup_{(\xi,\varphi) \in \tilde \uU^M_\varepsilon \times \uU^M_{+, \varepsilon}} \bar \EE \Big [ \displaystyle \sup_{-\tau \leq t \leq \bar \tau^\varepsilon_{\rR(\varepsilon)}} |\bar \zZ^{\varepsilon}(t)|^2 \Big ] < \infty.
\end{align}
\end{lemma}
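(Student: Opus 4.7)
The strategy is to derive a Gr\"onwall-type integral inequality for $t \mapsto \bar \EE\big[\sup_{-\tau \leq s \leq t \wedge \bar \tau^\varepsilon_{\rR(\varepsilon)}} |\bar \zZ^\varepsilon(s)|^2\big]$ with constants uniform in $0 < \varepsilon < \varepsilon_0$ and in $(\xi^\varepsilon, \varphi^\varepsilon) \in \tilde \uU^M_\varepsilon \times \uU^M_{+,\varepsilon}$. Subtracting (\ref{eq: the averaged ode}) from (\ref{eq: Khasminkii- controlled averaged variable}) and dividing by $a(\varepsilon)$ gives, for $t \in [0,T]$,
\begin{align*}
\bar \zZ^\varepsilon(t) &= \int_0^t \frac{\bar a(\bar \xX^\varepsilon_s) - \bar a(\bar X^{0,\chi}_s)}{a(\varepsilon)}\, ds + \int_0^t \sigma(\bar \xX^\varepsilon_s)\, \frac{\xi^\varepsilon_1(s)}{a(\varepsilon)}\, ds \\
&\quad + \int_0^t \int_\XX c(\bar \xX^\varepsilon_s, z)\, \frac{\varphi^\varepsilon(s,z)-1}{a(\varepsilon)}\, \nu(dz)\, ds + \varepsilon^{\theta/2} \int_0^t \sigma(\bar \xX^\varepsilon_s)\, dB^1(s) \\
&\quad + \varepsilon^{(1+\theta)/2} \int_0^t \int_\XX c(\bar \xX^\varepsilon_{s-}, z)\, \tilde N^{\frac{1}{\varepsilon}\varphi^\varepsilon}(ds,dz),
\end{align*}
with $\bar \zZ^\varepsilon_0 = 0$, using $\sqrt{\varepsilon}/a(\varepsilon) = \varepsilon^{\theta/2}$ and $\varepsilon/a(\varepsilon) = \varepsilon^{(1+\theta)/2}$ from the parametrisation (\ref{eq:parametrization- the speed a}).

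The drift integrand is dominated by $L\|\bar \zZ^\varepsilon_s\|_\infty$ via the Lipschitz continuity of $\bar a$ (Proposition \ref{proposition: bar a is Lipschitz continuous}) together with the identity $\|\bar \xX^\varepsilon_s - \bar X^{0,\chi}_s\|_\infty = a(\varepsilon)\|\bar \zZ^\varepsilon_s\|_\infty$, so this term contributes at most $CT \int_0^t \|\bar \zZ^\varepsilon_s\|_\infty^2\, ds$ after squaring. For the two control terms I decompose $\sigma(\bar \xX^\varepsilon_s) = \sigma(\bar X^{0,\chi}_s) + [\sigma(\bar \xX^\varepsilon_s) - \sigma(\bar X^{0,\chi}_s)]$ and analogously for $c$. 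The frozen pieces, evaluated at the deterministic bounded path $\bar X^{0,\chi}$, combined with Cauchy--Schwarz and the energy bound $\int_0^T |\xi^\varepsilon|^2\,ds \leq Ma^2(\varepsilon)$, produce an $O_\varepsilon(1)$ contribution for the $\xi^\varepsilon_1$-piece. For the Poisson control piece I further split $\psi^\varepsilon = \psi^\varepsilon \textbf{1}_{\{|\psi^\varepsilon|\leq 1/a(\varepsilon)\}} + \psi^\varepsilon \textbf{1}_{\{|\psi^\varepsilon|>1/a(\varepsilon)\}}$: by Remark \ref{remark: on automatic tightness of a weak compact collection in L2} the truncated part is bounded in $L^2(\nu_T)$ by $\sqrt{M\kappa_2(1)}$, while Lemma~3.2 of \cite{BDG15} gives $\int_0^T \int_\XX |\varphi^\varepsilon - 1|\textbf{1}_{\{|\varphi^\varepsilon - 1|>1\}} \nu(dz)\,ds \lesssim L_T(\varphi^\varepsilon) \leq Ma^2(\varepsilon)$, so after dividing by $a(\varepsilon)$ the tail is $O(a(\varepsilon))$ in $L^1(\nu_T)$; the exponential integrability in Hypothesis \ref{condition: the measure} secures $\int_\XX |z|^p \nu(dz) < \infty$ for all $p \geq 2$, hence together with the sublinear pointwise growth of $c$ the frozen integrand $c(\bar X^{0,\chi}_\cdot, \cdot)$ sits uniformly in $L^2(\nu_T)$. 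The "difference" pieces inherit a factor $La(\varepsilon)\|\bar \zZ^\varepsilon_s\|_\infty$, so after Cauchy--Schwarz and the energy bounds they contribute $\lesssim a^2(\varepsilon) \int_0^t \|\bar \zZ^\varepsilon_s\|_\infty^2\, ds$ with a vanishing prefactor.

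The two stochastic integrals are handled via the Burkholder--Davis--Gundy inequality; on the localization $\{s \leq \bar \tau^\varepsilon_{\rR(\varepsilon)}\}$ one has $|\sigma(\bar \xX^\varepsilon_s)| \leq L_1(1+\rR(\varepsilon))$ and $|c(\bar \xX^\varepsilon_s, z)| \leq L_1(1+\rR(\varepsilon))|z|$ pointwise. Writing the $\bar \PP$-compensator of $\tilde N^{\frac{1}{\varepsilon}\varphi^\varepsilon}$ as $\frac{1}{\varepsilon}\varphi^\varepsilon \nu \otimes ds = \frac{1}{\varepsilon}(1 + a(\varepsilon)\psi^\varepsilon)\nu\otimes ds$ and reusing the two-scale split of $\psi^\varepsilon$, the expected square supremum of each stochastic integral is bounded by a constant multiple of $\varepsilon^\theta \rR^2(\varepsilon)$. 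For the admissible choice $\rR(\varepsilon) = a(\varepsilon)^{-1/4}$ one has $\varepsilon^\theta \rR^2(\varepsilon) = \varepsilon^{(5\theta-1)/4} \to 0$ since $\theta > 1/2 > 1/5$, so these martingale contributions remain $O_\varepsilon(1)$. Collecting every bound, taking the supremum over $[-\tau, t\wedge \bar \tau^\varepsilon_{\rR(\varepsilon)}]$ and the expectation, and applying Gr\"onwall's lemma yields the uniform bound (\ref{eq: a priori bound averaged deviation variable}). The main technical difficulty is the Poisson control integral involving $\psi^\varepsilon$, since the latter need not belong uniformly to $L^2(\nu_T)$; the truncation together with the entropy bound $L_T(\varphi^\varepsilon) \leq Ma^2(\varepsilon)$ provided by the weak convergence framework is precisely what makes the estimate close.
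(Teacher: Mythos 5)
Your Gr\"onwall strategy is sound and, since the paper omits this proof, let me confirm it closes and flag the two places where the details need a little more care. The drift cancellation $\frac{1}{a(\varepsilon)}(\bar a(\bar\xX^\varepsilon_s)-\bar a(\bar X^{0}_s))$ correctly produces the Gr\"onwall kernel $\|\bar\zZ^\varepsilon_s\|_\infty$ via Proposition \ref{proposition: bar a is Lipschitz continuous}, and the ``frozen $+$ difference'' split of $\sigma$ and $c$ together with the truncation $\psi^\varepsilon=\psi^\varepsilon\textbf{1}_{\{|\psi^\varepsilon|\leq 1/a(\varepsilon)\}}+\psi^\varepsilon\textbf{1}_{\{|\psi^\varepsilon|>1/a(\varepsilon)\}}$ is exactly the device the paper uses elsewhere (cf.\ the proof of Proposition \ref{proposition: identification weak limit for the averaged deviation}). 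However, your tail bound is phrased as an $L^1(\nu_T)$ estimate on $|\psi^\varepsilon|\textbf{1}_{\text{tail}}$ \emph{without} the $|z|$-weight, which does not pair directly with the frozen integrand $c(\bar X^{0}_\cdot,\cdot)\sim|z|$; to close this you must invoke the $|z|$-weighted estimate (\ref{eq: lemma integrabilitu controls- estimate 2}) from Lemma \ref{lemma: integrability controls} (whose proof, (\ref{eq: lemma int controls final}), is where the exponential moment condition really enters, splitting further on $|z|\lessgtr 1$ and using Young's inequality), rather than the unweighted entropy bound from Lemma~3.2 of \cite{BDG15}. Similarly, for the $c$-difference piece the argument leading to the claimed $a^2(\varepsilon)\int_0^t\|\bar\zZ^\varepsilon_s\|_\infty^2\,ds$ bound requires an extra Cauchy--Schwarz in $z$ on the truncated part and a small-prefactor absorption of the tail into the left-hand side; this works but is not literally ``Cauchy--Schwarz and the energy bounds'' as written. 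Finally, note that the frozen$/$difference decomposition for the $\xi^\varepsilon$-term is avoidable: since Proposition \ref{proposition: a priori bound averaged  slow variable} already gives $\Gamma_2(M)<\infty$, one can bound $|\sigma(\bar\xX^\varepsilon_s)|\lesssim 1+\|\bar\xX^\varepsilon_s\|_\infty$ and use $\bar\EE[\sup_{[-\tau,\bar\tau^\varepsilon_{\rR(\varepsilon)}]}|\bar\xX^\varepsilon|^2]\leq\Gamma_2(M)$ directly, which is likely the ``straightforward'' shortcut the authors had in mind; your more systematic decomposition buys nothing extra here but is harmless and in keeping with the paper's style.
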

\no The proof of (\ref{eq: a priori bound averaged deviation variable}) is straightforward and we omit it.

\subsubsection{Identification of the weak limit}
\no Given  $M < \infty$  and $\varepsilon>0$ let $\xi^\varepsilon \in \tilde \uU^M_\varepsilon$, $\varphi^\varepsilon \in \uU^M_{+,\varepsilon}$ and write  $\psi^\varepsilon:= \frac{\varphi^\varepsilon-1}{a(\varepsilon)}$. Assume that for some $\beta \in (0,1)$ the following convergences (in law) are satisfied
 \begin{align*}
 \psi^\varepsilon \textbf{1}_{\{ |\psi^\varepsilon| \leq \frac{\beta}{a(\varepsilon)}\}} \Rightarrow \psi \quad \text{and} \quad  \frac{1}{a(\varepsilon)} \xi^\varepsilon &\Rightarrow \xi, \quad \text{as }\varepsilon \ra 0. 
\end{align*}
Then the following result holds.
\begin{proposition} \label{proposition: identification weak limit for the averaged deviation}
Let the standing assumptions of Theorem \ref{theorem: MDP for the slow component } to hold for some $\nu \in \MM$ and $\xi \in \cC$. For every $\varepsilon>0$ let  $(\bar \zZ^\varepsilon(t))_{t \in [- \tau,T]}$ be  defined by (\ref{eq: the averaged controlled deviation}). Then the family $(\bar \zZ^\varepsilon, \frac{1}{a(\varepsilon)} \xi^\varepsilon, \psi^\varepsilon \textbf{1}_{\{ |\psi^\varepsilon| \leq \frac{\beta}{a(\varepsilon)}\}})_{\varepsilon>0}$ is tight in $\DD([-\tau,T]; \RR^d) \times \tilde B_2(M) \times B_2(\sqrt{M \kappa_2(1)})$ for some $\beta \in (0,1)$ and $\kappa_2(1)$ given in the Remark \ref{remark: on automatic tightness of a weak compact collection in L2}. Furthermore any limit point in law $(\bar Z, \xi, \psi)$ satisfies (\ref{eq: the controlled averaged eq for mdp}).
\end{proposition}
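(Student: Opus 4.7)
The plan is to combine Prokhorov's theorem and the Skorokhod representation theorem to extract almost surely convergent subsequences, pass to the limit in the explicit equation satisfied by $\bar \zZ^\varepsilon$, and then invoke uniqueness for the skeleton equation (\ref{eq: the controlled averaged eq for mdp}) to identify the limit. Tightness of the two control components is automatic: by Remark \ref{remark: on automatic tightness of a weak compact collection in L2} and the Banach--Alaoglu theorem, $\tfrac{1}{a(\varepsilon)}\xi^\varepsilon$ takes values $\bar \PP$-a.s.\ in the weakly compact ball $\tilde B_2(M)$ and $\psi^\varepsilon \textbf{1}_{\{|\psi^\varepsilon| \le \beta/a(\varepsilon)\}}$ in $B_2(\sqrt{M\kappa_2(1)})$, so the actual work lies in proving tightness of $(\bar \zZ^\varepsilon)_{\varepsilon>0}$ in $\DD([-\tau, T]; \RR^d)$.

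Subtracting the averaged equation (\ref{eq: the averaged ode}) from (\ref{eq: Khasminkii- controlled averaged variable}) and dividing by $a(\varepsilon)$ yields the decomposition
\begin{align*}
\bar \zZ^\varepsilon(t) &= \int_0^t \frac{\bar a(\bar \xX^\varepsilon_s) - \bar a(\bar X^{0,\chi}_s)}{a(\varepsilon)} ds + \int_0^t \sigma(\bar \xX^\varepsilon_s) \frac{\xi^\varepsilon_1(s)}{a(\varepsilon)} ds + \int_0^t \int_\XX c(\bar \xX^\varepsilon_s, z) \psi^\varepsilon(s,z) \nu(dz) ds \\
&\quad + \frac{\sqrt{\varepsilon}}{a(\varepsilon)} \int_0^t \sigma(\bar \xX^\varepsilon_s) dB^1(s) + \frac{\varepsilon}{a(\varepsilon)} \int_0^t \int_\XX c(\bar \xX^\varepsilon_{s-}, z) \tilde N^{\frac{1}{\varepsilon} \varphi^\varepsilon}(ds,dz),
\end{align*}
with $\bar \zZ^\varepsilon_0 = 0$; the parametrization (\ref{eq:parametrization- the speed a}) gives $\sqrt{\varepsilon}/a(\varepsilon) = \varepsilon^{\theta/2} \to 0$ and $\varepsilon/a(\varepsilon) = \varepsilon^{(1+\theta)/2} \to 0$. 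I would localize using the stopping times $\bar \tau^\varepsilon_{\rR(\varepsilon)}$ from (\ref{eq: first exit time bar X}); Proposition \ref{proposition: localization technique to the bounded case} gives $\bar \PP(\bar \tau^\varepsilon_{\rR(\varepsilon)} \le T) \to 0$, reducing everything to the stopped process. On that interval, Proposition \ref{proposition: a priori bound averaged  slow variable} and Lemma \ref{lemma: a priori bound averaged deviation variable} furnish the uniform second-moment bounds, and Aldous's criterion follows from estimates of $\bar \EE|\bar \zZ^\varepsilon(\tau + h) - \bar \zZ^\varepsilon(\tau)|^2$ for $\bar \FF$-stopping times $\tau$: sublinear growth (Remark \ref{remark: sublinear growth}) and Cauchy--Schwarz handle the drift-type terms, Doob's inequality handles the rescaled Brownian integral, and Bernstein's inequality for c\`adl\`ag martingales (Theorem 3.3 of \cite{DZ01}) handles the rescaled Poisson integral.

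Marginal tightness combined with Prokhorov and Skorokhod gives, along a subsequence and on an enlarged probability space, $\bar \zZ^\varepsilon \to \bar Z$ in $\DD([-\tau, T]; \RR^d)$, $\xi^\varepsilon/a(\varepsilon) \to \xi$ weakly in $L^2([0,T]; \RR^n)$, and $\psi^\varepsilon \textbf{1}_{\{\cdots\}} \to \psi$ weakly in $L^2(\nu_T)$, all $\bar \PP$-a.s. Since $\bar \xX^\varepsilon - \bar X^{0,\chi} = a(\varepsilon) \bar \zZ^\varepsilon$ and $\bar \zZ^\varepsilon$ is bounded in $L^2$ by Lemma \ref{lemma: a priori bound averaged deviation variable}, one also gets $\bar \xX^\varepsilon \to \bar X^{0,\chi}$ uniformly on $[-\tau, T]$. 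The Lipschitz continuity of $D\bar a$ from Hypothesis \ref{condition: condition coeffi MDP- differentiablity} delivers the quantitative Taylor bound
\begin{align*}
\Big| \bar a(\bar \xX^\varepsilon_s) - \bar a(\bar X^{0,\chi}_s) - D\bar a(\bar X^{0,\chi}_s)(\bar \xX^\varepsilon_s - \bar X^{0,\chi}_s) \Big| \le L_2 \| \bar \xX^\varepsilon_s - \bar X^{0,\chi}_s\|_\infty^2,
\end{align*}
so after division by $a(\varepsilon)$ the remainder is bounded by $L_2 a(\varepsilon) \|\bar \zZ^\varepsilon_s\|_\infty^2$ and vanishes in probability, while the linearized part converges to $\int_0^t D\bar a(\bar X^{0,\chi}_s) \bar Z_s\, ds$ by dominated convergence. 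The rescaled martingale terms vanish by the same Doob/Bernstein estimates used for tightness.

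The main obstacle is passing to the limit in the two integrals coupling $\bar \xX^\varepsilon$ with the weakly convergent controls, $\int_0^t \sigma(\bar \xX^\varepsilon_s) \frac{\xi^\varepsilon_1(s)}{a(\varepsilon)} ds$ and $\int_0^t \int_\XX c(\bar \xX^\varepsilon_s, z) \psi^\varepsilon(s,z) \nu(dz)\, ds$, since the integrand is random and $\varepsilon$-dependent while only weak convergence of the control is available. I would split each integral into a \emph{frozen} piece, with $\bar \xX^\varepsilon$ replaced by $\bar X^{0,\chi}$, and a remainder. The frozen piece converges by weak--strong pairing, as $\sigma(\bar X^{0,\chi}_\cdot)$ and $c(\bar X^{0,\chi}_\cdot, \cdot)$ are deterministic elements of the appropriate $L^2$ spaces against which the weak limits $\xi$ and $\psi$ act continuously. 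The remainder is bounded via Cauchy--Schwarz by $\|\sigma(\bar \xX^\varepsilon_\cdot) - \sigma(\bar X^{0,\chi}_\cdot)\|_{L^2} \cdot \|\xi^\varepsilon_1/a(\varepsilon)\|_{L^2}$ (and similarly for the Poisson term using the $L^1(\nu)$-Lipschitz bound on $c$ from Hypothesis \ref{condition: assumptions for existence uniqueness solution}), which tends to zero thanks to the uniform $L^2$ bound on the controls and the uniform convergence $\bar \xX^\varepsilon \to \bar X^{0,\chi}$. This identifies $\bar Z$ as a solution of (\ref{eq: the controlled averaged eq for mdp}), and the uniqueness established in the proof of Proposition \ref{prop: first condition MDP} pins down the limit, so the whole family converges along the subsequence and the original sequence inherits this convergence.
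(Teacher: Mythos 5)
Your proposal is correct and follows the same overall architecture as the paper's proof: decompose $\bar\zZ^\varepsilon$ into a drift part and a (local) martingale part, localize with $\bar\tau^\varepsilon_{\rR(\varepsilon)}$ to obtain second-moment bounds from Propositions \ref{proposition: localization technique to the bounded case}, \ref{proposition: a priori bound averaged  slow variable} and Lemma \ref{lemma: a priori bound averaged deviation variable}, deduce tightness, apply Prokhorov and Skorokhod, show the stochastic-integral terms vanish, pass to the limit in the drift (Taylor expansion of $\bar a$ via the Lipschitz Fr\'echet derivative, weak--strong pairing for the control terms), and conclude by uniqueness of the skeleton equation.

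The one methodological variation is the tightness step. You invoke Aldous's criterion for the full process $\bar\zZ^\varepsilon$ directly, whereas the paper argues separately: it establishes $C$-tightness of the drift part $A^\varepsilon$ via an explicit modulus-of-continuity bound and the construction of the relatively compact set $\KK_\rho$, establishes compact containment of the quadratic variation $[M^\varepsilon]$, and then combines them through Theorem 6.1.1 of \cite{Kallianpur Xiong} to get tightness of the semimartingale. Both are standard, equivalent tools; your Aldous route is a bit more streamlined while the paper's route makes the $C$-tightness of the drift part explicit, which is subsequently re-used to conclude that the limit has continuous paths. Your explicit ``freeze at $\bar X^{0,\chi}$ plus remainder'' treatment of the weak--strong pairing is also a clean way to handle what the paper bakes into its splitting of $A^\varepsilon$ into $A_1^\varepsilon + A_2^\varepsilon$ (where the frozen coefficients appear in $A_2^\varepsilon$ and are handled via (\ref{eq: lemma integrabilitu controls- dominated convergence}) of Lemma \ref{lemma: integrability controls}). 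Overall the substance is the same.
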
 
\no The proof follows with standard arguments used by the weak convergence approach to moderate deviations principles for stochastic differential equations with jumps. We refer the reader to Lemma 4.9 in the seminal work \cite{BDG15}.

\subsection{The controlled averaging principle}
The main result of this section allows us to identify the weak limit of $(\zZ^\varepsilon)_{\varepsilon >0}$ with the weak limit of the family $(\bar \zZ^\varepsilon)_{\varepsilon>0}$ as $\varepsilon \ra 0$.
\begin{theorem} \label{theorem: controlled averaging principle}
 Let the hypotheses of Theorem \ref{theorem: MDP for the slow component } to hold. Then given the families $(\zZ^\varepsilon)_{\varepsilon>0}$ and $(\bar \zZ^\varepsilon)_{\varepsilon>0}$ defined respectively by (\ref{eq: the controlled deviation}) and (\ref{eq: the averaged controlled deviation})  we have for any $\delta>0$ that 
we have
\begin{align} \label{eq: Khasminkii new averaging principle}
\displaystyle \lim_{\varepsilon \ra 0} \PP \Big ( \displaystyle \sup_{0 \leq t \leq \tilde \tau^\varepsilon_{\rR(\varepsilon)}} |\zZ^\varepsilon(t)- \bar \zZ^\varepsilon(t)|> \delta \Big )=0.
\end{align}
\end{theorem}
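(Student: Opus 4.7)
Since $\zZ^\varepsilon - \bar\zZ^\varepsilon = (\xX^\varepsilon - \bar\xX^\varepsilon)/a(\varepsilon)$, the statement reduces to showing
$$\lim_{\varepsilon \to 0} \bar\PP\Big(\sup_{0\le t \le \tilde\tau^\varepsilon_{\rR(\varepsilon)} \wedge \bar\tau^\varepsilon_{\rR(\varepsilon)}} |\xX^\varepsilon(t) - \bar\xX^\varepsilon(t)| > \delta a(\varepsilon)\Big) = 0,$$
after absorbing the event $\{\bar\tau^\varepsilon_{\rR(\varepsilon)}\le T\}$ or $\{\tilde\tau^\varepsilon_{\rR(\varepsilon)}\le T\}$ using the localization bounds (\ref{eq: localization probability controlled process}) and (\ref{eq: localization probability averaged controlled process}). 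The plan follows Khasminkii's discretization scheme suitably adapted to the functional setting. I partition $[0,T]$ into cells of common length $\Delta(\varepsilon)\to 0$ with $\Delta(\varepsilon)/\varepsilon \to \infty$ (so the rescaled fast clock $\Delta(\varepsilon)/\varepsilon$ is long enough to make the mixing function $\alpha$ of Proposition \ref{proposition: the averaging property for the averaged coefficient} small). On each cell $[t_k,t_{k+1})$ I freeze the slow segment and build two auxiliary processes: $\hat\yY^\varepsilon$ satisfies the fast equation (\ref{eq: Khasminkii- controlled Y variable}) with $\xX^\varepsilon_s$ replaced by $\xX^\varepsilon_{t_k}$, and $\hat\xX^\varepsilon$ satisfies the slow equation (\ref{eq: Khasminki- controlled X variable}) with the same freezing of the drift-segment and with $\yY^\varepsilon$ replaced by $\hat\yY^\varepsilon$. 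Hypothesis \ref{condition: initial delay is Lipschitz} is what allows one to bound the piecewise Lipschitz error $\|\xX^\varepsilon_t - \xX^\varepsilon_{t_k}\|_\infty$ by the right order of $\Delta(\varepsilon)$ on cells that intersect the delay window $[-\tau,0]$.

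Second, I derive a stable $L^2$-bound for the fast deviation. Using the dissipativity (\ref{eq: dissipativity Lipschitz on the coefficients})--(\ref{eq: dissipativity- the last one required }), It\^o's formula applied to $|\yY^\varepsilon - \hat\yY^\varepsilon|^2$ cell by cell, the $L^2$-bound $\|\xi^\varepsilon_2\|_{L^2}\lesssim a(\varepsilon)\sqrt{M}$, the estimate (\ref{eq: lemma integrability controls- estimate 1}) of Lemma \ref{lemma: integrability controls} for the jump controls and the a priori bound of Proposition \ref{proposition: a priori bound controlled processes}, a Gronwall loop with the small parameter $1/\varepsilon$ in front of the dissipation yields
$$\sup_{t\in[0,T]}\bar\EE\Big[|\yY^\varepsilon(t)-\hat\yY^\varepsilon(t)|^2 \mathbf{1}_{\{\tilde\tau^\varepsilon_{\rR(\varepsilon)}>T\}}\Big] \lesssim_{\varepsilon} \frac{\Delta(\varepsilon)}{\varepsilon}\,\Delta(\varepsilon)(1+\rR^2(\varepsilon)),$$
which is summable and $o(a^2(\varepsilon))$ for an appropriate choice $\Delta(\varepsilon)=\varepsilon^{\kappa}$ with $\kappa\in(0,1)$ chosen so that the remaining error pieces below also beat $a(\varepsilon)$.

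Third, for the slow deviation I work directly at the level of probabilities. Splitting $\xX^\varepsilon(t)-\bar\xX^\varepsilon(t)$ into the drift difference $\int_0^t(a(\xX^\varepsilon_s,\yY^\varepsilon(s))-\bar a(\bar\xX^\varepsilon_s))\,ds$, the control-driven Lipschitz terms in $\sigma$ and $c$, and the two martingale terms of scales $\sqrt\varepsilon$ and $\varepsilon$, I bound the martingales by Bernstein's inequality (Theorem 3.3 in \cite{DZ01}), which is applicable thanks to Hypothesis \ref{condition: the measure} providing the exponential moment needed to control the jump martingale up to the stopping time $\tilde\tau^\varepsilon_{\rR(\varepsilon)}\wedge\bar\tau^\varepsilon_{\rR(\varepsilon)}$. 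The Lipschitz terms involving $\xi^\varepsilon_1$ and $\varphi^\varepsilon-1$ are handled by Cauchy--Schwarz and the tightened sublevel controls. The drift difference is split through the intermediate term $\frac{1}{\Delta(\varepsilon)}\int_{t_k}^{t_{k+1}}a(\hat\xX^\varepsilon_{t_k},\hat\yY^\varepsilon(s))\,ds$, so that it decomposes into: (i) an $L^2$-cell-mean error controlled using the fast moment estimate above; (ii) an ergodic cellwise error controlled by Proposition \ref{proposition: the averaging property for the averaged coefficient} of order $\alpha(\Delta(\varepsilon)/\varepsilon)(1+\rR^2(\varepsilon))$; (iii) a Lipschitz-$\bar a$ term that closes the loop with $\sup_{s\le t}|\xX^\varepsilon(s)-\bar\xX^\varepsilon(s)|$. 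A stochastic Gronwall argument on the probability $\bar\PP(\sup_{s\le t}|\xX^\varepsilon-\bar\xX^\varepsilon|>\delta a(\varepsilon))$, together with Markov's inequality applied to the drift pieces with the $a(\varepsilon)^{-1}$ scaling absorbed into the smallness of $\Delta(\varepsilon)$, delivers (\ref{eq: Khasminkii new averaging principle}).

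The main obstacle is, as the authors emphasize in the introduction, to obtain the slow deviation estimate at the \emph{refined} scale $a(\varepsilon)$ (moderate) rather than the order~$1$ scale of the classical Khasminkii averaging principle: the natural $L^2$-bounds for $|\xX^\varepsilon - \hat\xX^\varepsilon|$ are of order $\sqrt{\Delta(\varepsilon)}$, which need not beat $a(\varepsilon)$. The workaround, already hinted at in the strategy paragraph, is to estimate the probability directly (Markov for the drift and Bernstein for the martingales, exploiting Hypothesis \ref{condition: the measure}) and to tune $\Delta(\varepsilon)$ so that the three small parameters $\Delta(\varepsilon)$, $\varepsilon/\Delta(\varepsilon)$ via $\alpha$, and $\varepsilon\rR^2(\varepsilon)$ are all smaller than any fixed multiple of $a(\varepsilon)$. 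The functional dependence of the coefficients on the whole segment process forces an asymmetric treatment of slow (probability bounds) vs. fast (moment bounds) deviations, which is the genuine novelty compared with the non-delay, diffusion-only Khasminkii framework.
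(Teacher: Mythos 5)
Your proposal follows essentially the paper's proof architecture: partition $[0,T]$ into cells of length $\Delta(\varepsilon)$ with $\Delta\to 0$ and $\Delta/\varepsilon\to\infty$, build the Khasminkii auxiliary pair $(\hat\xX^\varepsilon,\hat\yY^\varepsilon)$ by freezing the slow segment at $\xX^\varepsilon_{s_\Delta}$ (eq.~(\ref{eq: Khasminkii -the fast variable with frozen component})--(\ref{eq: Khasminkki- the slow variable with frozen component})), control the fast deviation in $L^2$ via dissipativity, control the slow deviation via probability estimates (Bernstein on the segment process as in Lemma~\ref{lemma: Khasminkii segment process estimate}, Markov/BDG on the remaining pieces), split the drift into a cell-wise ergodic error handled by Proposition~\ref{proposition: the averaging property for the averaged coefficient} and Lipschitz remainders, and finally patch everything together through the localization times $\tilde\tau^\varepsilon_{\rR(\varepsilon)}$, $\bar\tau^\varepsilon_{\rR(\varepsilon)}$. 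This is precisely the decomposition $|\xX^\varepsilon-\bar\xX^\varepsilon|\le|\xX^\varepsilon-\hat\xX^\varepsilon|+|\hat\xX^\varepsilon-\bar\xX^\varepsilon|$ of Propositions~\ref{prop: Khasminkii averaging controlled-part1} and~\ref{prop: Khasminkii averaging controlled-part2}, and you correctly emphasize the paper's structural novelty (moment bounds for the fast deviation versus probability bounds for the slow segment, forced by the functional dependence and the refined $a(\varepsilon)$ scale).

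One piece does not close as you have written it: your stated fast-deviation bound
$\sup_t\bar\EE\big[|\yY^\varepsilon-\hat\yY^\varepsilon|^2\mathbf{1}\big]\lesssim\tfrac{\Delta^2}{\varepsilon}(1+\rR^2(\varepsilon))$
is too weak. With $a(\varepsilon)=\varepsilon^{(1-\theta)/2}$ and $\rR^2(\varepsilon)=\varepsilon^{-(1-\theta)/4}$, making $\tfrac{\Delta^2}{\varepsilon}\rR^2=o(a^2)$ would force $\Delta=\varepsilon^\kappa$ with $\kappa>(9-5\theta)/8$, which for every $\theta\in(\tfrac12,1)$ is larger than the ceiling $\kappa<\tfrac12$ imposed by the Bernstein/Chebyshev step of Lemma~\ref{lemma: Khasminkii segment process estimate} (the $N_\varepsilon K_\varepsilon\simeq\Delta^{-2}$ union bound produces a factor $\varepsilon|\ln\varepsilon|^q/\Delta^2$). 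The paper's Lemma~\ref{lemma: Khasminkki} escapes this clash because the dissipative Gronwall with rate $\beta_1/\varepsilon$ and the restart $\hat\yY^\varepsilon(t_\Delta)=\yY^\varepsilon(t_\Delta)$ yields a factor $e^{-c\Delta/\varepsilon}$ multiplying the forcing $C_2(\varepsilon)$; since $\Delta/\varepsilon\to\infty$, this exponential kills any polynomial blow-up from $\rR^2$ and $1/\varepsilon$. You do flag the $1/\varepsilon$ dissipation as the mechanism, so I take the displayed bound as a slip, but the exponential decay is not cosmetic -- without it the parameter window for $\Delta$ is empty and the two-step Khasminkii comparison does not deliver a rate finer than $a(\varepsilon)$.
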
 
The reader can find its proof in Subsubsection \ref{section: proof averagin principle}.
\subsubsection{Khasminkii's auxiliary processes}
\no We follow the technique introduced in \cite{Khasminskii} with the the required modifications to our setting in order to deal with the nonlocal components of the auxiliary processes $(\xX^\varepsilon)_{\varepsilon>0}$ and $(\yY^\varepsilon)_{\varepsilon>0}$ given respectively by (\ref{eq: Khasminki- controlled X variable}) and (\ref{eq: Khasminkii- controlled Y variable}). \\

\no Let $[-\tau,T]$ be divided into intervals of the same length parametrized for every $\varepsilon>0$ 
\begin{align} \label{eq: parametrization- the Delta}
\Delta= \Delta(\varepsilon):= \varepsilon^\gamma a^2(\varepsilon) |\ln \varepsilon|^p, \quad \text{for some } \gamma \in \Big (0, \theta - \frac{1}{2} \Big ) \quad \text{and } p>0.
\end{align}
where the scale $a(\varepsilon)$ is given by (\ref{eq:parametrization- the speed a}). \\

 \no We note the following convergences that follow directly from the choice of $\Delta=\Delta(\varepsilon)$ in (\ref{eq: parametrization- the Delta})  : %\marginpar{\cb{newphrase:point28R1}}
\begin{align} \label{eq: parametrizations- the conv Delta}
&\Delta(\varepsilon) \ra 0; \quad \frac{\Delta(\varepsilon)}{a^2(\varepsilon)} \ra 0; \text{ and }  \quad \frac{\Delta(\varepsilon)}{\varepsilon} \ra \infty \quad \text{ as } \varepsilon \ra 0.
\end{align}
For any $t \in [-\tau,T]$ we denote $t_\Delta := \left \lfloor{\frac{t}{\Delta}} \right \rfloor \Delta$.\\

\no We construct the auxiliary processes $(\hat \yY^\varepsilon(t))_{t \in [0,T]}$ and $(\hat \xX^\varepsilon(t))_{t \in [0,T]}$ by means of the following equations: for any $t \in [0,T]$ let 
\begin{align} \label{eq: Khasminkii -the fast variable with frozen component}
\hat \yY^\varepsilon(t)&= \yY^\varepsilon(t_\Delta) + \frac{1}{\varepsilon} \int_{t_\Delta}^t \Big ( f(\xX^\varepsilon_{t_\Delta}, \hat \yY^\varepsilon(s))+ g(\xX^\varepsilon_{t_\Delta}, \hat \yY^\varepsilon(s)) \xi^\varepsilon_2(s)ds + \int_\XX h( \xX^\varepsilon_{t_\Delta}, \hat \yY^\varepsilon(s),z) (\varphi^\varepsilon(s,z)-1) \nu(dz) \Big ) ds \nonumber \\
& + \frac{1}{\sqrt{\varepsilon}} \int_{t_\Delta}^t g(\xX^\varepsilon_{t_\Delta}, \hat \yY^\varepsilon(s)) d B^2(s) + \int_{t_\Delta}^t \int_\XX h(\xX^\varepsilon_{t_\Delta-}, \hat \yY^\varepsilon(s-),z) \tilde N^{\frac{1}{\varepsilon} \varphi^\varepsilon}(ds,dz)
\end{align}
and %\marginpar{\cb{point28R1}}
\begin{align} \label{eq: Khasminkki- the slow variable with frozen component}
\hat \xX^\varepsilon(t) & = \zeta(0) + \int_0^t  \Big (a(\xX^\varepsilon_{s_\Delta},  \yY^\varepsilon(s)) + \sigma(\xX^\varepsilon_s) \xi^\varepsilon_1(s) + \int_\XX c(\xX^\varepsilon_{s},z) (\varphi^\varepsilon(s,z)-1) \nu(dz) \Big ) ds \nonumber \\
& + \sqrt{\varepsilon} \int_0^t \sigma(\xX^\varepsilon_s) dB^1(s) + \varepsilon \int_0^t \int_\XX c(\xX^\varepsilon_{s-},z) \tilde N^{\frac{1}{\varepsilon} \varphi^\varepsilon}(ds,dz).
\end{align}
\subsubsection{Auxiliary estimates.}
\no For every $\varepsilon>0$ let us recall the $\bar \FF$-stopping time $\tilde \tau^\varepsilon_{\rR(\varepsilon)}$ given by (\ref{eq: first exit time tilde X}) for the fixed parametrization $\rR$ given in Proposition \ref{proposition: localization technique to the bounded case}.  The following lemmas are essential a-priori bounds that we use in the proof of the controlled averaging principle stated in Theorem \ref{theorem: controlled averaging principle}. 
\begin{lemma} \label{lemma: Khasminkii segment process estimate}
For every $\varepsilon>0$ let $\rR(\varepsilon)>0$, $b(\varepsilon):= \frac{\varepsilon}{a^2(\varepsilon)}$ and $\Delta(\varepsilon)>0$ fixed as above. Then, for any $(g_\varepsilon)_{\varepsilon>0}$ such that $g(\varepsilon)\simeq_\varepsilon a(\varepsilon)$ as $\varepsilon \ra 0$, the following asymptotic regime holds:
\begin{align} \label{eq: Khasmikkii segment process estimate}
\bar \PP \Big ( \displaystyle \sup_{0 \leq t \leq \tilde \tau^\varepsilon_{\rR(\varepsilon)}} ||\xX^\varepsilon_{t} - \xX^\varepsilon_{t_\Delta}||_\infty > g_\varepsilon \Big ) & \lesssim_\varepsilon  \Xi(\varepsilon) \ra 0, \quad \text{as } \varepsilon \ra 0,
\end{align}
where
\begin{align} \label{eq: Khasminkii-C}
 \Xi(\varepsilon) := b^2(\varepsilon) \frac{\varepsilon}{|\ln \varepsilon|^{2p}} + \frac{\varepsilon^{2 \theta-1 - 2 \gamma}}{|\ln \varepsilon|^{2p -q}} + \frac{\varepsilon^{ 2 \theta - 1 - \gamma}}{|\ln \varepsilon|^{p-q}} + \varepsilon^\gamma \varepsilon^{1- \theta} |\ln \varepsilon|^{2 p}, q > 2 \gamma +3, \quad \varepsilon >0.
\end{align}
\end{lemma}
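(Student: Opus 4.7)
The plan is to reduce the segment oscillation $\|\xX^\varepsilon_t-\xX^\varepsilon_{t_\Delta}\|_\infty$ to the pathwise oscillation of $\xX^\varepsilon$ over time windows of length $\Delta$, and then estimate that through the SDE decomposition (\ref{eq: Khasminki- controlled X variable}) stopped at $\tilde\tau^\varepsilon_{\rR(\varepsilon)}$. For any $t\in[0,T]$ and $\theta\in[-\tau,0]$, the quantity $\xX^\varepsilon(t+\theta)-\xX^\varepsilon(t_\Delta+\theta)$ is an increment over a time-shift of at most $\Delta$. When both endpoints are non-positive, Hypothesis \ref{condition: initial delay is Lipschitz} gives the pathwise bound $\lambda\Delta$, which is $o(a(\varepsilon))=o(g_\varepsilon)$ under the parametrization $\Delta=\varepsilon^{\gamma+1-\theta}|\ln\varepsilon|^p$. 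The remaining cases (one or both endpoints in $[0,T]$) reduce, via a triangle inequality, to controlling $\max_{0\le k<N_\varepsilon}\sup_{t\in[k\Delta,(k+1)\Delta]\wedge\tilde\tau^\varepsilon_{\rR(\varepsilon)}}|\xX^\varepsilon(t)-\xX^\varepsilon(k\Delta)|$, where $N_\varepsilon\asymp T/\Delta$.

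On each slab $[k\Delta,(k+1)\Delta]$, I decompose the increment from (\ref{eq: Khasminki- controlled X variable}) as $\xX^\varepsilon(t)-\xX^\varepsilon(k\Delta)=\sum_{j=1}^{5}I_j^k(t)$, where $I_1^k,I_2^k,I_3^k$ are the drift, diffusion-control and jump-control integrals, and $I_4^k,I_5^k$ are the Brownian and compensated-Poisson stochastic integrals. It suffices to show $\bar\PP\bigl(\max_k\sup_t|I_j^k(t)|>g_\varepsilon/5\bigr)\lesssim_\varepsilon \Xi(\varepsilon)$ for each $j$, and then combine by a union bound. For the drift $I_1^k$, Cauchy-Schwarz in time plus the a priori bound (\ref{eq: a priori bound controlled processes}) of Proposition \ref{proposition: a priori bound controlled processes} gives $\bar\EE[\sup_t|I_1^k|^2]\lesssim \Delta^2$, and for $I_2^k$ the control budget $\int_0^T|\xi^\varepsilon_1|^2\,ds\le Ma^2(\varepsilon)$ combined with Cauchy-Schwarz yields $\bar\EE[\sup_t|I_2^k|^2]\lesssim \Delta\,a^2(\varepsilon)\cdot(\text{local quadratic budget of }\xi^\varepsilon_1)$; Chebyshev together with the union bound produces contributions of the same order as the last summand of $\Xi(\varepsilon)$.

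For the martingales $I_4^k$ and $I_5^k$ I use Doob's maximal inequality together with the Bernstein-type estimate of Theorem 3.3 of \cite{DZ01} for continuous and for purely-discontinuous local martingales, using that on $\{\tilde\tau^\varepsilon_{\rR(\varepsilon)}>T\}$ the predictable quadratic variations are controlled by $\varepsilon\Delta\rR^2(\varepsilon)$ and $\varepsilon\int_{k\Delta}^{(k+1)\Delta}\int_\XX|z|^2\varphi^\varepsilon\nu(dz)ds$ respectively; the second one is handled using the exponential integrability (\ref{eq: integrability condition measure}) of $\nu$ and the entropy budget of $\varphi^\varepsilon$ through estimate (\ref{eq: lemma integrability controls- estimate 1}) of Lemma \ref{lemma: integrability controls}. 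Calibrating the Bernstein threshold to the scale $g_\varepsilon/|\ln\varepsilon|^{p}$ reproduces the first term $b^2(\varepsilon)\varepsilon/|\ln\varepsilon|^{2p}$ of $\Xi(\varepsilon)$. Finally, the localization error from $\{\tilde\tau^\varepsilon_{\rR(\varepsilon)}\le T\}$ is $o(\Xi(\varepsilon))$ by (\ref{eq: localization probability controlled process}) of Proposition \ref{proposition: localization technique to the bounded case}, and summing the five bounds yields $\lesssim_\varepsilon\Xi(\varepsilon)$.

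The main obstacle is the jump-control term $I_3^k$ and the compensated Poisson term $I_5^k$: because $\varphi^\varepsilon$ is only controlled via the entropy budget $L_T(\varphi^\varepsilon)\le Ma^2(\varepsilon)$ and not pointwise, a direct Cauchy-Schwarz fails to produce enough decay in $\Delta$. The remedy is to truncate $\varphi^\varepsilon$ at level $1+\beta_\varepsilon$ with $\beta_\varepsilon\asymp|\ln\varepsilon|^{q}$, $q>2\gamma+3$, and to split the resulting integrals into (i) a bulk piece on $\{|\psi^\varepsilon|\le\beta_\varepsilon/a(\varepsilon)\}$, on which the quadratic behavior $\ell(r)\asymp(r-1)^2/2$ near $r=1$ translates the entropy bound into an $L^2(\nu_T)$-bound of order $\sqrt{M\kappa_2(\beta_\varepsilon)}$, and (ii) a tail piece on $\{|\psi^\varepsilon|>\beta_\varepsilon/a(\varepsilon)\}$, on which the super-quadratic growth of $\ell$ at infinity combined with the exponential tail of $\nu$ yields a contribution of order $c(\beta_\varepsilon)a(\varepsilon)$, both supplied by Lemma \ref{lemma: integrability controls} of the Appendix. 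Balancing the Markov factor $1/g_\varepsilon^2\asymp 1/a^2(\varepsilon)$ against the logarithmic losses $|\ln\varepsilon|^{q}$ in each piece, and then against the union-bound factor $N_\varepsilon\asymp\varepsilon^{\theta-1-\gamma}|\ln\varepsilon|^{-p}$, is precisely what produces the second and third summands $\varepsilon^{2\theta-1-2\gamma}/|\ln\varepsilon|^{2p-q}$ and $\varepsilon^{2\theta-1-\gamma}/|\ln\varepsilon|^{p-q}$ of $\Xi(\varepsilon)$, and the condition $q>2\gamma+3$ is what keeps all four terms summable to $0$ as $\varepsilon\to 0$.
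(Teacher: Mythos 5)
Your overall plan shares the paper's three main ingredients --- a $\Delta$-discretization of the time axis, Hypothesis~\ref{condition: initial delay is Lipschitz} to dispose of increments that land in $[-\tau,0]$, and the Bernstein inequality of \cite{DZ01} for the stochastic integrals --- but your route through them is organized quite differently from the paper's. First, you reduce the segment oscillation to $\sup_{|u-v|\le\Delta}|\xX^\varepsilon(u)-\xX^\varepsilon(v)|$ and take a union bound over the single time index $k$, giving a factor $N_\varepsilon\asymp 1/\Delta$. The paper instead discretizes \emph{both} $t$ and $\theta$, splits into the cases $m\le k-1$, $m\ge k+1$ and $m=k$, and takes a double union bound with factor $N_\varepsilon K_\varepsilon\asymp 1/\Delta^2$. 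Your reduction is the tighter one; it would in fact give something $O(\Delta)$ smaller than $\Xi(\varepsilon)$, which is fine for the lemma, but it means the summands of $\Xi(\varepsilon)$ are \emph{not} literally reproduced by your bookkeeping --- each of them carries one extra factor of $1/\Delta$ coming from the paper's $K_\varepsilon N_\varepsilon$. Second, you split the slab increment into five pieces and estimate each one separately (drift and control terms by Cauchy--Schwarz plus budgets, martingales by Bernstein), whereas the paper writes out the whole increment and applies Bernstein in one stroke, with the drift, Girsanov and jump-control pieces absorbed implicitly. Your more explicit version is actually cleaner on this point: $\xX^\varepsilon(\cdot+\theta)-\xX^\varepsilon(t_\Delta+\theta)$ is a semimartingale, not a local martingale, and the paper does not say how it strips off the finite-variation part before invoking Theorem~3.3 of \cite{DZ01}; your decomposition makes that step legitimate.

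Where the two accounts genuinely part ways is the role of the exponent $q$. In the paper, $q$ enters through the Bernstein threshold $L_\varepsilon=a^2(\varepsilon)/|\ln\varepsilon|^q$, so that $e^{-g_\varepsilon^2/L_\varepsilon}\asymp e^{-|\ln\varepsilon|^q}$ dominates the super-polynomial tail and the factor $\varepsilon/L_\varepsilon=b(\varepsilon)|\ln\varepsilon|^q$ multiplies the quadratic-variation term; $q>2\gamma+3$ is exactly what makes $e^{-|\ln\varepsilon|^q}/\Delta^2$ dominated by the first summand $b^2(\varepsilon)\varepsilon/|\ln\varepsilon|^{2p}$. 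You instead spend $q$ on an $\varepsilon$-dependent truncation level $\beta_\varepsilon\asymp|\ln\varepsilon|^q$ for $\varphi^\varepsilon$ and claim this "precisely" reproduces the second and third summands of $\Xi(\varepsilon)$. This is a substantively different parametrization and it has not been checked: the constants $c_1(\beta)$ and $c_2(\beta)$ from Lemma~\ref{lemma: integrability controls} (equivalently, Remark~3.3 in \cite{BCD13}) satisfy $c_2(\beta)\asymp\beta/\ln\beta$ for large $\beta$, so taking $\beta_\varepsilon\to\infty$ degrades the $L^2$ control on the bulk piece by a factor $|\ln\varepsilon|^{q/2}$, and one has to verify that this loss is compensated against the $\sqrt{\Delta}$ gain and the union-bound factor so as to stay within $\Xi(\varepsilon)$. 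Plausible, but not shown, and there is no reason it should give the constraint $q>2\gamma+3$ rather than some other threshold.

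Two further imprecisions worth flagging. Your estimate $\bar\EE[\sup_t|I_1^k|^2]\lesssim\Delta^2$ for the drift piece silently drops the localization factor: on $\{T<\tilde\tau^\varepsilon_{\rR(\varepsilon)}\}$ one only has $|a(\xX^\varepsilon_s,\yY^\varepsilon(s))|\lesssim 1+\rR(\varepsilon)+|\yY^\varepsilon(s)|$, so the correct order is $\Delta^2\rR^2(\varepsilon)$, and after union bound and Chebyshev one gets $\Delta\rR^2(\varepsilon)/a^2(\varepsilon)$, whose convergence to zero depends on the interplay between the choice of $\rR(\varepsilon)$ and $\gamma$; for $\rR(\varepsilon)=a(\varepsilon)^{-1/4}$ this does not vanish for all $\theta\in(\tfrac12,1)$, so one either needs a slower-growing $\rR$ or must fold the drift into the Bernstein increment as the paper implicitly does. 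And your appeal to Doob's maximal inequality is redundant alongside the Bernstein estimate (which already carries the supremum over time); it does not give the required exponential decay on its own. None of these is a fatal conceptual gap --- the skeleton of the argument is sound and arguably cleaner than the paper's --- but to turn your sketch into a proof you would need to carry out the calibration explicitly rather than assert that it "reproduces" the stated $\Xi(\varepsilon)$, and to correct the drift bound.
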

The proof is given in the subsubsection \ref{subsubsection: segment process} of the Appendix.

\begin{lemma} \label{lemma: Khasminkki}
For every $\varepsilon>0$ let $\rR(\varepsilon)$ fixed as in Proposition \ref{proposition: localization technique to the bounded case} and $\Delta(\varepsilon)$ given by (\ref{eq: parametrization- the Delta}). Then the following convergence holds, 
\begin{align} \label{eq: lemma Khasmkinkii- fast variable convergence}
\displaystyle \sup_{0 \leq t \leq T} \bar \EE \Big  [|\yY^\varepsilon(t) - \hat \yY^\varepsilon(t)| \textbf{1}_{\{T< \tilde \tau^\varepsilon_{\rR(\varepsilon)} \}} \Big ] \lesssim_\varepsilon \frac{C(\varepsilon)}{\Delta(\varepsilon)} e^{- \frac{2 \Delta(\varepsilon)}{2 \varepsilon}+1} \text{as } \varepsilon \ra 0.
\end{align}
for some $C(\varepsilon) \ra 0$ as $\varepsilon \ra 0$ uniformly in the initial condition $(\chi, y) \in \cC \times \RR^k$.
\end{lemma}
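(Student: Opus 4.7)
The plan is to adapt Khasminkii's classical piecewise comparison argument to the controlled jump--diffusion with delay at hand. By construction of $\hat{\yY}^\varepsilon$ one has $\hat{\yY}^\varepsilon(k\Delta) = \yY^\varepsilon(k\Delta)$ at every grid point, so the error $e^\varepsilon(t) := \yY^\varepsilon(t) - \hat{\yY}^\varepsilon(t)$ vanishes at the left endpoint of every subinterval $[k\Delta,(k+1)\Delta]$. It is therefore enough to bound $\sup_k \sup_{t \in [k\Delta,(k+1)\Delta]} \bar{\EE}\bigl[|e^\varepsilon(t)|^2 \textbf{1}_{\{T < \tilde{\tau}^\varepsilon_{\rR(\varepsilon)}\}}\bigr]$ uniformly in $k$ and then apply Jensen's inequality $\bar{\EE}[|e^\varepsilon(t)|\textbf{1}] \leq (\bar{\EE}[|e^\varepsilon(t)|^2\textbf{1}])^{1/2}$ to recover the $L^1$-type bound claimed in \eqref{eq: lemma Khasmkinkii- fast variable convergence}.

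\textbf{It\^o formula and dissipativity.} The next step is to apply It\^o's formula to $|e^\varepsilon(t)|^2$ on each subinterval and take expectations, which kills the continuous and compensated jump martingales. Each coefficient difference is split as ``frozen-segment piece plus Lipschitz remainder'', e.g.\ $f(\xX^\varepsilon_t, \yY^\varepsilon(t)) - f(\xX^\varepsilon_{t_\Delta}, \hat{\yY}^\varepsilon(t)) = [f(\xX^\varepsilon_t, \yY^\varepsilon(t)) - f(\xX^\varepsilon_t, \hat{\yY}^\varepsilon(t))] + [f(\xX^\varepsilon_t, \hat{\yY}^\varepsilon(t)) - f(\xX^\varepsilon_{t_\Delta}, \hat{\yY}^\varepsilon(t))]$, and analogously for $g$ and $h$. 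Applying the dissipativity estimate \eqref{eq: dissipativity Lipschitz on the coefficients} to the first brackets simultaneously absorbs the drift, the Brownian quadratic variation and the $\nu$-part of the jump quadratic variation (after separating $\varphi^\varepsilon = 1 + (\varphi^\varepsilon-1)$ and pushing the $(\varphi^\varepsilon-1)$-piece into the remainder), while the Lipschitz estimates \eqref{eq: condition-Lipschitz} applied to the second brackets together with Cauchy--Schwarz give the differential inequality
\begin{align*}
\frac{d}{dt}\bar{\EE}\bigl[|e^\varepsilon(t)|^2 \textbf{1}_{\{T<\tilde{\tau}^\varepsilon_{\rR(\varepsilon)}\}}\bigr] \leq -\frac{\beta_1}{2\varepsilon}\bar{\EE}\bigl[|e^\varepsilon(t)|^2 \textbf{1}\bigr] + \frac{C}{\varepsilon}\bar{\EE}\bigl[\|\xX^\varepsilon_t - \xX^\varepsilon_{t_\Delta}\|_\infty^2 \textbf{1}\bigr] + R^\varepsilon(t),
\end{align*}
where $R^\varepsilon(t)$ collects the control-perturbation contributions involving $\xi^\varepsilon_2$ and $\varphi^\varepsilon-1$.

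\textbf{Gr\"onwall and forcing estimates.} Integrating the inequality from $k\Delta$ with $e^\varepsilon(k\Delta)=0$ yields
\begin{align*}
\bar{\EE}\bigl[|e^\varepsilon(t)|^2 \textbf{1}\bigr] \leq \frac{C}{\varepsilon}\int_{k\Delta}^t e^{-\frac{\beta_1}{2\varepsilon}(t-s)} \bar{\EE}\bigl[\|\xX^\varepsilon_s - \xX^\varepsilon_{s_\Delta}\|_\infty^2 \textbf{1}\bigr] ds + \int_{k\Delta}^t e^{-\frac{\beta_1}{2\varepsilon}(t-s)} R^\varepsilon(s) ds,
\end{align*}
and the crucial cancellation is that the $\varepsilon^{-1}$ prefactor in the forcing is exactly balanced by the integral of the exponential of rate $\beta_1/(2\varepsilon)$, producing a harmless $O(1)$ multiplier. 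For the segment forcing, an $L^2$-refinement of Lemma \ref{lemma: Khasminkii segment process estimate} --- the same computation kept at the second-moment level, using Proposition \ref{proposition: a priori bound controlled processes} together with the Lipschitzness of the initial delay segment of Hypothesis \ref{condition: initial delay is Lipschitz} to handle the portion of the segment crossing zero --- shows that $\sup_{s \leq T} \bar{\EE}[\|\xX^\varepsilon_s - \xX^\varepsilon_{s_\Delta}\|_\infty^2 \textbf{1}] \to 0$ at the rate dictated by $\Delta(\varepsilon)$ and $\Xi(\varepsilon)$. The remainder $R^\varepsilon$ is controlled after Cauchy--Schwarz by $\int_0^T|\xi^\varepsilon_2(s)|^2 ds \leq M a^2(\varepsilon)$, the sublinearity of $g$ and $h$, and the jump-integrability bounds supplied by Lemma \ref{lemma: integrability controls}; convolution with the exponential kernel leaves contributions of order $a^2(\varepsilon)$ up to logarithmic corrections, all vanishing as $\varepsilon \to 0$.

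\textbf{Main obstacle.} The most delicate single ingredient is the jump-control piece $\frac{1}{\varepsilon}\int_\XX(h(\xX^\varepsilon_s,\yY^\varepsilon(s),z) - h(\xX^\varepsilon_{s_\Delta},\hat{\yY}^\varepsilon(s),z))(\varphi^\varepsilon(s,z)-1)\nu(dz)$, because $\varphi^\varepsilon - 1$ is controlled only through the nonlinear energy $L_T(\varphi^\varepsilon) \leq M a^2(\varepsilon)$ and not in any Lebesgue norm. I plan to handle it by the standard split $\varphi^\varepsilon - 1 = a(\varepsilon)\psi^\varepsilon \textbf{1}_{\{|\psi^\varepsilon|\leq \beta/a(\varepsilon)\}} + (\varphi^\varepsilon-1)\textbf{1}_{\{|\psi^\varepsilon|>\beta/a(\varepsilon)\}}$: on the first event Remark \ref{remark: on automatic tightness of a weak compact collection in L2} places $\psi^\varepsilon \textbf{1}_{\{\cdot\}}$ in a ball of $L^2(\nu_T)$, and together with the sublinear bound $|h(\zeta,y,z)|\leq \Lambda |z|$ of Hypothesis \ref{condition: dissipativity} and the exponential integrability \eqref{eq: integrability condition measure}, Cauchy--Schwarz produces a bound of order $a(\varepsilon)$; on the complementary event the superlinearity of $\ell$ in Lemma \ref{lemma: integrability controls} forces the $\nu_T$-mass of $\{|\psi^\varepsilon|>\beta/a(\varepsilon)\}$ to vanish, so that the coarser bound $|\varphi^\varepsilon - 1| \leq \varphi^\varepsilon + 1$ together with \eqref{eq: integrability condition measure} yields a negligible contribution. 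Combining all pieces and applying Jensen concludes.
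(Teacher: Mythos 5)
Your plan shares the paper's skeleton: work on each length--$\Delta(\varepsilon)$ subinterval, apply It\^o to $|e^\varepsilon(t)|^2 := |\yY^\varepsilon(t)-\hat\yY^\varepsilon(t)|^2$ with $e^\varepsilon(t_\Delta)=0$, invoke the dissipativity \eqref{eq: dissipativity- the last one required } to generate the $-\beta_1/\varepsilon$ rate, and close with Gronwall. The substantive difference is in how each argument closes the Gronwall step and in what is demanded of the segment deviation $\|\xX^\varepsilon_s - \xX^\varepsilon_{s_\Delta}\|_\infty$. The paper bounds the segment forcing $\frac{\beta_2}{\varepsilon}\|\xX^\varepsilon_s - \xX^\varepsilon_{s_\Delta}\|_\infty^2$ crudely by $O(\rR^2(\varepsilon)/\varepsilon)$ on the localisation event, folds it into the constant $C_2(\varepsilon)$, and then applies an integral-form Gronwall with the negative kernel $\frac{1}{\varepsilon}(-1 + a(\varepsilon) + \lambda\Theta^\varepsilon)$ to produce the exponential factor multiplying $C_2(\varepsilon)$. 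Do not adopt that version of the step: the integral Gronwall with constant inhomogeneity requires a nonnegative kernel, and with a negative kernel the inequality $u(t) \leq C + \int_0^t K u$ only yields $u(t)\leq C$ (taking $u\equiv C/2$ on $[0,1/|K|]$ already violates $u(t)\leq Ce^{\int_0^t K}$), and in any case the exponent is near zero for $t$ near $t_\Delta$, so no uniform-in-$t$ exponential smallness could follow. The correct consequence of the dissipativity is exactly your convolution bound $u(t)\leq\int_{t_\Delta}^t e^{-\frac{\beta_1}{2\varepsilon}(t-s)}b(s)\,ds \lesssim \varepsilon\sup_s b(s)$, which with a forcing rate $b\sim\rR^2(\varepsilon)/\varepsilon$ saturates at $\rR^2(\varepsilon)\not\to0$. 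So your replacement of the crude $\rR^2$ bound by the genuine smallness $\sup_s\bar\EE[\|\xX^\varepsilon_s - \xX^\varepsilon_{s_\Delta}\|_\infty^2\textbf{1}]\to 0$ is not cosmetic; it is what makes the convolution estimate close, and it is the right route.

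Where your write-up is too quick is in asserting that the required $L^2$ bound is ``the same computation as Lemma \ref{lemma: Khasminkii segment process estimate} kept at the second-moment level''. That lemma's proof rests on Bernstein's inequality for the martingale parts together with a double union bound over a $t$-grid and a $\theta$-grid; it is a tail-probability argument and does not return a second-moment estimate. To get the pointwise-in-$s$ bound you need a different argument: partition the delay window $[s_\Delta-\tau,s]$ into $O(\tau/\Delta)$ blocks, apply Doob/Burkholder--Davis--Gundy to the running oscillation of the Brownian and jump martingale parts over each block, bound the expected quadratic variation per block by $\lesssim\varepsilon\rR^2(\varepsilon)(\Delta(\varepsilon)+a^2(\varepsilon))$ using the localisation and Lemma \ref{lemma: integrability controls}, and sum the $O(\tau/\Delta)$ blocks to obtain a bound of order $\varepsilon\rR^2(\varepsilon)\bigl(1+a^2(\varepsilon)/\Delta(\varepsilon)\bigr)$, which vanishes under \eqref{eq: parametrization- the Delta} and $\rR(\varepsilon)=a(\varepsilon)^{-1/4}$; the drift contribution and the portion of the segment falling below time zero (Hypothesis \ref{condition: initial delay is Lipschitz}) are easier. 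This step is the crux of your route and should be written out rather than asserted. The remaining pieces of your proposal --- Jensen to pass from $L^2$ to $L^1$, Young's inequality with a $\delta$ chosen so the $\xi^\varepsilon_2$-perturbation yields a coefficient on $|e^\varepsilon|^2$ absorbed by a fraction of $\beta_1/\varepsilon$ and a pure $O(a^2(\varepsilon)/\varepsilon)$ forcing, and the $\beta$-threshold split of $\varphi^\varepsilon-1$ via Remark \ref{remark: on automatic tightness of a weak compact collection in L2} and Lemma \ref{lemma: integrability controls} --- are sound and parallel the paper's treatment via $\Theta^\varepsilon$.
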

The proof is given in subsubsection \ref{subsubsection: Khasminkii} of the appendix.

\subsubsection{Khasminkii's technique}
\begin{proposition} \label{prop: Khasminkii averaging controlled-part1}
For any $\delta>0$ we have
\begin{align} \label{eq: Khasminkii limit1}
\displaystyle \limsup_{\varepsilon \ra 0} \bar \PP \Big ( \displaystyle \sup_{0 \leq t \leq \tilde \tau^\varepsilon_{\rR(\varepsilon)}} |\xX^\varepsilon(t) - \hat \xX^\varepsilon(t)| > \frac{\delta a(\varepsilon)}{2}\Big )=0.
\end{align}
\end{proposition}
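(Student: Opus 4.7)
The plan is a direct pathwise comparison between $\xX^\varepsilon$ and $\hat\xX^\varepsilon$. First, I would subtract the defining equations (\ref{eq: Khasminki- controlled X variable}) and (\ref{eq: Khasminkki- the slow variable with frozen component}) and observe that all the noise and controlled terms cancel. Indeed, the diffusion term $\sqrt{\varepsilon}\int_0^t \sigma(\xX^\varepsilon_s)dB^1(s)$, the jump term $\varepsilon \int_0^t \int_\XX c(\xX^\varepsilon_{s-},z)\tilde N^{\frac{1}{\varepsilon}\varphi^\varepsilon}(ds,dz)$, the controlled drift $\int_0^t \sigma(\xX^\varepsilon_s)\xi^\varepsilon_1(s)\, ds$ and the nonlocal integral $\int_0^t\int_\XX c(\xX^\varepsilon_s,z)(\varphi^\varepsilon(s,z)-1)\nu(dz)\, ds$ involve the same integrands in both processes. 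The only surviving contribution is the discrepancy in the mean drift of $a$, so that for every $t \in [0, \tilde\tau^\varepsilon_{\rR(\varepsilon)}]$
$$\xX^\varepsilon(t) - \hat\xX^\varepsilon(t) = \int_0^t \bigl[a(\xX^\varepsilon_s, \yY^\varepsilon(s)) - a(\xX^\varepsilon_{s_\Delta}, \yY^\varepsilon(s))\bigr]\, ds.$$

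Then I would apply the Lipschitz hypothesis (\ref{eq: condition-Lipschitz}) on $a$ in its segment argument, noting that the fast variable $\yY^\varepsilon(s)$ is common to both summands, to obtain the pathwise estimate
$$\sup_{0 \leq t \leq \tilde\tau^\varepsilon_{\rR(\varepsilon)}} |\xX^\varepsilon(t) - \hat\xX^\varepsilon(t)| \leq L T \sup_{0 \leq s \leq \tilde\tau^\varepsilon_{\rR(\varepsilon)}} \|\xX^\varepsilon_s - \xX^\varepsilon_{s_\Delta}\|_\infty.$$
Choosing $g_\varepsilon := \frac{\delta a(\varepsilon)}{2LT}$, which satisfies $g_\varepsilon \simeq_\varepsilon a(\varepsilon)$, Lemma \ref{lemma: Khasminkii segment process estimate} immediately yields
$$\bar\PP\Bigl(\sup_{0 \leq t \leq \tilde\tau^\varepsilon_{\rR(\varepsilon)}} |\xX^\varepsilon(t) - \hat\xX^\varepsilon(t)| > \tfrac{\delta a(\varepsilon)}{2}\Bigr) \leq \bar\PP\Bigl(\sup_{0 \leq s \leq \tilde\tau^\varepsilon_{\rR(\varepsilon)}} \|\xX^\varepsilon_s - \xX^\varepsilon_{s_\Delta}\|_\infty > g_\varepsilon\Bigr) \lesssim_\varepsilon \Xi(\varepsilon),$$
which tends to zero as $\varepsilon \to 0$ by the definition of $\Xi(\varepsilon)$ in (\ref{eq: Khasminkii-C}) together with the parametric choices $\theta \in (1/2, 1)$, $\gamma \in (0, \theta - 1/2)$ and $p>0$.

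The main point of the argument is precisely the cancellation of every stochastic and controlled-drift contribution in the pair $(\xX^\varepsilon, \hat\xX^\varepsilon)$: by construction of $\hat\xX^\varepsilon$ in (\ref{eq: Khasminkki- the slow variable with frozen component}), only the $a$-drift is modified by freezing the slow segment at the left endpoint of each discretization interval, so the proposition reduces to the segment oscillation bound already supplied by Lemma \ref{lemma: Khasminkii segment process estimate}. Consequently there is no genuine obstacle at this step of the strategy — the heavy lifting has been absorbed into the proof of that lemma (which is where the localization by $\tilde\tau^\varepsilon_{\rR(\varepsilon)}$, the a-priori moment estimates (\ref{eq: a priori bound controlled processes}), Bernstein's maximal inequality, and the Lipschitz regularity of the initial segment $\chi$ in Hypothesis \ref{condition: initial delay is Lipschitz} intervene).
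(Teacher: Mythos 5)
Your cancellation of the noise, controlled-drift and nonlocal terms between $\xX^\varepsilon$ and $\hat\xX^\varepsilon$ is correct, and so is the idea of reducing to the segment oscillation via the Lipschitz property of $a$. But you have \emph{also} cancelled the fast-variable argument of $a$, and that does not match the construction the paper actually uses for $\hat\xX^\varepsilon$. Although (\ref{eq: Khasminkki- the slow variable with frozen component}) displays $\yY^\varepsilon(s)$ in the $a$-drift, the paper's proof of this very proposition writes
\[
\hat\xX^\varepsilon(t) - \xX^\varepsilon(t) = \int_0^t\bigl(a(\xX^\varepsilon_{s_\Delta}, \hat\yY^\varepsilon(s)) - a(\xX^\varepsilon_s, \yY^\varepsilon(s))\bigr)\,ds,
\]
i.e.\ the second argument in the drift of $\hat\xX^\varepsilon$ is the auxiliary fast process $\hat\yY^\varepsilon$ with \emph{frozen} slow component, not $\yY^\varepsilon$; and the Khasminskii averaging of Proposition \ref{prop: Khasminkii averaging controlled-part2} — in particular the decomposition (\ref{eq: Khasminkii-final1}) and the ergodic step (\ref{eq: Khasminkii-final8}), which applies Proposition \ref{proposition: the averaging property for the averaged coefficient} to a fast process with frozen slow component — hinges on this choice. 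Accordingly, the Lipschitz estimate produces
\[
|\hat\xX^\varepsilon(t) - \xX^\varepsilon(t)| \leq L\int_0^t \|\xX^\varepsilon_{s_\Delta} - \xX^\varepsilon_s\|_\infty\,ds + L\int_0^t |\hat\yY^\varepsilon(s) - \yY^\varepsilon(s)|\,ds,
\]
with a second integral that your argument omits.

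The first term is handled by Lemma \ref{lemma: Khasminkii segment process estimate} exactly as you say. The second needs Lemma \ref{lemma: Khasminkki}: the paper bounds
$\bar\PP\bigl(\int_0^T |\hat\yY^\varepsilon(s)-\yY^\varepsilon(s)|^2\,\textbf{1}_{\{T<\tilde\tau^\varepsilon_{\rR(\varepsilon)}\}}\,ds > C a^2(\varepsilon)\bigr)$
by Markov's inequality and the estimate (\ref{eq: Khasminkii C2 conclusion}), giving the decay $C_2(\varepsilon)\Delta(\varepsilon)^{-1}a(\varepsilon)^{-2}e^{-\Delta(\varepsilon)/(2\varepsilon)+1}\to 0$. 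You never invoke Lemma \ref{lemma: Khasminkki}, so this contribution is absent, not deferred. (If one instead took (\ref{eq: Khasminkki- the slow variable with frozen component}) at face value, the price of Lemma \ref{lemma: Khasminkki} would simply be paid in Proposition \ref{prop: Khasminkii averaging controlled-part2}, where one would have to replace $\yY^\varepsilon$ by $\hat\yY^\varepsilon$ before applying the ergodic bound; but your proposal does not do that either, so as written the proof is incomplete.)
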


\begin{proof}
\no The definitions of $(\yY^\varepsilon(t))_{t \in [0,T]}$ and  $(\hat \yY^\varepsilon(t))_{t \in [0,T]}$ given in (\ref{eq: Khasminkii- controlled Y variable}) and (\ref{eq: Khasminkii -the fast variable with frozen component}) respectively combined with Hypothesis \ref{condition: assumptions for existence uniqueness solution} yield for every $\varepsilon>0$ and $t \in [0,T]$ that
\begin{align*}
\hat \xX^\varepsilon(t) - \xX^\varepsilon(t) =\int_0^t \Big ( a(\xX^\varepsilon_{s_\Delta}, \hat \yY^\varepsilon(s)) - a(\xX^\varepsilon_s, \yY^\varepsilon(s))  \Big )ds \\
 \leq L \int_0^t ||\xX^\varepsilon_{s_\Delta}- \xX^\varepsilon_s||_\infty ds + L \int_0^t |\hat \yY^\varepsilon(s)- \yY^\varepsilon(s)| ds .
\end{align*}
\no The asymptotic behaviour (\ref{eq: parametrizations- the conv Delta}) of $\Delta(\varepsilon)>0$ fixed in (\ref{eq: parametrization- the Delta}) combined with Lemma \ref{lemma: Khasminkii segment process estimate}, (\ref{eq: Khamsminskii C2}) and (\ref{eq: Khasminkii C2 conclusion}) of Lemma \ref{lemma: Khasminkki} yield some $C=C(L,T)>0$ such that 
\begin{align*}
\bar \PP \Big ( \displaystyle \sup_{0 \leq t \leq \tilde \tau^\varepsilon_{\rR(\varepsilon)}} |\hat \xX^\varepsilon(t) - \xX^\varepsilon(t)| > \frac{a(\varepsilon)}{2} \Big ) & \leq \bar \PP \Big ( \int_0^{T \wedge \tilde \tau^\varepsilon_{\rR(\varepsilon)}} |a(\xX^\varepsilon_{s_\Delta}, \hat \yY^\varepsilon(s)) - a(\xX^\varepsilon_s, \yY^\varepsilon(s))| ds > \frac{a(\varepsilon)}{2} \Big ) \\
& \leq  \bar \PP \Big ( \displaystyle \sup_{0 \leq t \leq T \wedge \tilde \tau^\varepsilon_{\rR(\varepsilon)}} || \xX^\varepsilon_{t_\Delta} - \xX^\varepsilon_t||_\infty > C a(\varepsilon)\Big ) \\
&+ \bar \PP \Big ( \int_0^T |\hat \yY^\varepsilon(s) - \yY^\varepsilon(s)|^2  \textbf{1}_{\{ T< \tilde \tau^\varepsilon(\rR(\varepsilon))\}} ds > Ca^2(\varepsilon) \Big ) \\
& \lesssim_\varepsilon \Xi(\varepsilon) + \frac{1}{a^2(\varepsilon)} \int_0^T \bar \EE \Big [ |\hat \yY^\varepsilon(s)- \yY^\varepsilon(s)|^2 \textbf{1}_{\{ T< \tilde \tau^\varepsilon_{\rR(\varepsilon)}\}} ds\Big ] \\
&\lesssim_\varepsilon \Xi(\varepsilon) + \frac{C_2(\varepsilon)}{\Delta(\varepsilon) a^2(\varepsilon)} e^{- \frac{\Delta(\varepsilon)}{2\varepsilon} + 1} \ra 0 \text{ as } \varepsilon \ra 0.
\end{align*}

This finishes the proof of (\ref{eq: Khasminkii limit1}).
\end{proof}
\begin{proposition} \label{prop: Khasminkii averaging controlled-part2}
For any $\delta>0$ we have
\begin{align} \label{eq: Khasminkii limit 2}
\displaystyle \limsup_{\varepsilon \ra 0} \bar \PP \Big ( \displaystyle \sup_{0 \leq t \leq \tilde \tau^\varepsilon_{\rR(\varepsilon)}} |\hat \xX^\varepsilon(t) - \bar \xX^\varepsilon(t)|> \frac{\delta a(\varepsilon)}{2}\Big )=0.
\end{align}
\end{proposition}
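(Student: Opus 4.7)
The natural starting point is the pathwise decomposition on $[0, \tilde \tau^\varepsilon_{\rR(\varepsilon)}]$,
\begin{equation*}
\hat \xX^\varepsilon(t) - \bar \xX^\varepsilon(t) = I_1^\varepsilon(t) + I_2^\varepsilon(t) + I_3^\varepsilon(t) + I_4^\varepsilon(t) + I_5^\varepsilon(t) + I_6^\varepsilon(t),
\end{equation*}
where $I_1^\varepsilon(t) := \int_0^t [a(\xX^\varepsilon_{s_\Delta}, \yY^\varepsilon(s)) - \bar a(\bar \xX^\varepsilon_s)] ds$ is the averaging part and $I_2^\varepsilon, \dots, I_6^\varepsilon$ collect respectively the $\sigma$-drift difference $(\sigma(\xX^\varepsilon_s) - \sigma(\bar \xX^\varepsilon_s))\xi^\varepsilon_1(s)$, the $c$-drift difference against $(\varphi^\varepsilon-1)\nu$, the $\sigma$-martingale of size $\sqrt{\varepsilon}$, and the $c$-martingale of size $\varepsilon$ against $\tilde N^{\frac{1}{\varepsilon}\varphi^\varepsilon}$. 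I would further split $I_1^\varepsilon$ into three pieces $J_1^\varepsilon(t) := \int_0^t [a(\xX^\varepsilon_{s_\Delta}, \hat \yY^\varepsilon(s)) - \bar a(\xX^\varepsilon_{s_\Delta})] ds$, $J_2^\varepsilon(t) := \int_0^t [a(\xX^\varepsilon_{s_\Delta}, \yY^\varepsilon(s)) - a(\xX^\varepsilon_{s_\Delta}, \hat \yY^\varepsilon(s))] ds$ and $J_3^\varepsilon(t) := \int_0^t [\bar a(\xX^\varepsilon_{s_\Delta}) - \bar a(\bar \xX^\varepsilon_s)] ds$, the last of which will feed the Gronwall iteration.

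The main obstacle is the ergodic piece $J_1^\varepsilon$. On each window $[k\Delta,(k+1)\Delta]$ the segment $\xX^\varepsilon_{k\Delta}$ is frozen, so that after the time rescaling $u = (s-k\Delta)/\varepsilon$ the fast process $\hat \yY^\varepsilon$ solves an SDE of the type (\ref{eq: the SDE for the fast variable with frozen slow component}) driven by the frozen slow segment, plus controlled perturbations involving $\xi^\varepsilon_2$ and $\varphi^\varepsilon-1$. Since $(\xi^\varepsilon_2, \varphi^\varepsilon) \in \tilde S^M_\varepsilon \times S^M_{+,\varepsilon}$ the control bounds decay like $a(\varepsilon)$, and a Gronwall-type comparison on each window shows that these perturbations contribute only an $o(a(\varepsilon))$ error per window. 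Invoking Proposition \ref{proposition: the averaging property for the averaged coefficient} with $T = \Delta/\varepsilon \to \infty$ together with the moment bound of Proposition \ref{proposition: a priori bound controlled processes} I expect the estimate
\begin{equation*}
\bar \EE \Big| \int_{k\Delta}^{(k+1)\Delta} [a(\xX^\varepsilon_{k\Delta}, \hat \yY^\varepsilon(s)) - \bar a(\xX^\varepsilon_{k\Delta})] ds \Big|^2 \lesssim \Delta^2 \alpha(\Delta/\varepsilon) (1 + \Gamma_1(M)),
\end{equation*}
uniformly in $k \leq T/\Delta$, on the localized event. Summing the $\lfloor T/\Delta \rfloor$ contributions, applying Cauchy--Schwarz in the number of windows and then Markov's inequality yields $\bar \PP(\sup_t |J_1^\varepsilon(t)| > \delta a(\varepsilon)/16) \lesssim T^2 \alpha(\Delta/\varepsilon)/a^2(\varepsilon)$, which vanishes because the parametrizations (\ref{eq:parametrization- the speed a}) and (\ref{eq: parametrization- the Delta}) are set up so that $\alpha(\Delta/\varepsilon) = o(a^2(\varepsilon))$. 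The auxiliary piece $J_2^\varepsilon$ is handled by Lipschitz continuity of $a$ in the fast variable together with Lemma \ref{lemma: Khasminkki}, and $J_3^\varepsilon$ by Lipschitz continuity of $\bar a$ (Proposition \ref{proposition: bar a is Lipschitz continuous}) combined with Lemma \ref{lemma: Khasminkii segment process estimate} and Proposition \ref{prop: Khasminkii averaging controlled-part1}.

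For the drift terms $I_2^\varepsilon, I_3^\varepsilon, I_4^\varepsilon$ I would exploit the Lipschitz regularity of $\sigma$ and $c$ in Hypothesis \ref{condition: assumptions for existence uniqueness solution}, using the telescoping $\|\xX^\varepsilon_s - \bar \xX^\varepsilon_s\|_\infty \leq \|\xX^\varepsilon_s - \hat \xX^\varepsilon_s\|_\infty + \|\hat \xX^\varepsilon_s - \bar \xX^\varepsilon_s\|_\infty$: the first summand disappears with high probability by Proposition \ref{prop: Khasminkii averaging controlled-part1}, while the second will generate the Gronwall kernel. The integrated control densities $\int_0^T |\xi^\varepsilon_1|$ and $\int_0^T\!\!\int_\XX |z||\varphi^\varepsilon - 1|\nu$ are $O(a(\varepsilon))$ via Cauchy--Schwarz on $\tilde S^M_\varepsilon$ and via (\ref{eq: lemma integrability controls- estimate 1}) under Hypothesis \ref{condition: the measure}. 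The martingale terms $I_5^\varepsilon, I_6^\varepsilon$ are handled by Burkholder--Davis--Gundy inequalities: the prefactors $\sqrt{\varepsilon}$ and $\varepsilon$ give a bound of order $\sqrt{\varepsilon}\,\bar \EE[\sup_{r \leq t\wedge \tilde \tau^\varepsilon_{\rR(\varepsilon)}}|\xX^\varepsilon - \bar \xX^\varepsilon|^2]^{1/2}$, which is $o(a(\varepsilon))$ in the moderate regime since $a(\varepsilon) = \varepsilon^{(1-\theta)/2} \gg \sqrt{\varepsilon}$.

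Assembling the six bounds, writing $D^\varepsilon(t) := \sup_{r \leq t \wedge \tilde \tau^\varepsilon_{\rR(\varepsilon)}} |\hat \xX^\varepsilon(r) - \bar \xX^\varepsilon(r)|$, one is led to a stochastic inequality $D^\varepsilon(t) \leq C \int_0^t D^\varepsilon(s) ds + R^\varepsilon(t)$ with $\bar \PP(\sup_t R^\varepsilon(t) > \delta a(\varepsilon)/4) \to 0$, after which a pathwise application of Gronwall's lemma on the good event delivers (\ref{eq: Khasminkii limit 2}). The technical crux, and the only step that cannot be reduced to previously established estimates, is the fine matching of the mixing rate $\alpha(\Delta/\varepsilon)$ against the moderate scale $a^2(\varepsilon)$ in the bound for $J_1^\varepsilon$, which is precisely what forces the restrictions $\theta \in (\tfrac{1}{2},1)$ and $\gamma \in (0, \theta-\tfrac{1}{2})$ in (\ref{eq:parametrization- the speed a}) and (\ref{eq: parametrization- the Delta}).
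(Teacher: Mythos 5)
Your plan follows the same skeleton as the paper's proof: decompose $\hat \xX^\varepsilon - \bar \xX^\varepsilon$ into a Khasminkii ergodic error, drift-difference pieces feeding the Gronwall loop, and martingale pieces handled by BDG, then close via Gronwall using the Lipschitz estimate for $\bar a$ from Proposition \ref{proposition: bar a is Lipschitz continuous}; split the ergodic piece over the $\Delta$-grid (your $J_1^\varepsilon$, $J_2^\varepsilon$, $J_3^\varepsilon$ correspond, up to bookkeeping, to the paper's $I^\varepsilon_1$, $I^\varepsilon_2$, $I^\varepsilon_3$ in (\ref{eq: Khasminkii-final4}) together with the $\bar a$ comparison and the Gronwall kernel); and estimate the leading window term via Markov plus the mixing bound of Proposition \ref{proposition: the averaging property for the averaged coefficient}. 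The one structural discrepancy is that you keep $a(\xX^\varepsilon_{s_\Delta},\yY^\varepsilon(s))$ in $\hat\xX^\varepsilon$, i.e.\ the literal (\ref{eq: Khasminkki- the slow variable with frozen component}), and therefore must compare $\yY^\varepsilon$ against $\hat\yY^\varepsilon$ inside this proposition via Lemma \ref{lemma: Khasminkki}; the paper instead discharges that comparison in Proposition \ref{prop: Khasminkii averaging controlled-part1} (implicitly using $\hat\yY^\varepsilon$ in the definition of $\hat\xX^\varepsilon$), so Lemma \ref{lemma: Khasminkki} appears there rather than here. This is a cosmetic reshuffling, not a different method.

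The step you cannot close as stated is the handling of $J_1^\varepsilon$, precisely the passage from the controlled frozen fast process to the $Y^{\zeta,y}$ of (\ref{eq: the SDE for the fast variable with frozen slow component}). You are right to note that $\hat\yY^\varepsilon$ is not $Y^{\zeta,y}$: it carries the control drifts $\tfrac{1}{\varepsilon}g\,\xi^\varepsilon_2$ and $\tfrac{1}{\varepsilon}\int_\XX h\,(\varphi^\varepsilon-1)\nu$. But the assertion that a per-window Gronwall comparison yields an $o(a(\varepsilon))$ error is not supported by the natural estimate. After the time change $u=(s-k\Delta)/\varepsilon$ these control drifts become $O(1)$-intensity perturbations over the long rescaled window $[0,\Delta/\varepsilon]$, and the dissipativity (\ref{eq: dissipativity Lipschitz on the coefficients}) makes the controlled process equilibrate to a quasi-stationary state offset from that of $Y^{\zeta,y}$ by an amount of order $|\xi^\varepsilon_2(s)|$; integrating the induced shift in $a(\xX^\varepsilon_{s_\Delta},\cdot)$ over $[0,T]$ produces a term of size $\int_0^T|\xi^\varepsilon_2|\,ds\lesssim\sqrt{T M}\,a(\varepsilon)$, which is \emph{the same} order as the target threshold $\delta a(\varepsilon)$, not smaller. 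So the perturbation neither disappears per window nor washes out in the sum, and one would need a genuine cancellation mechanism (or a modified averaged controlled equation retaining a $\xi_2$-dependent drift) to salvage the bound. I note that the paper's own proof applies Proposition \ref{proposition: the averaging property for the averaged coefficient} directly to the controlled process $Z=\yY^\varepsilon(\xX^\varepsilon_{k\Delta},\yY^\varepsilon(k\Delta))$ in (\ref{eq: Khasminkii-final8}) without addressing this distinction either, so the gap is shared — but your ``Gronwall-type comparison'' is carrying more weight than the estimate can bear, and merely invoking smallness of $(\xi^\varepsilon_2,\varphi^\varepsilon)$ in $\tilde S^M_\varepsilon\times S^M_{+,\varepsilon}$ does not settle it.
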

\begin{proof}
 \no For every $\varepsilon>0$, $t \in [0,T]$, $\zeta \in \dD$, $\xi \in \tilde \uU^M_{+,\varepsilon}$ and $\varphi^\varepsilon \in \uU^M_{+,\varepsilon}$, we define the function
\begin{align*}
b^\varepsilon(\zeta)(t):= \int_0^t \Big ( \sigma(\zeta) \xi_1^\varepsilon(s) + \int_\XX c(\zeta,z) (\varphi^\varepsilon(s,z)-1) \nu(dz) \Big ).
\end{align*}
\no The definitions of $(\xX^\varepsilon(t))_{t \in [0,T]}$ and $(\hat \xX^\varepsilon(t))_{t \in [0,T]}$ given in  (\ref{eq: Khasminki- controlled X variable}) and respectively in  (\ref{eq: Khasminkki- the slow variable with frozen component}) combined with the definition of $b^\varepsilon$ given above imply for every $t \in [0,T]$ and $\varepsilon>0$   the following identity $\bar \PP$-a.s. on the event $\{ T < \tilde \tau^\varepsilon_{\rR(\varepsilon)} \}$:
\begin{align} \label{eq: Khasminkii-final1}
\hat \xX^\varepsilon(t) - \bar \xX^\varepsilon(t)&= \int_0^t \Big ( b^\varepsilon(\hat \xX^\varepsilon_s) - b^\varepsilon(\bar \xX^\varepsilon_s)\Big ) ds \nonumber \\
& + \int_0^t \Big ( a(\xX^\varepsilon_{s_\Delta}, \hat \yY^\varepsilon(s)) - \bar a(\xX^\varepsilon_s) \Big ) ds \nonumber \\
& + \int_0^t \Big ( \bar a(\xX^\varepsilon_s) - \bar a(\hat \xX^\varepsilon_s) \Big ) ds + \int_0^t \Big ( \bar a(\hat \xX^\varepsilon_s) - \bar a(\bar \xX^\varepsilon_s) \Big ) ds \nonumber \\
& + \sqrt{\varepsilon} \int_0^t \Big (\sigma(\xX^\varepsilon_s) - \sigma(\bar \xX^\varepsilon_s) \Big ) dB^1(s)
\nonumber \\
& + \varepsilon \int_0^t \int_\XX \Big (c(\xX^\varepsilon_{s-},z) - c(\bar \xX^\varepsilon_{s-},z) \Big ) \tilde N^{\frac{1}{\varepsilon} \varphi^\varepsilon}(ds,dz).
\end{align}
\no Hypothesis \ref{condition: assumptions for existence uniqueness solution}, Proposition \ref{proposition: bar a is Lipschitz continuous} and (\ref{eq: Khasminkii-final1}) yield some constant $C=C(L,T)>0$ such that on the event $\{ T < \tilde \tau^\varepsilon_{\rR(\varepsilon)}\}$ we have $\bar \PP$-a.s.
\begin{align*}
\displaystyle \sup_{0 \leq s \leq t} |\hat \xX^\varepsilon(s) - \bar \xX^\varepsilon(s)|^2 & \leq C \Big ( \int_0^t \displaystyle \sup_{0 \leq u \leq s} |\hat \xX^\varepsilon(u) - \bar \xX^\varepsilon(u)|^2 ds+ \displaystyle \sup_{0 \leq s \leq t} \Big | \int_0^s \Big ( a(\xX^\varepsilon_{u_\Delta}, \yY^\varepsilon_u) - \bar a(\xX^\varepsilon_u) \Big )du  \Big |^2  \\
&+ \displaystyle \sup_{ t \in [0,T]} |J^\varepsilon_1(t)|^2    \textbf{1}_{\{ T < \tilde \tau^\varepsilon_{\rR(\varepsilon)}\}} + \displaystyle \sup_{t \in [0,T]} |J^\varepsilon_2(t)|^2   \textbf{1}_{\{ T < \tilde \tau^\varepsilon_{\rR(\varepsilon)}\}}  \Big ),
\end{align*}
where for any $\varepsilon>0$ we write
\begin{align*}
\begin{cases}
J^\varepsilon_1(t) & := \sqrt{\varepsilon} \displaystyle \int_0^t  \Big (\sigma(\xX^\varepsilon_s) - \sigma(\bar \xX^\varepsilon_s) \Big ) dB_1(s) \quad \text{and } \\
J^\varepsilon_2(t) &:=   \varepsilon \displaystyle \int_0^t \int_\XX \Big (c(\xX^\varepsilon_{s-},z) - c(\bar \xX^\varepsilon_{s-},z) \Big ) \tilde N^{\frac{1}{\varepsilon} \varphi^\varepsilon}(ds,dz).
\end{cases}
\end{align*}
\no Gronwall's lemma implies for any $\varepsilon>0$ that
\begin{align} \label{eq: Khasminkii-final2}
\displaystyle \sup_{-\tau \leq t \leq T} |\xX^\varepsilon(t) - \xX^\varepsilon(t)|^2 \textbf{1}_{\{ T < \tilde \tau^\varepsilon_{\rR(\varepsilon)}\}}& \leq e^{CT} \Big ( \displaystyle \sup_{0 \leq s \leq t} \Big | \int_0^s \Big ( a(\xX^\varepsilon_{u_\Delta}, \yY^\varepsilon_u) - \bar a(\xX^\varepsilon_u) \Big )du  \Big |^2 \textbf{1}_{\{ T < \tilde \tau^\varepsilon_{\rR(\varepsilon)}\}} \nonumber\\
& + \displaystyle \sup_{0 \leq t \leq T \wedge \tilde \tau^\varepsilon_{\rR(\varepsilon)}} |J_1^\varepsilon(t)|^2 + \displaystyle \sup_{0 \leq t \leq T \wedge \tilde \tau^\varepsilon_{\rR(\varepsilon)}} |J_2^\varepsilon(t)|^2 \Big ).
\end{align}
\no The estimate (\ref{eq: Khasminkii-final2}) yields for any $\delta>0$ 
\begin{align} \label{eq: Khasminkii-final3}
\bar \PP \Big ( \displaystyle \sup_{0 \leq t \leq \tilde \tau^\varepsilon_{\rR(\varepsilon)}} |\hat \xX^\varepsilon(t) - \bar \xX^\varepsilon(t)| > \frac{a(\varepsilon) \delta}{2} \Big ) & \leq \bar \PP \Big ( \displaystyle \sup_{0 \leq s \leq t} \Big | \int_0^s \Big ( a(\xX^\varepsilon_{u_\Delta}, \yY^\varepsilon(u)) - \bar a(\xX^\varepsilon_u) \Big )du  \Big |^2 \textbf{1}_{\{ T < \tilde \tau^\varepsilon_{\rR(\varepsilon)}\}}  > \frac{\delta^2 a^2(\varepsilon) e^{- 2 CT}}{12} \Big ) \nonumber \\
& + \bar \PP \Big ( \displaystyle \sup_{0 \leq t \leq T \wedge \tilde \tau^\varepsilon_{\rR(\varepsilon)}} |J_1^\varepsilon(t)|^2  > \frac{\delta^2 a^2(\varepsilon)  e^{- 2 CT}}{12} \Big ) \nonumber \\
&+ \bar \PP \Big (  \displaystyle \sup_{0 \leq t \leq T \wedge \tilde \tau^\varepsilon_{\rR(\varepsilon)}} |J_2^\varepsilon(t)|^2  \frac{\delta^2 a^2(\varepsilon)  e^{- 2 CT}}{12} \Big ).
\end{align}
\no Burkholder-Davis-Gundy's inequalities and the sublinear growth of $\sigma$ given by (\ref{eq: sublinear growth of the coefficients}) in Remark \ref{remark: sublinear growth} yield some constant $C_2=C_2(\delta, C_1,L_1, \Gamma_1, \Gamma_2)>0$, where $\Gamma_1, \Gamma_2$ are given by (\ref{eq: a priori bound controlled processes}) in Proposition \ref{proposition: a priori bound controlled processes} and respectively (\ref{eq: a priori bound averaged slow variable}) in Proposition \ref{proposition: a priori bound averaged  slow variable} such that
\begin{align}  \label{eq: Khasminkii-final3A}
 \bar \PP \Big ( \displaystyle \sup_{0 \leq t \leq T \wedge \tilde \tau^\varepsilon_{\rR(\varepsilon)}} |J_1^\varepsilon(t)|^2  > \frac{\delta^2 a^2(\varepsilon)  e^{- 2 CT}}{12} \Big ) &  \leq \frac{12  e^{2 CT}}{\delta^2 a^2(\varepsilon)} \bar \EE \Big [ \displaystyle \sup_{0 \leq t \leq T \wedge \tilde \tau^\varepsilon_{\rR(\varepsilon)}} |J_1^\varepsilon(t)|^2 \Big ] \nonumber \\
 & \leq C_2 \frac{\varepsilon}{a^2(\varepsilon)} \ra 0 , \quad \text{ as } \varepsilon \ra 0. 
\end{align}
\no Analogously, due to Burkholder-Davis-Gundy's inequalities and (\ref{eq: lemma integrability controls- estimate 1}) given in Lemma \ref{lemma: integrability controls} of Appendix-Subsection \ref{subsection: aux results MDP} there exists some constant $C_3=C_3(\delta, C_1, L_1, \Gamma_1, \Gamma_2,M)>0$, that  may change from line to line, such that
\begin{align}  \label{eq: Khasminkii-final3B}
 \bar \PP \Big ( \displaystyle \sup_{0 \leq t \leq T \wedge \tilde \tau^\varepsilon_{\rR(\varepsilon)}} |J_2^\varepsilon(t)|^2  > \frac{\delta^2 a^2(\varepsilon)  e^{- 2 CT}}{12} \Big ) &  \leq \frac{12  e^{ 2 CT}}{\delta^2 a^2(\varepsilon)} \bar \EE \Big [ \displaystyle \sup_{0 \leq t \leq T \wedge \tilde \tau^\varepsilon_{\rR(\varepsilon)}} |J_2^\varepsilon(t)|^2 \Big ] \nonumber \\
 & \leq \frac{\varepsilon}{a^2(\varepsilon)} C_3 \displaystyle \sup_{g \in \sS^M_{+,\varepsilon}} \int_0^T \int_{\XX} |z|^2 g(s,z) \nu(dz) ds \nonumber \\
 & \leq C_3 b(\varepsilon) (T+ a^2(\varepsilon)) \ra 0.
\end{align}
\no  We estimate now the first term in the right hand-side of (\ref{eq: Khasminkii-final3}). For every $\varepsilon>0$ and $t \in [0,T]$ we write $\bar \PP$-a.s. on the event $\{ T < \tilde \tau^\varepsilon_{\rR(\varepsilon)}\}$ 
 \begin{align} \label{eq: Khasminkii-final4}
 \int_0^t \Big ( a(\xX^\varepsilon_{s_\Delta}, \hat \yY^\varepsilon(s)) - \bar a(\xX^\varepsilon_s)) ds \Big ) &= \sum_{k=0}^{\left \lfloor{\frac{t}{\Delta}} \right \rfloor-1} \int_{k \Delta}^{(k+1)\Delta} \Big (a(\xX^\varepsilon_{k \Delta}, \yY^\varepsilon(s)) - \bar a(\xX^\varepsilon_{k \Delta}) \Big ) ds \nonumber \\
 & + \sum_{k=0}^{\left \lfloor{\frac{t}{\Delta}} \right \rfloor -1} \int_{k \Delta}^{(k+1) \Delta} \Big ( \bar a(\xX^\varepsilon_{k \Delta}) - \bar a(\xX^\varepsilon_s) \Big ) ds \nonumber \\
 &+ \int_{t_\Delta}^t \Big ( a(\xX^\varepsilon_{s_\Delta}, \hat \yY^\varepsilon(s)) - \bar a(\xX^\varepsilon_s) \Big ) ds \nonumber \\
 & := I^\varepsilon_1 + I^\varepsilon_2 + I^\varepsilon_3.
 \end{align}
 It follows from (\ref{eq: Khasminkii-final4}) that
 \begin{align} \label{eq: Khasminkii-final5}
  \bar \PP \Big ( \displaystyle \sup_{0 \leq s \leq t} \Big | \int_0^s \Big ( a(\xX^\varepsilon_{u_\Delta}, \yY^\varepsilon_u) - \bar a(\xX^\varepsilon_u) \Big )du  \Big |^2 \textbf{1}_{\{ T < \tilde \tau^\varepsilon_{\rR(\varepsilon)}\}}  > \frac{\delta^2 a^2(\varepsilon)  e^{- 2 CT}}{12} \Big ) & \leq  \bar \PP \Big ( \displaystyle \sup_{0 \leq t \leq T} |I^\varepsilon_1(t)| \textbf{1}_{\{T < \tilde \tau^\varepsilon_{\rR(\varepsilon)}\}} > \frac{\delta a(\varepsilon) e^{- CT}}{6 \sqrt{3} }\Big )  \nonumber \\
 & +  \bar \PP \Big ( \displaystyle \sup_{0 \leq t \leq T} |I^\varepsilon_2(t)| \textbf{1}_{\{T < \tilde \tau^\varepsilon_{\rR(\varepsilon)}\}} > \frac{\delta a(\varepsilon) e^{-  CT}}{6 \sqrt{3}   }\Big ) \nonumber \\
 &  + \bar \PP \Big ( \displaystyle \sup_{0 \leq t \leq T} |I^\varepsilon_3(t)| \textbf{1}_{\{T < \tilde \tau^\varepsilon_{\rR(\varepsilon)}\}} > \frac{\delta a(\varepsilon) e^{-  CT}}{6 \sqrt{3}}\Big ) .
\end{align}
\paragraph*{Estimating $I^\varepsilon_2$.} We observe that for any $\varepsilon>0$ 
\begin{align*}
I^\varepsilon_2 = \int_0^{t_\Delta} \Big ( \bar a(\xX^\varepsilon_{s_\Delta}) - \bar a(\xX^\varepsilon_s)\Big ) ds.
\end{align*}
Proposition \ref{proposition: bar a is Lipschitz continuous} and Lemma \ref{lemma: Khasminkii segment process estimate} implies for some $C_4=C(T)>0$, any $\delta>0$ and $\varepsilon>0$ small enough that
\begin{align} \label{eq: Khasminkii-final I2}
 \bar \PP \Big ( \displaystyle \sup_{t \in [0,T]} |I^\varepsilon_1(t)| \textbf{1}_{\{T < \tilde \tau^\varepsilon_{\rR(\varepsilon)}\}} > \frac{\delta a(\varepsilon) e^{-  CT}}{6 \sqrt{3 } }\Big )  & \leq \bar \PP \Big ( \displaystyle \sup_{0 \leq t \leq \tilde \tau^\varepsilon_{\rR(\varepsilon)}} \int_0^{t_\Delta} |\xX^\varepsilon_{s_\Delta}- \xX^\varepsilon_s| > C_4 a(\varepsilon)\Big ) \lesssim_\varepsilon \Xi (\varepsilon) \ra 0 \quad \text{as } \varepsilon \ra 0.
\end{align}

\paragraph*{Estimating $I^\varepsilon_3$.} Hypothesis \ref{condition: assumptions for existence uniqueness solution}, Proposition \ref{proposition: bar a is Lipschitz continuous} and Proposition \ref{proposition: a priori bound controlled processes} yield some constant $C_5=C_5(L,\Gamma_1(M))>0$ that may change from line to line such that, for every $\varepsilon>0$ small enough and any $\delta>0$, one has
\begin{align} \label{eq: Khasminkii-final I3}
\bar \PP \Big ( \displaystyle \sup_{t \in [0,T]} |I^\varepsilon_3(t)| \textbf{1}_{\{ T < \tilde \tau^\varepsilon_{\rR(\varepsilon)}\}} > \frac{\delta a(\varepsilon)e^{- CT}}{6 \sqrt{3} } \Big ) 
& \leq  \frac{C_5}{a^2(\varepsilon)} \bar \EE \Big [ \displaystyle \sup_{0 \leq t \leq \tilde \tau^\varepsilon_{\rR(\varepsilon)}} \Big | \int_{t_\Delta}^t \Big ( a(\xX^\varepsilon_{s_\Delta}, \hat \yY^\varepsilon(s)) - \bar a(\xX^\varepsilon_s) \Big ) ds \Big |^2 \Big ] \nonumber \\
& \leq \frac{C_5 \Delta(\varepsilon) }{a^2(\varepsilon)} \bar \EE \Big [ \int_0^T \Big ( 1+ ||\xX^\varepsilon_s||_\infty^2 + ||\xX^\varepsilon_{s_\Delta}||^2_\infty + |\yY^\varepsilon(s)|^2 \Big ) \textbf{1}_{\{ T < \tilde \tau^\varepsilon_{\rR(\varepsilon)}\}} ds \Big ]  \nonumber \\
& \lesssim_\varepsilon \frac{\Delta(\varepsilon)}{a^2(\varepsilon)} \ra 0, \quad \text{as } \varepsilon \ra 0,
\end{align}
due to (\ref{eq: parametrizations- the conv Delta}).

\paragraph*{Estimating $I^\varepsilon_1$.} 
\no We construct a new process $Z:= \yY^{\varepsilon} (\xX^\varepsilon_{k \Delta}, \yY^{\varepsilon}(k \Delta))$ where the notation that is displayed here stresses out that the process is the fast variable process $\yY^\varepsilon$ with frozen slow component $\xX^\varepsilon_{k \Delta}$ and initial condition $\yY^{\varepsilon}(k \Delta))$. It is a classical  fact in the course of the Khasminkii's technique employed in \cite{Khasminskii} for the proof of the strong averaging principle that 
for every $s \in [0, \Delta]$ we have
\begin{align*}
(\xX^\varepsilon_{k \Delta}, \yY^\varepsilon(s+k \Delta))=^d \Big ( \xX^\varepsilon_{k \Delta}, \yY^\varepsilon  ( \xX^\varepsilon_{k \Delta}, \yY^\varepsilon(k \Delta)  ) \Big ( \frac{s}{\varepsilon}\Big )\Big ).
\end{align*}
We may assume in addition that the fabricated noises above are independent of $\xX^\varepsilon_{k \Delta}$ and $\yY^\varepsilon(k \Delta)$. For the proof of the statements above we refer the reader to Section 5 in \cite{Xu}.
%\marginpar{\cred{give a proof of this in the report}}
\no Hence Proposition \ref{proposition: the averaging property for the averaged coefficient} together with the Markov property of $(X^\varepsilon_t, Y^\varepsilon(t))_{t \in [0,T]}$ implies for every $k=0,\dots, \left \lfloor{\frac{t}{\Delta}} \right \rfloor$ the following:
\begin{align} \label{eq: Khasminkii-final8}
\bar \EE \Big [ \Big | \int_{k \Delta}^{(k+1) \Delta} \Big (a(\xX^\varepsilon_{k \Delta}, \hat \yY^\varepsilon(s)) - \bar a(\xX^\varepsilon_{k \Delta}) \Big ) ds \Big | \Big ] 
& \leq  \Delta \bar \EE \Big [ \frac{\varepsilon}{\Delta} \Big | \int_0^{\frac{\Delta}{\varepsilon}} \Big ( a(\xX^\varepsilon_{k \Delta}, Z(s)) - \bar a(\xX^\varepsilon_{k \Delta}) \Big ) ds \Big | \Big ] \nonumber \\
& = \Delta \bar \EE \Big [ \bar \EE \Big [ \Big | \frac{\varepsilon}{\Delta} \int_0^{\frac{\Delta}{\varepsilon}} a(\zeta, Z^{\zeta,y}) - \bar a(\zeta) \Big | \Big | (\zeta,y)=(\xX^\varepsilon_{k \Delta}, \yY^\varepsilon(k \Delta))\Big ]\Big ] \nonumber \\
& \leq \Delta \alpha \Big ( \frac{\Delta}{\varepsilon}\Big ) \Big ( 1 + \bar \EE ||\xX^\varepsilon_{k \Delta}|| + \bar \EE [|\yY^\varepsilon(k \Delta)|]\Big ).
\end{align}
\no Proposition \ref{proposition: the averaging property for the averaged coefficient}, Proposition \ref{proposition: a priori bound controlled processes}, (\ref{eq: parametrizations- the conv Delta}) and (\ref{eq: Khasminkii-final8}) yield, for any $\delta>0$ and $\varepsilon>0$ sufficiently small, that
\begin{align} \label{eq: Khasminkii-final9}
\bar \PP \Big ( \displaystyle \sup_{0 \leq t \leq T} |I^\varepsilon_1| \textbf{1}_{\{ T < \tilde \tau^\varepsilon_{\rR(\varepsilon)}\}} > \frac{\delta a(\varepsilon) e^{- CT}}{6 \sqrt{3}}\Big ) 
& \lesssim_\varepsilon \frac{1}{a^2(\varepsilon)} \bar \EE \Big [ \displaystyle \sup_{0 \leq t \leq T} |I^\varepsilon_1(t)|^2 \textbf{1}_{\{ T < \tilde \tau^\varepsilon_{\rR(\varepsilon)}\}}\Big ] \nonumber \\
& \lesssim_\varepsilon \frac{1}{a^2(\varepsilon)} \sum_{k=0}^{ \left \lfloor{\frac{T}{\Delta(\varepsilon)}} \right \rfloor} \Big ( \bar \EE \Big | \int_{k \Delta}^{(k+1)\Delta} (a(\xX^\varepsilon_{k \Delta}, \hat \yY^\varepsilon(s)) - \bar a(\xX^\varepsilon_{k \Delta})) \textbf{1}_{\{ T < \tilde \tau^\varepsilon_{\rR(\varepsilon)}\}} ds \Big | \Big )^2  \nonumber\\
& \lesssim_\varepsilon \frac{\Delta(\varepsilon)}{a^2(\varepsilon)} \alpha \Big ( \frac{\Delta}{\varepsilon}\Big ) \ra 0 \text{ as } \varepsilon \ra 0.
\end{align}
The convergence above follows from the choice of the parametrization $\Delta= \Delta(\varepsilon)$ fixed in (\ref{eq: parametrization- the Delta}) and $\alpha$ constructed in Proposition \ref{proposition: the averaging property for the averaged coefficient}%, the following limits hold:

\end{proof}

\subsubsection{Proof of Theorem \ref{theorem: controlled averaging principle}} \label{section: proof averagin principle}
\no For any $\varepsilon>0$ fix $\rR(\varepsilon)>0$ such as in Proposition \ref{proposition: localization technique to the bounded case} and recall the definition of $\tilde \tau^\varepsilon_{\rR(\varepsilon)}$ in (\ref{eq: first exit time tilde X}). \\

\no For any $\delta>0$ we have
\begin{align} \label{eq: Khasminkii- the initial estimate}
\displaystyle \limsup_{\varepsilon \ra 0} \bar \PP \Big ( \displaystyle \sup_{0 \leq t \leq T} |\zZ^\varepsilon(t) - \bar \zZ^\varepsilon(t)| > \delta \Big ) & \leq \displaystyle \limsup_{\varepsilon \ra 0} \bar \PP \Big ( \displaystyle \sup_{0 \leq t \leq T} |\xX^\varepsilon(t) - \bar \xX^\varepsilon(t)| > \delta a(\varepsilon) \Big ) \nonumber \\
&\leq  \displaystyle \limsup_{\varepsilon \ra 0} \bar \PP \Big ( \displaystyle \sup_{0 \leq t \leq \tilde \tau^\varepsilon_{\rR(\varepsilon)}} |\xX^\varepsilon(t) - \hat \xX^\varepsilon(t)| > \frac{\delta a(\varepsilon)}{2}  \Big ) \nonumber \\
& + \displaystyle \limsup_{\varepsilon \ra 0} \bar \PP \Big ( \displaystyle \sup_{0 \leq \leq \tilde \tau^\varepsilon_{\rR(\varepsilon)}} |\hat \xX^\varepsilon(t) - \bar \xX^\varepsilon(t)| > \frac{\delta a(\varepsilon)}{2}  \Big ) \nonumber \\
&+ \displaystyle \limsup_{\varepsilon \ra 0} \bar \PP \Big ( \tilde \tau^\varepsilon_{\rR(\varepsilon)} \leq T \Big ) \nonumber \\
& =0,
\end{align}
due to Proposition \ref{proposition: localization technique to the bounded case}, Proposition \ref{prop: Khasminkii averaging controlled-part1} and Proposition \ref{prop: Khasminkii averaging controlled-part2}.
\begin{flushleft}
\qed
\end{flushleft}
\subsection{Conclusion}

\paragraph*{Conclusion-Proof of Theorem \ref{theorem: MDP for the slow component }}  \label{section: proof main result}
We recall the collection of measurable maps $(\gG^\varepsilon)_{\varepsilon>0}$ introduced in (\ref{eq: Ito maps for the MDP}) and $\gG^0$ defined by means of the skeleton equation (\ref{eq: the skeleton equation}). We note that Proposition \ref{prop: first condition MDP} reads as the Condition 1 of Hypothesis \ref{condition: condition for the uniform MDP} for $(\gG^\varepsilon)_{\varepsilon>0}$ and $\gG^0$. Proposition \ref{proposition: identification weak limit for the averaged deviation} combined with Theorem \ref{theorem: controlled averaging principle} yield, due to Slutzky's theorem, that Condition 2 of Hypothesis \ref{condition: condition for the uniform MDP} is verified for $(\gG^\varepsilon)_{\varepsilon>0}$ and $\gG^0$. Hence, the result follows from Theorem \ref{thm: sufficient cond MDP}.
\begin{flushleft}
\qed
\end{flushleft}
\section{Appendix}

\subsection{Auxiliary results for the derivation of the moderate deviations principle} \label{subsection: aux results MDP}
\subsubsection{Integrability properties of the controls} \label{subsubsection: integrability controls}
The following lemma is heavily used in the derivation of the moderate deviations principle stated in Theorem \ref{theorem: MDP for the slow component }. We refer the reader to Subsection \ref{section:ansatz} for notation.
\begin{lemma} \label{lemma: integrability controls}
 Fix $M>0$ and $\nu \in \MM$ a measure satisfying the Hypothesis \ref{condition: the measure}. The following holds.
\begin{itemize}
\item[1.] There exists $\tau>0$ such that for all $\varepsilon>0$ we have
 \begin{align}
&\displaystyle \sup_{g \in S^M_{+,\varepsilon}} \int_I \int_\XX |z|^2 g(s,z) \nu(dz) ds < \tau (a^2(\varepsilon) + |I|), \label{eq: lemma integrability controls- estimate 1} \\
&\displaystyle \sup_{g \in S^M_{+,\varepsilon}} \int_I \int_\XX |z| |g(s,z)-1| \nu(dz) ds  < \tau(a(\varepsilon)+ |I| )  \label{eq: lemma integrability controls- estimate 1.5}
\end{align}
and there exists $\tilde \tau>0$ yielding for all $\varepsilon, \beta >0$ some $c(\beta) \ra 0$ as $\beta \ra \infty$ and such that
\begin{align} \label{eq: lemma integrabilitu controls- estimate 2}
 \displaystyle \sup_{h \in S^M_{ \varepsilon}} \int_I \int_\XX |z| |h(s,z)| \nu(dz) ds < \tilde \tau (\sqrt{|I|} + |I| + a(\varepsilon) + c(\beta)),
\end{align}
for any Borel measurable $I \subset [0,T]$.
\item[2.] For every $\varepsilon>0$ let $\psi^\varepsilon \in \uU^M_{\varepsilon}$. We assume that for some $\beta \in (0,1)$ the following convergence in law holds, $\psi^\varepsilon \textbf{1}_{\{ |\psi^\varepsilon| < \frac{\beta}{a(\varepsilon)}\}} \Rightarrow \psi$ in the compact ball $B_2(\sqrt{M \kappa_2(1)})$, where $\kappa_2(1)$ is given by Remark \ref{remark: on automatic tightness of a weak compact collection in L2}. Then the following convergence in distribution  holds, for every $t \in [0,T]$,
\begin{align} \label{eq: lemma integrabilitu controls- dominated convergence}
\int_0^t \int_{\XX} |z| \psi^\varepsilon(s,z) \nu(dz) ds \ra \int_0^t \int_\XX |z|^r \psi(s,z) \nu(dz) ds.
\end{align}
\end{itemize} 
\end{lemma}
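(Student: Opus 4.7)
The plan is to reduce the deterministic bounds in part (1) to a single Legendre--Fenchel duality inequality and then to obtain the weak limit in part (2) by combining the resulting tail control with the hypothesis on weak $L^{2}$ convergence. The starting point is the elementary Young inequality
\begin{equation*}
ab \leq \frac{1}{\sigma}\bigl(e^{\sigma a}-1\bigr)+\frac{1}{\sigma}\,\ell(b),\qquad a,b\ge 0,\ \sigma>0,
\end{equation*}
which expresses the fact that $\ell$ is the convex conjugate of $y\mapsto e^{y}-1$. For (\ref{eq: lemma integrability controls- estimate 1}) I would take $a=|z|^{2}$, $b=g(s,z)$, integrate over $I\times\XX$, and pick $\sigma\in(0,\alpha)$ so that Hypothesis~\ref{condition: the measure} makes $\int_{\XX}(e^{\sigma|z|^{2}}-1)\nu(dz)$ finite; the entropy term is then controlled by $Ma^{2}(\varepsilon)/\sigma$ since $g\in S^{M}_{+,\varepsilon}$. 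Exactly the same inequality with $a=|z|$ handles the region $\{g\ge 1\}$ in (\ref{eq: lemma integrability controls- estimate 1.5}), while on $\{g<1\}$ I would use the trivial bound $|g-1|\le 1$ together with $\int_{\XX}(|z|\wedge 1)\nu(dz)<\infty$, which again follows from Hypothesis~\ref{condition: the measure}.

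For (\ref{eq: lemma integrabilitu controls- estimate 2}), which is the main technical step, I would split $h=(g-1)/a(\varepsilon)$ along the set $A_{\beta,\varepsilon}:=\{|h|\le\beta/a(\varepsilon)\}$ and its complement. On $A_{\beta,\varepsilon}$ a Cauchy--Schwarz argument in the spirit of Lemma~3.2 of \cite{BDG15} delivers the $\sqrt{|I|}+|I|+a(\varepsilon)$ contributions, since $|z|\mathbf{1}_{I}\in L^{2}(\nu_{T})$ thanks to the exponential moments of $\nu$ and $\|h\mathbf{1}_{A_{\beta,\varepsilon}}\|_{L^{2}(\nu_{T})}^{2}\lesssim M\kappa_{2}(1)$ by the same lemma. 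On $A_{\beta,\varepsilon}^{c}$ the entropy bound $L_{T}(g)\le Ma^{2}(\varepsilon)$ together with the superlinearity of $\ell$ at infinity forces a uniform $\varepsilon$-independent smallness that materializes as the function $c(\beta)\to 0$; quantitatively this arises from applying the Young inequality above with a parameter $\sigma_{\beta}$ tending to infinity with $\beta$, so that the corresponding exponential term is absorbed into a contribution of order $c(\beta)$.

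For part (2) I would decompose $\psi^{\varepsilon}=\psi^{\varepsilon}\mathbf{1}_{\{|\psi^{\varepsilon}|<\beta/a(\varepsilon)\}}+\psi^{\varepsilon}\mathbf{1}_{\{|\psi^{\varepsilon}|\ge\beta/a(\varepsilon)\}}$. The hypothesis provides weak convergence of the first summand inside the compact weak ball $B_{2}(\sqrt{M\kappa_{2}(1)})$; since $|z|\mathbf{1}_{[0,t]}\in L^{2}(\nu_{T})$ by Hypothesis~\ref{condition: the measure}, testing against this element yields the claimed convergence in distribution of the truncated integral. The tail contribution is bounded uniformly in $\varepsilon$ by a constant multiple of $c(\beta)$ through the restricted version of (\ref{eq: lemma integrabilitu controls- estimate 2}), so letting $\beta\to\infty$ after $\varepsilon\to 0$ concludes the argument. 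The main obstacle I anticipate is the sharp quantification of the tail on $A_{\beta,\varepsilon}^{c}$: one has to balance the exponential moment of $\nu$ against the logarithmic entropy penalty carried by $\ell$ in such a way that the resulting constant $c(\beta)$ is genuinely independent of $\varepsilon$ and vanishes as $\beta\to\infty$.
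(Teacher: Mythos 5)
Your treatment of~(\ref{eq: lemma integrability controls- estimate 1}) is sound and is in fact marginally cleaner than the paper's: using the scaled conjugate pair $ab\leq\sigma^{-1}(e^{\sigma a}-1)+\sigma^{-1}\ell(b)$ with $a=|z|^{2}$ works in a single stroke because $e^{\sigma|z|^{2}}-1\sim\sigma|z|^{2}$ near the origin, which is $\nu$-integrable, and $e^{\sigma|z|^{2}}\leq e^{\alpha|z|^{2}}$ for $|z|\geq 1$, so the split into small and large jumps the paper performs is not strictly needed here. Parts~(\ref{eq: lemma integrabilitu controls- estimate 2}) and~(\ref{eq: lemma integrabilitu controls- dominated convergence}) are also in the same spirit as the paper (Cauchy--Schwarz on the truncated region plus the superlinearity estimates $|x-1|\leq c_{1}(\beta)\ell(x)$ for $|x-1|\geq\beta$, and the reduction to testing a weakly convergent sequence against $|z|\mathbf{1}_{[0,t]}\in L^{2}(\nu_{T})$ as in Lemma~4.8 of \cite{BDG15}).

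However, your proposed proof of~(\ref{eq: lemma integrability controls- estimate 1.5}) has a genuine gap coming from the small--jump singularity of $\nu$. Hypothesis~\ref{condition: the measure} only guarantees $\int_{|z|\leq 1}|z|^{2}\,\nu(dz)<\infty$; it does \emph{not} imply $\int_{\XX}(|z|\wedge 1)\,\nu(dz)<\infty$, nor the stronger $\int_{|z|\leq 1}|z|\,\nu(dz)<\infty$. For instance $\nu(dz)=|z|^{-\alpha'-1}e^{-|z|^{2}}dz$ with $\alpha'\in(1,2)$ satisfies Hypothesis~\ref{condition: the measure} (and is in fact one of the paper's worked examples) but has $\int_{|z|\leq 1}|z|\,\nu(dz)=\infty$. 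Consequently, the trivial bound $|g-1|\leq 1$ on $\{g<1\}$ produces the non-integrable quantity $\int_{I}\int_{\XX}|z|\,\nu(dz)\,ds$, and the Young step with $a=|z|$ on $\{g\geq 1\}$ produces $\int_{\XX}(e^{\sigma|z|}-1)\,\nu(dz)$, which again behaves like $\int_{|z|\leq 1}|z|\,\nu(dz)$ near the origin and can diverge. The decomposition by $\{g\geq 1\}$ versus $\{g<1\}$ therefore cannot work as stated. The fix, as in the paper, is to decompose instead by $\{|g-1|\leq\beta\}$ versus $\{|g-1|>\beta\}$: on the first set Cauchy--Schwarz routes the estimate through $\int_{\XX}|z|^{2}\nu(dz)<\infty$, which tames the origin, and on the complement one further splits by $\{|z|\leq 1\}$ (where $|z|\leq 1$ kills the singularity and the entropy bound gives $c_{1}(\beta)Ma^{2}(\varepsilon)$) and $\{|z|>1\}$ (where the exponential tail $\int_{|z|>1}e^{\sigma|z|}\nu(dz)<\infty$ enters). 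A version of the same caveat applies to the $A_{\beta,\varepsilon}^{c}$ contribution in your argument for~(\ref{eq: lemma integrabilitu controls- estimate 2}): you cannot apply the Young inequality there with $a=|z|$ directly; the secondary split by $|z|\lessgtr 1$ is unavoidable.
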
  
For the proof of the first statement we refer the reader to Lemma 2.1 in \cite{OliveiraHogele}. The conclusion of the second statement is proved as in Lemma 4.8 of \cite{BDG15}.

\subsection{Auxiliary estimates for the controlled averaging principle}

\subsubsection{Proof of Lemma \ref{lemma: Khasminkii segment process estimate} } \label{subsubsection: segment process}
\no For any $\varepsilon>0$ we fix $\Delta:= \Delta (\varepsilon)$ given by (\ref{eq: parametrization- the Delta}), $a(\varepsilon)$ given in (\ref{eq:parametrization- the speed a}) and $\rR(\varepsilon)>0$ such as in Proposition \ref{proposition: localization technique to the bounded case}. We recall that due to Proposition \ref{proposition: a priori bound controlled processes} we have for any $\varepsilon>0$ small enough that
\begin{align} \label{eq: Khasminkii lemma1-apriori biund revisited}
\displaystyle \sup_{0 < \varepsilon < \varepsilon_0} \bar \EE \Big [ \displaystyle \sup_{0 \leq t \leq \tilde \tau^\varepsilon_{\rR(\varepsilon)}} ||\xX^\varepsilon_t||_\infty \Big ] < \infty,
\end{align}
where $\tilde \tau^\varepsilon_{\rR(\varepsilon)}$ is the $\bar \FF$-stopping time defined by (\ref{eq: first exit time tilde X}).\\

\no  Let us work on the event $\{ T < \tilde \tau^\varepsilon_{\rR(\varepsilon)}\}$. Fix $\varepsilon>0$, $t \in [0,T]$ and  $t_\Delta:= \left \lfloor{\frac{t}{\Delta}} \right \rfloor \Delta$.
For every $\varepsilon>0$ let $K_\varepsilon:=\left \lfloor{\frac{T}{\Delta(\varepsilon)}} \right \rfloor \in \NN $ and $N_\varepsilon:= \left \lfloor{\frac{\tau}{\Delta(\varepsilon)}} \right \rfloor \in \NN$. For any $k=0,\dots, K_\varepsilon-1$ and $m=0,\dots, N_\varepsilon-1$ we label $I^\varepsilon_k:= [k \Delta; (k+1) \Delta]$ and $J^\varepsilon_m:= [-(m+1)\Delta, -m \Delta]$. \\

\no Given $t \in [0,T]$ and $\theta \in [-\tau,0]$ let $k,m \geq 0$ such that $t \in [k \Delta, (k+1) \Delta]$ and $\theta \in [-(m+1) \Delta, -m \Delta]$. It is immediate that
\begin{align*}
t + \theta \in [(k-m-1)\Delta, (k+1-m) \Delta] \quad \text{and } t_\Delta + \theta \in [(k-m-1)\Delta, (k-m)\Delta].
\end{align*}
\no We have to distinguish three possible cases:
\begin{itemize}
\item[(i)] $m \leq k-1$;
\item[(ii)] $m \geq k+1$ and
\item[(iii)] $m=k$.
\end{itemize}
 
\no It follows that
\begin{align*}
\displaystyle \sup_{0 \leq t \leq T} ||\xX^\varepsilon_t - \xX^\varepsilon_{t_\Delta}|| &= \displaystyle \sup_{0 \leq t \leq T} \displaystyle \sup_{-\tau \leq \theta \leq 0} |\xX^\varepsilon(t+\theta)- \xX^\varepsilon(t_\Delta+\theta)| \\
&= \displaystyle \sup_{t \in \displaystyle \cup_{k=0}^{K_\varepsilon-1} I^\varepsilon_k} \quad \displaystyle \sup_{\theta \in \displaystyle \cup_{m=0}^{N_\varepsilon-1} J^\varepsilon_k} |\xX^\varepsilon(t+\theta)- \xX^\varepsilon(t_\Delta+\theta)|.
\end{align*}
\no Let us fix $(g_\varepsilon)_{\varepsilon>0}$ such that $g_\varepsilon \simeq_\varepsilon a(\varepsilon)$ as $\varepsilon \ra 0$. It follows that 
\begin{align*}
\bar \PP \Big ( \displaystyle \sup_{0\leq t \leq T} |\xX^\varepsilon_t - \xX^\varepsilon_{t_\Delta}| > g_\varepsilon; T < \tilde \tau^\varepsilon_{\rR(\varepsilon)} \Big ) &\leq N_\varepsilon K_\varepsilon \displaystyle \max_{\substack{k=0,\dots, K_\varepsilon-1 \\ m=0, \dots, N_\varepsilon-1}}  \bar \PP \Big ( \displaystyle \sup_{ \substack{k \Delta \leq t \leq (k+1) \Delta \\ -(m+1) \Delta \leq \theta \leq -m\Delta}} |\xX^\varepsilon(t+\theta)- \xX^\varepsilon(t_\Delta+\theta)|  > g_\varepsilon; T < \tilde \tau^\varepsilon_{\rR(\varepsilon)} \Big ) \\
&:= K_\varepsilon N_\varepsilon \Big ( p_1^\varepsilon + p_2^\varepsilon+ p_3^\varepsilon \Big ),
\end{align*}
where
\begin{align*}
\begin{cases}
p_1^\varepsilon &:=   \bar \PP \Big ( \displaystyle \sup_{ \substack{k \Delta \leq t \leq (k+1) \Delta \\ -(m+1) \Delta \leq \theta \leq -m\Delta}} |\xX^\varepsilon(t+\theta)- \xX^\varepsilon(t_\Delta+\theta)|  > g_\varepsilon; m \leq k-1; T < \tilde \tau^\varepsilon_{\rR(\varepsilon)} \Big ) \\
 p^\varepsilon_2&:= \bar \PP \Big ( \displaystyle \sup_{ \substack{k \Delta \leq t \leq (k+1) \Delta \\ -(m+1) \Delta \leq \theta \leq -m\Delta}} |\xX^\varepsilon(t+\theta)- \xX^\varepsilon(t_\Delta+\theta)|  > g_\varepsilon; m\geq k+1; T < \tilde \tau^\varepsilon_{\rR(\varepsilon)} \Big ) \text{ and } \\
 p^\varepsilon_3 &:=   \bar \PP \Big ( \displaystyle \sup_{ \substack{k \Delta \leq t \leq (k+1) \Delta \\ -(m+1) \Delta \leq \theta \leq -m\Delta}} |\xX^\varepsilon(t+\theta)- \xX^\varepsilon(t_\Delta+\theta)|  > g_\varepsilon; m=k; T < \tilde \tau^\varepsilon_{\rR(\varepsilon)} \Big ).
\end{cases}
\end{align*}
\paragraph*{Case (i): $m \leq k-1$.} In this case we have that $t + \theta >0$ and $t_\Delta + \theta>0$. Then we have that
\begin{align*}
\xX^\varepsilon(t+\theta)- \xX^\varepsilon(t_\Delta+\theta) &= \int_{t_\Delta+\theta}^{t+\theta} \Big ( a(\xX^\varepsilon_s, \yY^\varepsilon(s)) + \sigma(\xX^\varepsilon_s) \xi^\varepsilon_1(s) + \int_\XX c(\xX^\varepsilon_s,z) (\varphi^\varepsilon(s,z)-1) \nu(dz) \Big ) ds \\
&+ \sqrt{\varepsilon} \int_{t_\Delta+\theta}^{t+\theta} \sigma(\xX^\varepsilon_s)dB_1(s) + \varepsilon \int_{t_\Delta+\theta}^{t+\theta} c(\xX^\varepsilon_{s-},z) \tilde N^{\frac{1}{\varepsilon} \varphi^\varepsilon}(ds,dz).
\end{align*} 
Let us fix the parametrization $L=L_\varepsilon>0$, given by
\begin{align} \label{eq: parametrizations: L}
 L=L_\varepsilon>0:= \frac{a^2(\varepsilon)}{|\ln \varepsilon|^q} \text{ for some } q > 2 \gamma +3, \quad \varepsilon>0.
\end{align}
\no The Bernstein inequality given in the form of Theorem 3.3. in \cite{DZ01} implies for every $\varepsilon>0$ that
\begin{align*}
p^\varepsilon_1 \lesssim_\varepsilon e^{-\frac{g^2_\varepsilon}{L_\varepsilon}} + \PP \Big ( [\xX^\varepsilon - \xX^\varepsilon(t_\Delta+\theta)]_{(k+1)\Delta- m \Delta} > L_\varepsilon; m \leq k-1; T < \tilde \tau^\varepsilon_{\rR(\varepsilon)} \Big ).
\end{align*}
\no  Due to (\ref{eq: Khasminkii lemma1-apriori biund revisited}) it follows for any $\varepsilon>0$ on the event $\{ T < \tilde \tau^\varepsilon_{\rR(\varepsilon)}\}$ that 
\begin{align*}
[\xX^\varepsilon - \xX^\varepsilon(t_\Delta+\theta)]_{(k+1)\Delta- m \Delta} &\lesssim_\varepsilon \varepsilon \Delta(1+ \rR^2(\varepsilon))+ \varepsilon^2 \int_{t_\Delta-(m+1)\Delta}^{(k+1)\Delta - m \Delta} |z|^2 N^{\frac{1}{\varepsilon}}(ds,dz) \\
& := \varepsilon(1+ \rR^2(\varepsilon)) \Delta + \varepsilon^2 I^\varepsilon_{(k+1)\Delta - m \Delta}.
\end{align*}
Due to the choice of $L_\varepsilon$ in (\ref{eq: parametrizations: L}) and $\Delta(\varepsilon)$ in (\ref{eq: parametrization- the Delta}) let $\varepsilon_0>0$ sufficiently small such that for any $\varepsilon< \varepsilon_0$ we have  $\varepsilon(1+ \rR^2(\varepsilon)) \varepsilon^\gamma |\ln \varepsilon|^{p-q} < \frac{1}{2}$. Then it follows that
\begin{align} \label{eq: estimative of Lepsilon}
\varepsilon (1 + \rR^2(\varepsilon)) \Delta = \varepsilon(1 + \rR^2(\varepsilon)) \varepsilon^\gamma |\ln \varepsilon|^{p-q} \frac{a^2(\varepsilon)}{|\ln \varepsilon|^q} < \frac{L_\varepsilon}{2}
\end{align}
for every $\varepsilon< \varepsilon_0$.

\no The estimate (\ref{eq: lemma integrability controls- estimate 1}) in Lemma \ref{lemma: integrability controls} (Subsection \ref{subsection: aux results MDP} of the Appendix) implies for any $\varepsilon>0$ small enough such that (\ref{eq: estimative of Lepsilon}) holds that
\begin{align*}
 \PP \Big ( [\xX^\varepsilon - \xX^\varepsilon(t_\Delta+\theta)]_{(k+1)\Delta- m \Delta} > L_\varepsilon; m \leq k-1; T < \tilde \tau^\varepsilon_{\rR(\varepsilon)} \Big )& \leq \bar \PP \Big ( \varepsilon^2 I^\varepsilon_{(k+1)\Delta - m \Delta} > \frac{L_\varepsilon}{2} \Big ) \\
 & \lesssim_\varepsilon \frac{\varepsilon^2}{2 L_\varepsilon} \bar \EE \Big [ I^\varepsilon_{(k+1)\Delta - m\Delta} \Big ] \\
 & \lesssim_\varepsilon \frac{\varepsilon}{L\varepsilon} \int_{t_\Delta  - (m+1)\Delta}^{(k+1)\Delta - m\Delta} |z|^2 \varphi^\varepsilon(s,z) \nu(dz) ds \\
 & \lesssim_\varepsilon \frac{\varepsilon}{L_\varepsilon} \Big ( a^2(\varepsilon) + \Delta \Big ).
\end{align*}
\no Due to (\ref{eq: parametrizations- the conv Delta}), (\ref{eq: parametrizations: L}), and $a(\varepsilon) \ra 0$ as $\varepsilon \ra 0$ we conclude for every $\varepsilon>0$ small enough that 
\begin{align} \label{eq: Khasminkii lemma1-p1}
p^\varepsilon_1 \lesssim_\varepsilon e^{- \frac{g^2_\varepsilon}{L_\varepsilon}} + \frac{\varepsilon}{L_\varepsilon} \Big ( a^2(\varepsilon) + \Delta (\varepsilon) \Big ) \ra 0 \quad \text{ as } \varepsilon \ra 0.
\end{align}

\paragraph*{The case $m \geq k+1$}.  In this case we have that $t + \theta <0$ and $t_\Delta + \theta<0$. Since the initial delay $\chi$ is Lipschitz continuous (cf. (\ref{eq: initial delay is Lipschitz})) it follows that
\begin{align*}
|\xX^\varepsilon(t+\theta) - \xX^\varepsilon(t_\Delta+\theta)| 
&= |\chi(t+\theta) - \chi(t_\Delta+\theta)|\\
&\leq \lambda |t-t_\Delta|.
\end{align*}
\no Then, for any $\varepsilon>0$ we have 
\begin{align} \label{eq: Khasminkki lemma1-p2}
p^\varepsilon_2 &=\bar \PP \Big ( \displaystyle \sup_{ \substack{k \Delta \leq t \leq (k+1) \Delta \\ -(m+1) \Delta \leq \theta \leq -m\Delta}} |\xX^\varepsilon(t+\theta)- \xX^\varepsilon(t_\Delta+\theta)|^4  > (g_\varepsilon)^4; m\geq k+1; T < \tilde \tau^\varepsilon_{\rR(\varepsilon)} \Big ) \nonumber\\
 &\lesssim_\varepsilon \frac{1}{(g_\varepsilon)^4} \bar \EE \Big [ \sup_{ \substack{k \Delta \leq t \leq (k+1) \Delta \\ -(m+1) \Delta \leq \theta \leq -m\Delta}} |\xX^\varepsilon(t+\theta)- \xX^\varepsilon(t_\Delta+\theta)|^4 \Big ] \lesssim_\varepsilon \Big (\frac{\Delta(\varepsilon)}{g_\varepsilon} \Big )^4 \ra 0 \quad \text{ as } \varepsilon \ra 0,
\end{align}
 due to the definition of $\Delta(\varepsilon)$ in (\ref{eq: parametrization- the Delta}) and $g_\varepsilon \simeq_\varepsilon a(\varepsilon)$ as $\varepsilon \ra 0$. 
\paragraph*{The case $m=k$.} In this case we have $t + \theta \in [-\Delta, \Delta]$ and $t_\Delta +\theta \in [-\Delta, 0]$. It is immediate that
\begin{align*}
|\xX^\varepsilon(t+\theta) - \xX^\varepsilon(t_\Delta+\theta)| = |\xX^\varepsilon(t+\theta) - \xX^\varepsilon(t_\Delta+\theta)|\textbf{1}_{\{ t + \theta>0 \}} + |\xX^\varepsilon(t+\theta) - \xX^\varepsilon(t_\Delta+\theta)| \textbf{1}_{\{ t+\theta<0 \}}.
\end{align*}
Due to the two previous cases already analysed we have, for any $\varepsilon>0$ small enough ,that
\begin{align} \label{eq: Khasminkiii lemma1-p3}
p^\varepsilon_3 & \leq  \bar \PP \Big (\displaystyle \sup_{ \substack{k \Delta \leq t \leq (k+1) \Delta \\ -(k+1) \Delta \leq \theta \leq -k \Delta}} |\xX^\varepsilon(t+\theta)- \xX^\varepsilon(t_\Delta+\theta)|  > g_\varepsilon; \textbf{1}_{\{ t + \theta>0 \}}; T < \tilde \tau^\varepsilon_{\rR(\varepsilon)}  \Big ) \nonumber \\
& + \bar \PP \Big (\displaystyle \sup_{ \substack{k \Delta \leq t \leq (k+1) \Delta \\ -(k+1) \Delta \leq \theta \leq -k \Delta}} |\xX^\varepsilon(t+\theta)- \xX^\varepsilon(t_\Delta+\theta)|  > g_\varepsilon; \textbf{1}_{\{ t + \theta<0 \}}; T < \tilde \tau^\varepsilon_{\rR(\varepsilon)}  \Big ) \nonumber  \\
& \lesssim_\varepsilon e^{- \frac{g^2_\varepsilon}{L_\varepsilon}} + \frac{\varepsilon}{L_\varepsilon} \Big ( a^2(\varepsilon) + \Delta \Big ) +  \Big ( \frac{\Delta}{g_\varepsilon} \Big )^4 \ra 0 \quad \text{ as } \varepsilon \ra 0.
\end{align}
Combining (\ref{eq: Khasminkii lemma1-p1})-(\ref{eq: Khasminkiii lemma1-p3}) it follows, for $\Delta(\varepsilon)$, $L_\varepsilon$ given by (\ref{eq: parametrization- the Delta}) and respectively (\ref{eq: parametrizations: L}) and any $\varepsilon>0$ small enough, that %\marginpar{\cred{here}}
\begin{align*}
&\bar \PP \Big ( \displaystyle \sup_{0\leq t \leq T} |\xX^\varepsilon_t - \xX^\varepsilon_{t_\Delta}| > g_\varepsilon; T < \tilde \tau^\varepsilon_{\rR(\varepsilon)} \Big ) \\
& \lesssim_\varepsilon
N_\varepsilon K_\varepsilon \Big ( e^{- \frac{g^2_\varepsilon}{L_\varepsilon}} + \frac{\varepsilon}{L_\varepsilon} \Big ( a^2(\varepsilon) + \Delta \Big ) +  \Big (\frac{\Delta}{g_\varepsilon} \Big )^4 \Big ) \\
& \lesssim_\varepsilon \frac{1}{(\Delta(\varepsilon))^2} \Big ( e^{- |\ln \varepsilon|^q} + \varepsilon |\ln \varepsilon|^q + b(\varepsilon) |\ln \varepsilon|^q \Delta + \Big ( \frac{\Delta(\varepsilon)}{a(\varepsilon)} \Big )^4 \Big ) \\
& \lesssim_\varepsilon \frac{\varepsilon^q}{\varepsilon^{2 \gamma} a^4(\varepsilon) |\ln \varepsilon|^{2p}} + \frac{\varepsilon}{\varepsilon^{2 \gamma} a^4(\varepsilon) |\ln \varepsilon|^{2p-q}} + \frac{\varepsilon}{\varepsilon^\gamma a^4(\varepsilon) |\ln \varepsilon|^{p-q}} + a^2(\varepsilon) \varepsilon^{2 \gamma} |\ln \varepsilon|^{2p} \\
& \lesssim_\varepsilon   b^2(\varepsilon) \frac{\varepsilon}{|\ln \varepsilon|^{2p}} + \frac{\varepsilon^{2 \theta-1 - 2 \gamma}}{|\ln \varepsilon|^{2p -q}} + \frac{\varepsilon^{ 2 \theta - 1 - \gamma}}{|\ln \varepsilon|^{p-q}} + \varepsilon^\gamma \varepsilon^{1- \theta} |\ln \varepsilon|^{2 p} =: \Xi(\varepsilon). 
\end{align*}
\no Since $\gamma \in \Big ( 0, \theta - \frac{1}{2} \Big )$, $a(\varepsilon)= \varepsilon^{\frac{1-\theta}{2}}$, $\theta \in \Big ( \frac{1}{2}, 1 \Big )$, $b(\varepsilon) = \frac{\varepsilon}{a^2(\varepsilon)}$ we conclude that $\Xi(\varepsilon) \ra 0$ as $\varepsilon \ra 0$. This finishes the proof.

\begin{flushright}
\qed
\end{flushright}
\subsubsection{Proof of Lemma \ref{lemma: Khasminkki} } \label{subsubsection: Khasminkii}

\no Ito's formula yields for any $t \in [t_\Delta, t_\Delta+1]$ and $\bar \PP$-a.s.
\begin{align*}
&|\hat \yY^\varepsilon(t) - \yY^\varepsilon(t)|^2 \\
& = \frac{2}{\varepsilon} \int_{t_\Delta}^t \langle f(\xX^\varepsilon_{t_\Delta}, \hat \yY^\varepsilon(s)) - f(\xX^\varepsilon_s, \yY^\varepsilon(s)), \hat \yY^\varepsilon(s)- \yY^\varepsilon(s) \rangle ds \\
& + \frac{2}{\varepsilon} \int_{t_\Delta}^t \langle (g(\xX^\varepsilon_{t_\Delta},\hat  \yY^\varepsilon(s))- g(\xX^\varepsilon_s, \yY^\varepsilon(s)) )\xi^\varepsilon_2(s), \hat \yY^\varepsilon(s)- \yY^\varepsilon(s) \rangle ds \\
&+ \frac{2}{\varepsilon} \int_t^{t_\Delta} \int_\XX \langle h(\xX^\varepsilon_{t_\Delta}, \hat \yY^\varepsilon(s),z)- h(\xX^\varepsilon_s, \yY^\varepsilon(s),z), \hat \yY^\varepsilon(s)- \yY^\varepsilon(s)\rangle (\varphi^\varepsilon(s,z)-1) \nu(dz) ds \\
&+ \frac{2}{\sqrt{\varepsilon}} \int_{t_\Delta}^t \langle g(\xX^\varepsilon_{t_\Delta}, \hat \yY^\varepsilon(s))- g(\xX^\varepsilon_s, \yY^\varepsilon(s)), (\hat \yY^\varepsilon(s)- \yY^\varepsilon(s)) dB^2(s)\rangle \\
& + \frac{1}{\varepsilon} \int_{t_\Delta}^t |g(\xX^\varepsilon_{t_\Delta}, \hat \yY^\varepsilon(s)) - g(\xX^\varepsilon_s, \yY^\varepsilon(s))|^2 ds \\
&+ \int_{t_\Delta}^t \int_\XX 2 \langle h(\xX^\varepsilon_{t_\Delta-}, \hat \yY^\varepsilon_{s-}, z)- h(\xX^\varepsilon_{s-}, \yY^\varepsilon_{s-},z), \yY^\varepsilon_{s-}- \yY^\varepsilon_{s-} \rangle  \tilde N^{\frac{1}{\varepsilon} \varphi^\varepsilon}(ds,dz)  \\
&+  \int_{t_\Delta}^t \int_\XX |h(\xX^\varepsilon_{t_\Delta-}, \hat yY^\varepsilon_{s-}, z)- h(\xX^\varepsilon_{s-}, \yY^\varepsilon_{s-},z), \hat \yY^\varepsilon_{s-}- \yY^\varepsilon_{s-}|^2 \tilde N^{\frac{1}{\varepsilon} \varphi^\varepsilon}(ds,dz) \\
& +\frac{1}{\varepsilon} \int_{t_\Delta}^t \int_\XX |h(\xX^\varepsilon_{t_\Delta}, \hat yY^\varepsilon(s))- h(\xX^\varepsilon_s, \yY^\varepsilon(s))|^2 \varphi^\varepsilon(s,z)\nu(dz)ds \\
&= \sum_{i=1}^8 I^\varepsilon(t).
\end{align*}

\no Using (\ref{eq: dissipativity- the last one required }) in Hypothesis \ref{condition: dissipativity} yields for any $\varepsilon>0$ and $t \in [t_\Delta, t_\Delta+1]$ 
\begin{align}\label{eq: Khasminkii 2nd estimate-countdown3}
I^\varepsilon_1(t) \leq - \frac{2 \beta_1}{\varepsilon} \int_{t_\Delta}^t |\hat \yY^\varepsilon(s)- \yY^\varepsilon(s)|^2  ds + \frac{2 \beta_2 \Delta}{\varepsilon} ||\xX^\varepsilon_{t_\Delta} - \xX^\varepsilon_{t}||^2_\infty.
\end{align}
\no The boundedness of $g$ given by (\ref{eq: g and h bounded}) in Hypothesis \ref{condition: dissipativity}, the fact that $\xi^\varepsilon \in \tilde \uU^M_\varepsilon$ and  Cauchy-Schwartz's inequality imply for any $\varepsilon>0$ and $t \in [t_\Delta, t_\Delta+1]$ that
\begin{align}\label{eq: Khasminkii 2nd estimate-countdown4}
I^\varepsilon_2(t)  \leq \frac{4 \Lambda \sqrt{M} a(\varepsilon)}{\varepsilon} \Big ( 1 + \int_{t_\Delta}^t   |\hat \yY^\varepsilon(s)- \yY^\varepsilon(s)|^2 ds \Big ).
\end{align}
\no Analogously, (\ref{eq: dissipativity- the last one required }) in Hypothesis \ref{condition: dissipativity} together with (\ref{eq: lemma integrability controls- estimate 1}), (\ref{eq: lemma integrability controls- estimate 1.5}) given in Lemma \ref{lemma: integrability controls} of Subsection \ref{subsection: aux results MDP} of the Appendix combined with the numeric fact  $x \lambda \leq x^2+\frac{1}{\lambda}$, $x, \lambda \geq 0$ %\marginpar{\textbf{CORRECT}}  
yield some $C_1=C_1(M, \Lambda)>0$ such that for any $\varepsilon>0$ and $t \in [t_\Delta, t_\Delta+1]$ we have
\begin{align}\label{eq: Khasminkii 2nd estimate-countdown5}
 I^\varepsilon_3(t) &\leq \frac{C_1}{\varepsilon \lambda } (a(\varepsilon) + \Delta) +\frac{C_1 \lambda}{\varepsilon} \int_{t_\Delta}^t    |\hat \yY^\varepsilon(s)- \yY^\varepsilon(s)|^2 \Theta^\varepsilon(s)ds \quad \text{ and } \nonumber \\
I^\varepsilon_5(t) + I^\varepsilon_8(t) & \leq \frac{C}{\varepsilon} \Big ( a^2(\varepsilon) + \Delta \Big )
\end{align} 
where $\Theta^\varepsilon(t):= \int_0^t |z||\varphi^\varepsilon(s,z)-1| \nu(dz), \quad t \in [0,T]$. \\
The estimates (\ref{eq: Khasminkii 2nd estimate-countdown3})-(\ref{eq: Khasminkii 2nd estimate-countdown5}) imply for $t \in [t_\Delta, t_\Delta+1]$, $\varepsilon>0$ and $\lambda= \lambda(\varepsilon)>0$ fixed below the following $\bar \PP$-a.s. bound on the event $\{ T < \tilde \tau^\varepsilon_{\rR(\varepsilon)} \}$: 
\begin{align*}
|\hat \yY^\varepsilon(t) - \yY^\varepsilon(t)|^2  \lesssim_\varepsilon \int_{t_\Delta}^t \frac{1}{\varepsilon} \Big (-1 + a(\varepsilon) + \lambda(\varepsilon) \Theta^\varepsilon(s) \Big ) |\hat \yY^\varepsilon(s) - \yY^\varepsilon(s)|^2 ds + C_2(\varepsilon) + I^\varepsilon_4(t) + I^\varepsilon_6(t) + I^\varepsilon_7(t),
\end{align*}
where 
\begin{align} \label{eq: Khamsminskii C2}
C_2(\varepsilon) \simeq_\varepsilon \frac{1}{\varepsilon} \Big ( \Delta(\varepsilon) \rR^2(\varepsilon)+ a(\varepsilon) + \frac{a(\varepsilon)}{\lambda(\varepsilon)} (1 + \Delta(\varepsilon)) + a^2(\varepsilon) + \Delta(\varepsilon) \Big )\quad \text{
 as }\varepsilon \ra 0.
\end{align} 
 
 \no Due to Gronwall's lemma, the estimate (\ref{eq: lemma integrability controls- estimate 1.5}) in Lemma \ref{lemma: integrability controls} (Subsection \ref{subsection: aux results MDP} of the Appendix) and the fact that $\bar \EE[I^\varepsilon_4]= \bar \EE[I^\varepsilon_6]=\bar \EE[I^\varepsilon_7]=0$ it follows, for any $\varepsilon>0$, $\lambda= \lambda(\varepsilon)=\varepsilon$ and $t \in [t_\Delta, t_\Delta+1]$ that
 \begin{align*}
 \bar \EE \Big [ \Big |\hat \yY^\varepsilon(t) - \yY^\varepsilon(t)|^2 \textbf{1}_{\{ T < \tilde \tau^\varepsilon_{\rR(\varepsilon)} \}}] \lesssim_\varepsilon C_2(\varepsilon) \exp \Big ( \frac{-\Delta(\varepsilon)}{\varepsilon} (1 - a(\varepsilon)) + a(\varepsilon) + \Delta(\varepsilon) \Big ).
 \end{align*}
 \no Let $\varepsilon_0>0$ small enough such that $1 - a(\varepsilon)> \frac{1}{2}$ and $a(\varepsilon) + \Delta(\varepsilon) < 1$ for any $\varepsilon< \varepsilon_0$. Therefore we have for any $\varepsilon>0$ small enough and $t \in [0,T]$ that
 \begin{align} \label{eq: Khasminkii C2 conclusion}
 \bar \EE \Big [ \Big |\hat \yY^\varepsilon(t) - \yY^\varepsilon(t)|^2 \textbf{1}_{\{ T < \tilde \tau^\varepsilon_{\rR(\varepsilon)} \}}] \lesssim_\varepsilon \frac{C_2(\varepsilon)}{\Delta(\varepsilon)} e^{- \frac{\Delta(\varepsilon)}{2\varepsilon}+1}  \ra 0 \quad \text{ as } \varepsilon \ra 0,
 \end{align}
 due to the choice of $\Delta (\varepsilon)$ fixed in (\ref{eq: parametrization- the Delta}).

\begin{flushright}
\qed
\end{flushright}

 \paragraph*{Acknowledgments.} The authors acknowledge and thank the financial support from the FAPESP grant number 2018/06531-1 at the University of Campinas (UNICAMP), SP-Brazil.

\end{document}